\documentclass[12pt,a4paper]{amsart}
\usepackage{bbm,graphicx}
\usepackage[square,comma]{natbib}
\setlength{\oddsidemargin}{-1in}
\setlength{\evensidemargin}{-1in}
\setlength{\topmargin}{-1in}
\addtolength{\oddsidemargin}{2.5cm}
\addtolength{\evensidemargin}{2.5cm}
\addtolength{\topmargin}{2.5cm}
\setlength{\textwidth}{16cm}
\setlength{\textheight}{23.7cm}
\setlength{\footskip}{1cm}

\title{Random Normal Matrices and Polynomial Curves}
\author{Peter Elbau}
\thanks{This work emerged from my PhD studies under the supervision of Prof. Giovanni Felder at the ETH Zurich}

\newtheorem{proposition}{Proposition}[section]
\newtheorem{corollary}[proposition]{Corollary}
\newtheorem{lemma}[proposition]{Lemma}
\newtheorem{theorem}[proposition]{Theorem}
\newtheorem{definition}[proposition]{Definition}
\newtheorem{conjecture}[proposition]{Conjecture}
\newcommand{\D}{\displaystyle}
\newcommand{\C}{\mathbbm C}
\newcommand{\R}{\mathbbm R}
\newcommand{\Z}{\mathbbm Z}
\newcommand{\N}{\mathbbm N}
\newcommand{\U}{\mathrm U}
\renewcommand{\u}{\mathfrak u}
\newcommand{\e}{\mathrm e}
\renewcommand{\i}{\mathrm i}

\renewcommand{\vec}[1]{\underline{#1}}
\newcommand{\ord}{\mathrm o}
\newcommand{\Ord}{\mathcal O}
\newcommand{\I}{^{-1}}
\newcommand{\0}{^{(0)}}
\renewcommand{\d}{\,\mathrm d}
\newcommand{\dd}{\,\mathrm d^2}
\newcommand{\ddd}[1]{\,\mathrm d^{2#1}}
\newcommand{\Int}[2]{\!\!\!\begin{array}[t]{ll}{\displaystyle\int #2} \\[-0.35em] {\scriptstyle\mbox{}\hspace*{0.3em} #1}\end{array}\!\!\!}

\DeclareMathOperator{\tr}{\mathrm{tr}}
\DeclareMathOperator{\diag}{\mathrm{diag}}
\DeclareMathOperator{\sign}{\mathrm{sign}}

\DeclareMathOperator{\RE}{\mathrm{Re}}
\DeclareMathOperator{\IM}{\mathrm{Im}}


\begin{document}
\begin{abstract}
We show that in the large matrix limit, the eigenvalues of the normal matrix model for matrices with spectrum inside a compact domain with a special class of potentials homogeneously fill the interior of a polynomial curve uniquely defined by the area of its interior domain and its exterior harmonic moments which are all given as parameters of the potential.

Then we consider the orthogonal polynomials corresponding to this matrix model and show that, under certain assumptions, the density of the zeros of the highest relevant orthogonal polynomial in the large matrix limit is (up to some constant factor) given by the discontinuity of the Schwarz function of this polynomial curve. 
\end{abstract}
\maketitle
\thispagestyle{empty}
\tableofcontents
\newpage

\section*{Introduction}
In 1999, P. Wiegmann and A. Zabrodin realised \cite{WZ,MWZ} that conformal maps from the exterior of the unit disc to the exterior of a simple closed analytic curve admit as functions of the exterior harmonic moments $(t_k)_{k=1}^\infty$ of the analytic curve the structure of an integrable hierarchy: the dispersionless Toda lattice hierarchy. (The Toda lattice hierarchy was introduced by K. Ueno and K. Takasaki in \cite{UT}. A review of the dispersionless Toda lattice hierarchy can be found in \cite{TT}.)

As was shown earlier by L.-L. Chau and Y. Yu \cite{CY,CZ}, the normal matrix model defined by the probability distribution 
\begin{equation}\label{e:NMMdefinition}
\mathcal P_{N,\mathcal V}(M)=\frac1{\mathcal Z_{N,\mathcal V}}\e^{-N\tr\mathcal V(M)},\quad \mathcal Z_{N,\mathcal V}=\int_{\mathcal N_N}\mathcal P_{N,\mathcal V}(M)\d M,
\end{equation}
on the space $\mathcal N_N$ of all normal $N\times N$ matrices with the potential 
\begin{equation}\label{e:NMMpotential}
\mathcal V(M)=\frac1{t_0}\left(MM^*-\sum_{k=1}^\infty(t_kM^k+\bar t_k{M^*}^k)\right)
\end{equation}
gives rise to a solution of the Toda lattice hierarchy. In \cite{KKMWZ}, this connection between normal matrix models and conformal maps was picked up and the authors showed 
that the eigenvalues in the large matrix limit homogeneously fill the interior of the analytic curve with the exterior harmonic moments $(t_k)_{k=1}^\infty$ and encircled area $\pi t_0$.

Nevertheless, all these results remain on the level of formal manipulations as in general already the integral in \eqref{e:NMMdefinition} diverges. 

The purpose of this work is now to proof these statements in a mathematically rigorous setting. To this end, we will first of all introduce a cut-off for the divergent integral by restricting ourselves to the space of those normal matrices whose spectrum lies in some compact domain. Furthermore, we will only consider polynomial potentials. Then, only finitely many exterior harmonic moments are different from zero and so the corresponding curve is polynomial. Since, at least for curves with small encircled area $\pi t_0$, we have a bijective relation between a polynomial curve and its harmonic moments, we finally can prove that, provided $t_0$ is small enough, in the large matrix limit the eigenvalues indeed homogeneously fill the interior of the polynomial curve uniquely determined by the exterior harmonic moments $(t_k)_{k=1}^\infty$ and the encircled area $\pi t_0$.

Moreover, we find that the distribution of the zeros of the $n$-th orthogonal polynomial $p_{n,N}$ associated to this matrix model is in the limit $\frac nN\to x\in[0,1]$, $N\to\infty$ (up to some constant) given by the discontinuity of the Schwarz function of the polynomial curve with area $\pi xt_0$ and exterior harmonic moments $(t_k)_{k=1}^\infty$, provided the zeros in this limit are confined to some one-dimensional tree-like graph.

The paper is organised as follows: 
In chaper one we will introduce the considered matrix model and review a few standard methods (see e.g. \cite{M}) to characterise the distribution of the eigenvalues. 
In the second chapter, we will show that polynomial curves with small area are uniquely determined by its exterior harmonic moments (for star-like domains this was already shown in \cite{VE}). 
Then, after having introduced the concept of the Schwarz function (see e.g. \cite{Da}), we will end up with the connection between harmonic moments of a polynomial curve and its parametrisation, which we will see is given by the dispersionless Toda lattice hierarchy. 
Thereafter, we will consider the large matrix limit, in chapter three from a general point of view which is well know from potential theory \cite{ST,J,D}, and in chapter four, we will apply those results to potentials of the form \eqref{e:NMMpotential} with only finitely many non-vanishing $t_k$, leading us to the desired result that for small $t_0$ the eigenvalues homogeneously fill the interior of the polynomial curve determined by the exterior harmonic moments $(t_k)_{k=1}^\infty$ and the area $\pi t_0$. 
In the last chapter, we will analyse the behaviour of the orthogonal polynomials of these potentials in the continuum limit and will find that they obey the Toda lattice hierarchy. 
Finally, we will connect the limiting distribution of the zeros of the $n$-th orthogonal polynomial to the discontinuity of the Schwarz function of the corresponding polynomial curve and calculate it explicitly for the Gaussian and the cubic case.

\section{The model}
\subsection{The eigenvalue distribution}
We start with the probability distribution
\[ \mathcal P_{N,\mathcal V}(M)=\frac1{\mathcal Z_{N,\mathcal V}}\e^{-N\tr\mathcal V(M)},\quad \mathcal Z_{N,\mathcal V}=\int_{\mathcal N_N(D)}\mathcal P_{N,\mathcal V}(M)\d M, \]
on the space
\[ \mathcal N_N(D)=\{M\in\C^{N\times N}\;|\;[M,M^*]=0,\;\sigma(M)\subset D\} \]
of all normal $N\times N$ matrices with spectrum in a compact subset $D\subset\C$, where $\mathcal V(M)$ denotes some polynomial in $M$ and $M^*$ such that $\mathcal V(M)=\mathcal V(M)^*$. The measure $\d M$ on the variety $\mathcal N_N(D)$ shall be the one induced by the flat metric on $\C^{N\times N}$.

\begin{proposition}
On the regular part of $\mathcal N_N(D)$, the measure $\d M$ factorises into 
\begin{equation}\label{e:normalMeasure}
\d M=\d U\prod_{1\le i<j\le N}|z_i-z_j|^2\prod_{i=1}^N\dd z_i,
\end{equation}
where $\{z_i\}_{i=1}^N$ is the spectrum of $M$, $M=U\diag(z_i)_{i=1}^NU^*$ for $U\in\U(N)/\U(1)^N$, and the measure
\begin{equation}\label{e:unitaryMeasure}
\d U=2^{\frac{N(N-1)}4}\prod_{i<j}\dd u_{ij},\quad \dd u_{ij}=(U^*)_{ik}\dd U_{kj},
\end{equation}
on $\U(N)/\U(1)^N$ is induced by the measure on $\U(N)$, which is given by the flat metric on $\C^{N\times N}$.
\end{proposition}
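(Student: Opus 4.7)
The plan is to parametrise the regular part of $\mathcal N_N(D)$ --- meaning the locus on which all $N$ eigenvalues are distinct --- by the map
\[ \phi\colon \U(N)/\U(1)^N\times\C^N\to\mathcal N_N(D),\quad (U,(z_i))\mapsto U\diag(z_i)U^*, \]
which on this locus is a local diffeomorphism with a generic $N!$-fold cover coming from reordering the eigenvalues, and then to pull back the flat Euclidean volume form on $\C^{N\times N}$ along $\phi$.

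Concretely, I would first differentiate $M=UDU^*$ with $D=\diag(z_i)$, introducing the matrix-valued anti-Hermitian 1-form $\theta:=U^*\d U$, to get
\[ U^*\d M\,U=\d D+[\theta,D]. \]
Its diagonal entries are $\d z_i$ and its off-diagonal $(i,j)$-entries are $\theta_{ij}(z_j-z_i)$. Since the flat metric $\tr(\d M\,\d M^*)$ on $\C^{N\times N}$ is invariant under conjugation by unitaries, this yields, using $\theta_{ji}=-\overline{\theta_{ij}}$,
\[ \|\d M\|^2=\sum_{i=1}^N|\d z_i|^2+2\sum_{i<j}|\theta_{ij}|^2|z_i-z_j|^2. \]
The crucial feature is that the induced metric is \emph{diagonal} in the coordinates $(z_i,u_{ij})_{i<j}$ with $u_{ij}:=\theta_{ij}$; note also that the diagonal components $\theta_{ii}$ are purely imaginary and generate exactly the $\U(1)^N$ action of eigenvector-rephasing, so they drop out on the quotient.

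Reading off the Riemannian volume form from this diagonal metric tensor, I obtain
\[ \d M=\prod_{i=1}^N\dd z_i\cdot\prod_{i<j}2|z_i-z_j|^2\,\dd u_{ij}, \]
in which the Vandermonde factor $\prod_{i<j}|z_i-z_j|^2$ of \eqref{e:normalMeasure} is already visible. To match the remaining constant with \eqref{e:unitaryMeasure}, I would then perform the analogous submersion calculation on $\U(N)\subset\C^{N\times N}$ alone: parametrising $\d U=U\theta$ for $\theta$ anti-Hermitian and computing the flat metric gives a volume form on $\U(N)$; quotienting by the diagonal torus $\U(1)^N$ (which acts freely and isometrically on the fibres) then yields $\d U$ on $\U(N)/\U(1)^N$ with precisely the normalisation $2^{N(N-1)/4}\prod_{i<j}\dd u_{ij}$.

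The main technical obstacle is the bookkeeping of the numerical factor $2^{N(N-1)/4}$: one must fix a convention for the 2-form $\dd u_{ij}$ attached to the complex 1-form $u_{ij}$ and carefully combine the volume factors coming from the $\U(N)$-directions and from the $\C^N$-directions. The conceptual key point, where normality is genuinely used, is the vanishing of the diagonal of $[\theta,D]$ for diagonal $D$: this is what orthogonalises the $\d D$-directions from the $\d U$-directions in the ambient flat metric, and hence lets the metric --- and therefore the volume form --- split as a product; without normality, cross terms would survive and the Vandermonde factor would not appear.
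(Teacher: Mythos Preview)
Your proposal is correct and follows essentially the same route as the paper: both parametrise the regular locus via $M=UDU^*$, introduce the anti-Hermitian form $\theta=U^*\d U$ (the paper writes $\dot u=U^*\dot U$ in tangent-vector notation), compute $\tr(\d M^*\d M)=\sum_i|\d z_i|^2+2\sum_{i<j}|z_i-z_j|^2|\theta_{ij}|^2$, and read off the Riemannian volume form from this block-diagonal metric, with a separate computation of the induced metric on $\U(N)/\U(1)^N$ to fix the constant $2^{N(N-1)/4}$. One small remark: the vanishing of the diagonal of $[\theta,D]$ holds for \emph{any} diagonal $D$ and is not itself where normality enters; normality is what makes the parametrisation $M=UDU^*$ with unitary $U$ and diagonal $D$ available in the first place.
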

\begin{proof}
Pulling back the tangent spaces of $\U(N)$ to the Lie algebra $\u(N)$, we uniformly get the metric $g(X,Y)=\tr(X^*Y)$ on $\u(N)$. Since the tangent spaces of the orbits $U\cdot\U(1)^N$, $U\in\U(N)$, are generated by the set $\mathfrak d$ of diagonal matrices in $\u(N)$, the tangent spaces of $\U(N)/\U(1)^N$ are generated by $\u(N)/\mathfrak d$, and the metric $g$ reduces to
\[ \tilde g(X,Y)=2\sum_{i<j}\bar X_{ij}Y_{ij},\quad X,Y\in\u(N)/\mathfrak d. \]
With the upper triangular matrix elements as coordinates, we therefore get the measure \eqref{e:unitaryMeasure} on $\U(N)/\U(1)^N$.

Let now $t\mapsto M(t)$ be a smooth curve in the regular part of $\mathcal N_N(D)$ with $M(0)=M$. Then we find a smooth curve 
\begin{align*} 
&t\mapsto(\vec z(t),U(t))\in D^N\times \U(N)/\U(1)^N\quad\textrm{with}\\
&M(t)=U(t)D(t)U(t)^*,\quad D(t)=\diag(\vec z(t)),
\end{align*}
which up to a permutation of the eigenvalues is uniquely determined. Again taking the pull back of the tangent space of $\U(N)$, $\dot u=U^*\dot U$, the flat metric on $\C^{N\times N}$ reads
\[ \tr(\dot M^*\dot M)=\tr(\dot D^*\dot D+2\dot u(D^*\dot u-\dot uD^*)D)=\sum_{i=1}^N\dot{\bar z}_i\dot z_i+\sum_{i,j=1}^N\dot{\bar u}_{ij}\dot u_{ij}|z_i-z_j|^2. \]
Choosing $\dot z_i$ and $(\dot u_{ij})_{i<j}$ as coordinates, we get the Riemannian volume form \eqref{e:normalMeasure}.
\end{proof}

Since $\mathcal N_N(D)$ is regular almost everywhere, we can integrate over the group $\U(N)/\U(1)^N$, leaving us with the probability distribution 
\begin{equation}\label{e:eigenvalueDistribution}
P_{N,V}(\vec z) = \frac1{Z_{N,V}}\e^{-N\sum_{i=1}^NV(z_i)}|\Delta(\vec z)|^2,\quad Z_{N,V}=\left(\int_{D^N}P_{N,V}(\vec z)\ddd{N}{\vec z}\right)^{-1}, 
\end{equation}
for the eigenvalues $\vec z=(z_i)_{i=1}^N\in D^N$, where $\Delta(\vec z)=\det(z_i^{j-1})_{i,j=1}^N=\prod_{i<j}(z_j-z_i)$ denotes the van-der-Monde determinant and $V(z)$ is the real-valued polynomial in $z$ and $\bar z$ induced by $\mathcal V(z\,\mathbbm1)=V(z)\,\mathbbm1$, $z\in D$. From now on, as long as it is clear which potential $V$ we use, we will suppress it in the notation.

Because the probability $P_N$ vanishes if two eigenvalues are equal, we may consider it as probability distribution on the regular part 
\[ D^N_0 = \{\vec z\in D^N\;|\;z_i\ne z_j\;\forall\,i\ne j\}. \]

To describe the behaviour of the eigenvalues, we are going to introduce the notion of correlation functions.
\begin{definition}
The $k$-point correlation function $R^{(k)}_N$, $1\le k\le N$, is given by
\[ R^{(k)}_N\big((z_i)_{i=1}^k\big)=\frac{N!}{(N-k)!}\int_{D^{N-k}}P_N\big((z_i)_{i=1}^N\big)\prod_{i=k+1}^N\dd z_i. \]
So $\frac1{N!}R_N^{(N)}=P_N$, and $\frac1NR_N^{(1)}$ is the density of the eigenvalues.
\end{definition}

\begin{definition}
We denote by $A_N(n,\lambda,a)$, $0\le n\le N$, the probability of having exactly $n$ eigenvalues in a disc $B_\lambda(a)$ of radius $\lambda>0$ around some point $a\in D$:
\begin{equation}\label{e:levelSpacing} 
A_N(n,\lambda,a)=\begin{pmatrix}N\\n\end{pmatrix}\Int{B_\lambda(a)^n\times(D\setminus B_\lambda(a))^{N-n}}{P_N(\vec z)\ddd N{\vec z}.}
\end{equation}
\end{definition}

\begin{proposition}\label{p:levelSpacing}
For $0\le n\le N$, $\lambda>0$, and $a\in D$, we have
\[ A_N(n,\lambda,a)=\frac1{n!}\sum_{k=0}^{N-n}\frac{(-1)^k}{k!}\Int{B_\lambda(a)^{n+k}}{R_N^{(n+k)}(\vec z)\ddd{(n+k)}{\vec z}}. \]
\end{proposition}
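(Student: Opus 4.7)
The plan is to expand the indicator for the complement region via inclusion–exclusion and then identify the resulting integrals with correlation functions.

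Write $B=B_\lambda(a)$. Starting from the definition \eqref{e:levelSpacing}, the integrand is $P_N(\vec z)\prod_{i=1}^n\mathbbm1_B(z_i)\prod_{i=n+1}^N\mathbbm1_{D\setminus B}(z_i)$. I would use $\mathbbm1_{D\setminus B}=\mathbbm1_D-\mathbbm1_B$ (recall every $z_i$ lies in $D$ anyway) and expand the product
\[ \prod_{i=n+1}^N(1-\mathbbm1_B(z_i))=\sum_{S\subset\{n+1,\dots,N\}}(-1)^{|S|}\prod_{i\in S}\mathbbm1_B(z_i). \]
Substituting this in and using the permutation symmetry of $P_N$ in $\vec z$, each term with $|S|=k$ contributes the same integral, so the sum collapses to
\[ A_N(n,\lambda,a)=\binom Nn\sum_{k=0}^{N-n}(-1)^k\binom{N-n}k\Int{B^{n+k}\times D^{N-n-k}}{P_N(\vec z)\ddd N{\vec z}}. \]

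Next I would recognise the inner integral via the definition of the correlation function: integrating out the last $N-n-k$ variables over $D$ yields
\[ \Int{B^{n+k}\times D^{N-n-k}}{P_N(\vec z)\ddd N{\vec z}}=\frac{(N-n-k)!}{N!}\Int{B^{n+k}}{R_N^{(n+k)}(\vec z)\ddd{(n+k)}{\vec z}}. \]
Plugging in and simplifying the combinatorial prefactor,
\[ \binom Nn\binom{N-n}k\frac{(N-n-k)!}{N!}=\frac1{n!\,k!}, \]
gives exactly the claimed identity.

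There is no genuine obstacle here; the argument is purely a bookkeeping exercise combining a binomial expansion of the indicator on the complement with the factorial normalisation built into the definition of $R_N^{(k)}$. The only mild subtlety is to justify the use of Fubini and the symmetrisation step, both of which are immediate because $P_N$ is integrable on the compact region $D^N$ and is symmetric in its arguments.
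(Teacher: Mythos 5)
Your proof is correct and follows essentially the same route as the paper: the paper establishes the same inclusion--exclusion identity (its equation for integrals over $(D\setminus A)^n$) by induction, whereas you obtain it directly by expanding $\prod_{i}(1-\mathbbm1_B(z_i))$ and invoking the symmetry of $P_N$, and the remaining identification with $R_N^{(n+k)}$ and the combinatorial simplification are identical.
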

\begin{proof}
Assume inductively that the equation
\begin{equation}\label{e:A_NInduction}
\Int{(D\setminus A)^n}{f(\vec z)\ddd n{\vec z}}=\sum_{k=0}^n(-1)^k\begin{pmatrix}n\\k\end{pmatrix}\Int{A^k\times D^{n-k}}{f(\vec z)\ddd n{\vec z}},
\end{equation}
which certainly is true for $n=0$, holds for some $A\subset D$ and any symmetric function $f$ on $D^n$ for some $n\in\N_0$. Then, for any symmetric function $f$ on $D^{n+1}$,
\begin{align*}
\Int{(D\setminus A)^{n+1}}{f(\vec z)\ddd{(n+1)}{\vec z}}&=\Int{D\times(D\setminus A)^n}{f(\vec z)\ddd{(n+1)}{\vec z}}-\Int{A\times(D\setminus A)^n}{f(\vec z)\ddd{(n+1)}{\vec z}} \\
&=\sum_{k=0}^n(-1)^k\begin{pmatrix}n\\k\end{pmatrix}\bigg(\Int{A^k\times D^{n-k+1}}{f(\vec z)\ddd{(n+1)}{\vec z}}-\Int{A^{k+1}\times D^{n-k}}{f(\vec z)\ddd{(n+1)}{\vec z}}\bigg) \\
&=\sum_{k=0}^{n+1}(-1)^k\begin{pmatrix}n+1\\k\end{pmatrix}\Int{A^k\times D^{n-k+1}}{f(\vec z)\ddd{(n+1)}{\vec z}},
\end{align*}
which proves the identity \eqref{e:A_NInduction} for all $n\in\N_0$ and any domain $A\subset D$.

Therefore,
\begin{align*}
A_N(n,\lambda,a)&=\begin{pmatrix}N\\n\end{pmatrix}\sum_{k=0}^{N-n}(-1)^k\begin{pmatrix}N-n\\k\end{pmatrix}\Int{B_\lambda(a)^{n+k}\times D^{N-n-k}}{P_N(\vec z)\ddd N{\vec z}} \\
&=\frac1{n!}\sum_{k=0}^{N-n}\frac{(-1)^k}{k!}\Int{B_\lambda(a)^{n+k}}{R_N^{(n+k)}(\vec z)\ddd{(n+k)}{\vec z}}.\qedhere
\end{align*}
\end{proof}

\subsection{Orthogonal polynomials}
One may formulate the problem of finding the correlation functions as the construction of a set of orthogonal polynomials.
\begin{definition}
We call the polynomials $p_{n,N}$ of degree $n$ with leading coefficient one, $n\in\N_0$, defined with the inner product 
\begin{equation}\label{e:innerProduct}
(f,g)_N = \int_D\overline{f(z)}g(z)\e^{-NV(z)}\dd z
\end{equation}
by the relation
\[ (p_{m,N},p_{n,N})_N=\delta_{m,n}h_{n,N}\quad\textrm{for all}\quad m,n\in\N_0 \]
for some positive numbers $h_{n,N}$, $n\in\N_0$, the orthogonal polynomials for the potential $V$ with norms $(h_{n,N})_{n=0}^\infty$. The normalised functions 
\[ q_{n,N}=\frac1{\sqrt{h_{n,N}}}\,p_{n,N},\quad n\in\N_0, \]
we call the orthonormal polynomials for the potential $V$.
\end{definition}
\begin{proposition}
Let $(p_{n,N})_{n=0}^\infty$ be the orthogonal polynomials for the potential $V$ with norms $(h_{n,N})_{n=0}^\infty$ and 
\[ K_N(w,z)=\e^{-\frac N2(V(w)+V(z))}\sum_{n=0}^{N-1}\frac{\overline{p_{n,N}(w)}p_{n,N}(z)}{h_{n,N}}. \]
Then
\begin{enumerate}
\item
$\D\int_DK_N(z,z)\dd z=N\quad\textrm{and}\quad\int_DK_N(w,v)K_N(v,z)\dd v=K_N(w,z)$,
\item
$\D Z_N=N!\prod_{n=0}^{N-1}h_{n,N}\quad\textrm{and}\quad R^{(k)}_N(\vec z)=\det\big(K_N(z_i,z_j)\big)_{i,j=1}^k$, $\vec z\in D^k$.
\end{enumerate}
\end{proposition}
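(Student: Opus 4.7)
The plan is to verify the two analytic identities in (i) directly from the definition of the orthogonal polynomials, and then use them as the engine of a standard Dyson-type reduction to obtain (ii). For the trace identity, I would set $w=z$ in $K_N$ to get $K_N(z,z) = \e^{-NV(z)}\sum_{n=0}^{N-1}|p_{n,N}(z)|^2/h_{n,N}$ and integrate term by term; each summand contributes $h_{n,N}/h_{n,N}=1$ by the very definition of $h_{n,N}$, so the total equals $N$. For the reproducing property, I would multiply the two series defining $K_N(w,v)$ and $K_N(v,z)$, pull out the $w$- and $z$-dependent exponential factors, and integrate over $v$; the cross-terms $\int_D p_{n,N}(v)\overline{p_{m,N}(v)}\,\e^{-NV(v)}\dd v$ equal $\delta_{n,m}h_{n,N}$ by orthogonality, which collapses the double sum back to the single sum defining $K_N(w,z)$.

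The cornerstone for (ii) is the rewriting
\[ \e^{-N\sum_{i=1}^NV(z_i)}|\Delta(\vec z)|^2 = \prod_{n=0}^{N-1}h_{n,N}\cdot\det\bigl(K_N(z_i,z_j)\bigr)_{i,j=1}^N. \]
Since $p_{j-1,N}$ is monic of degree $j-1$, column operations in the Vandermonde yield $\Delta(\vec z) = \det(p_{j-1,N}(z_i))_{i,j=1}^N$; setting $\psi_j(z) = \e^{-NV(z)/2}p_{j-1,N}(z)/\sqrt{h_{j-1,N}}$ recasts the left-hand side as $\prod_{n=0}^{N-1} h_{n,N}\cdot|\det(\psi_j(z_i))|^2$, and the identity $|\det A|^2 = \det(AA^*)$, together with the direct computation $(AA^*)_{ik} = K_N(z_k,z_i)$, yields the desired determinant (using that $\det M = \det M^T$ to swap the roles of $i$ and $k$).

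It then remains to integrate this determinant step by step. Expanding $\det(K_N(z_i,z_j))_{i,j=1}^n$ as a sum over permutations and separating those that fix $n$ from those for which $n$ lies in a longer cycle, I would use the trace identity on the former and the reproducing property on the latter to arrive at the Dyson-Mehta reduction
\[ \int_D\det\bigl(K_N(z_i,z_j)\bigr)_{i,j=1}^n\,\dd z_n = (N-n+1)\det\bigl(K_N(z_i,z_j)\bigr)_{i,j=1}^{n-1}, \]
with the factor $N-n+1$ arising as the difference between the diagonal contribution $N$ and the $n-1$ ways (each with opposite sign) in which a permutation of $\{1,\ldots,n-1\}$ extends to a permutation of $\{1,\ldots,n\}$ not fixing $n$. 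Iterating from $n=N$ down to $n=1$ gives $\int_{D^N}\det(K_N)\,\ddd N{\vec z} = N!$, hence $Z_N = N!\prod_{n=0}^{N-1} h_{n,N}$; stopping at $n=k+1$ produces the factor $(N-k)!$ that cancels the prefactor in the definition of $R_N^{(k)}$, leaving $\det(K_N(z_i,z_j))_{i,j=1}^k$. The derivation of this reduction formula, rather than either identity in (i), is where all the real bookkeeping sits.
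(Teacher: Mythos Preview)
Your proposal is correct and follows essentially the same route as the paper: part (i) is dispatched directly from orthonormality, the Vandermonde is rewritten via column operations and monic polynomials to express $P_N$ as $\prod h_{n,N}/Z_N$ times $\det(K_N(z_i,z_j))$, and then the Dyson--Mehta reduction formula is derived by splitting permutations according to whether they fix the last index, using the trace and reproducing identities from (i). The only cosmetic difference is that you package the passage from $|\Delta|^2$ to $\det K_N$ via $|\det A|^2=\det(AA^*)$, whereas the paper writes out the double permutation sum explicitly and collapses it; the content is identical.
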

\begin{proof}
The first part is a direct consequence of the orthonormality of the polynomials $(\frac1{\sqrt{h_{n,N}}}p_{n,N})_{n=0}^{N-1}$. For the second part, we write out the van-der-Monde determinants in equation \eqref{e:eigenvalueDistribution}, use the fact that the determinant is invariant under column operations, and get for $\vec z\in D^N$
\begin{align*}
P_N(\vec z)&=\frac{\prod_{n=0}^{N-1}h_{n,N}}{Z_N}\e^{-N\sum_{i=1}^NV(z_i)}\sum_{\sigma,\tau\in S_N}\sign(\tau\sigma)\prod_{i=1}^N\frac{\overline{p_{\sigma(i)-1,N}(z_i)}p_{\sigma(i)-1,N}(z_{\tau\sigma(i)})}{h_{\sigma(i)-1,N}}  \\
&=\frac{\prod_{n=0}^{N-1}h_{n,N}}{Z_N}\e^{-N\sum_{i=1}^NV(z_i)}\sum_{\tau\in S_N}\sign(\tau)\prod_{i=1}^N\left(\sum_{n=0}^{N-1}\frac{\overline{p_{n,N}(z_i)}p_{n,N}(z_{\tau(i)})}{h_{n,N}}\right) \\
&=\frac{\prod_{n=0}^{N-1}h_{n,N}}{Z_N}\det(K_N(z_i,z_j))_{i,j=1}^N.
\end{align*}

Using the properties of part (i), we find
\begin{align*}
&\int_D\det(K_N(z_i,z_j))_{i,j=1}^k\dd z_k \\
&\qquad=\det(K_N(z_i,z_j))_{i,j=1}^{k-1}\int_D K_N(z_k,z_k)\dd{z_k} \\
&\qquad\qquad-\sum_{j=1}^{k-1}\sum_{\sigma\in S_{k-1}}\sign(\sigma)\left(\prod_{\substack{i=1\\i\ne j}}^{k-1}K_N(z_i,z_{\sigma(i)})\right)\int_DK_N(z_j,z_k)K(z_k,z_{\sigma(j)})\dd{z_k} \\
&\qquad=(N-k+1)\det(K_N(z_i,z_j))_{i,j=1}^{k-1}.
\end{align*}
Integrating now recursively the expression for $P_N$, we get the relations of part~(ii). 
\end{proof}

\begin{proposition}
For $n\in\N$, the $n$-th orthogonal polynomial $p_{n,N}$ for the potential $V$ on the domain $D$ is given by
\begin{equation}\label{e:ogPolInt}
p_{n,N}(z_0)=\int_{D^n}\prod_{i=1}^n(z_0-z_i)P_{n,\frac NnV}(\vec z)\ddd{n}{\vec z},\quad z_0\in\C.
\end{equation}
\end{proposition}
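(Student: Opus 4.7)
The plan is to prove the formula by showing that the right-hand side, which I denote $\tilde p_n(z_0)$, is the unique monic polynomial of degree $n$ orthogonal to $1,z,\ldots,z^{n-1}$ with respect to $(\cdot,\cdot)_N$, and hence equals $p_{n,N}$. Monicity and the correct degree are immediate from the expansion
\[ \textstyle\prod_{i=1}^n(z_0-z_i)=z_0^n-e_1(\vec z)z_0^{n-1}+\cdots+(-1)^ne_n(\vec z) \]
combined with $\int P_{n,\frac Nn V}\ddd n{\vec z}=1$, so the substance of the proof is the vanishing of $(z_0^j,\tilde p_n)_N$ for $0\le j\le n-1$.

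For the orthogonality step, I would first note that the weight $\e^{-n\cdot\frac Nn V(z)}=\e^{-NV(z)}$ appearing in $P_{n,\frac Nn V}$ coincides with the weight of the inner product $(\cdot,\cdot)_N$, so the same sequence $(p_{k,N})$ of orthogonal polynomials is associated with both structures. Using the elementary identity
\[
\prod_{i=1}^n(z_0-z_i)\,\Delta(z_1,\ldots,z_n)=(-1)^n\Delta(z_0,z_1,\ldots,z_n)
\]
and the explicit form \eqref{e:eigenvalueDistribution} of the joint density, the inner product reduces to
\[
(z_0^j,\tilde p_n)_N=\frac{(-1)^n}{Z_{n,\frac Nn V}}\int_{D^{n+1}}\bar z_0^{\,j}\,\Delta(z_0,\ldots,z_n)\overline{\Delta(z_1,\ldots,z_n)}\,\e^{-N\sum_{i=0}^nV(z_i)}\ddd{(n+1)}{\vec w}.
\]
I would then expand each Vandermonde as a sum over permutations, with monomials $z^k$ replaced by the monic orthogonal polynomials $p_{k,N}(z)$ (a legitimate column operation inside a Vandermonde determinant), and carry out the integrals over $z_1,\ldots,z_n$ using $(p_{k,N},p_{\ell,N})_N=\delta_{k,\ell}h_{k,N}$. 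Orthogonality forces the multiset of indices assigned to $z_1,\ldots,z_n$ by $\Delta(z_0,\ldots,z_n)$ to coincide with $\{0,\ldots,n-1\}$, so the index $n$ is necessarily placed on $z_0$; summing over the $n!$ remaining orderings of $\{0,\ldots,n-1\}$ and collecting the signs yields the factor $(-1)^n\,n!\prod_{k=0}^{n-1}h_{k,N}=(-1)^n Z_{n,\frac Nn V}$ (using the formula $Z_N=N!\prod_k h_{k,N}$ from the preceding proposition), and the whole expression collapses to
\[ (z_0^j,\tilde p_n)_N=\int_D\bar z_0^{\,j}\,p_{n,N}(z_0)\,\e^{-NV(z_0)}\dd z_0=(z_0^j,p_{n,N})_N, \]
which vanishes for $j<n$ by definition of $p_{n,N}$.

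The principal difficulty is purely bookkeeping: one must correctly identify the sign $(-1)^n$ arising from the permutation that moves the index $n$ into the $z_0$-slot, and verify that the orthogonality relations pin down the contributing pair of permutations up to a common relabelling of $\{0,\ldots,n-1\}$. Once this combinatorial reduction is carried out, no analytic input beyond the orthogonality of $(p_{k,N})$ is required.
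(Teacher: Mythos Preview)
Your proof is correct. The approach, however, differs from the paper's: the paper starts from the classical Heine-type determinantal representation
\[
p_{n,N}(z_0)=(-1)^n\,\frac{\det\!\big((z_0^i)_{i=0}^n,\;(\int_D\bar z_j^{\,j-1}z_j^i\e^{-NV(z_j)}\dd z_j)_{i=0,\,j=1}^n\big)}{\det\!\big(\int_D\bar z_j^{\,j-1}z_j^{i-1}\e^{-NV(z_j)}\dd z_j\big)_{i,j=1}^n},
\]
pulls the integrals out of both determinants, and then symmetrises over $S_n$ to turn the products $\prod_j\bar z_j^{\,j-1}$ into $\overline{\Delta(\vec z)}$; the ratio then collapses directly to the integral formula. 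You run the logic in the opposite direction: take the integral formula as a candidate, check monicity, and verify orthogonality by expanding both Vandermondes in the orthogonal polynomials $p_{k,N}$ and integrating. Your route avoids invoking the Heine formula and instead leans on the orthogonality relations (and the identity $Z_{n,\frac Nn V}=n!\prod_{k=0}^{n-1}h_{k,N}$ from the preceding proposition), which makes the argument more self-contained once the $p_{k,N}$ are in hand; the paper's route is a pure determinant manipulation that does not need to know the $p_{k,N}$ explicitly. Either way the combinatorics is the same Vandermonde bookkeeping, and your sign and index tracking is correct: the constraint $\sigma(j)=\tau(j)-1$ for $j\ge1$ forces $\sigma(0)=n$, and $\mathrm{sgn}(\sigma)\,\mathrm{sgn}(\tau)=(-1)^n$ for every surviving pair.
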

\begin{proof}
By definition, the polynomial
\[ p_{n,N}(z_0)=(-1)^n\,\frac{\det\left(\left(z_0^i\right)_{i=0}^n,\left(\int_D\bar z_j^{j-1}z_j^i\e^{-NV(z_j)}\dd z_j\right)_{i=0,j=1}^n\right)}{\det\left(\int_D\bar z_j^{j-1}z_j^{i-1}\e^{-NV(z_j)}\dd z_j\right)_{i,j=1}^n}, \]
which by construction is orthogonal to all monomials of degree less than $n$ and has leading coefficient one, is the $n$-th orthogonal polynomial for the potential $V$. Pulling out the integrals of the determinant leads us to
\begin{align*}
p_{n,N}(z_0)&=(-1)^n\,\frac{\int_{D^n}\left(\prod_{j=1}^n\bar z_j^{j-1}\right)\det\left(z_j^i\right)_{i,j=0}^n\e^{-N\sum_{i=1}^nV(z_i)}\ddd n{\vec z}}{\int_{D^n}\left(\prod_{j=1}^n\bar z_j^{j-1}\right)\det\left(z_j^{i-1}\right)_{i,j=1}^n\e^{-N\sum_{i=1}^nV(z_i)}\ddd n{\vec z}} \\
&=(-1)^n\,\frac{\int_{D^n}\sum_{\sigma\in S_N}\sign(\sigma)\left(\prod_{j=1}^n\bar z_{\sigma(j)}^{j-1}\right)\det\left(z_j^i\right)_{i,j=0}^n\e^{-N\sum_{i=1}^nV(z_i)}\ddd n{\vec z}}{\int_{D^n}\sum_{\sigma\in S_N}\sign(\sigma)\left(\prod_{j=1}^n\bar z_{\sigma(j)}^{j-1}\right)\det\left(z_j^{i-1}\right)_{i,j=1}^n\e^{-N\sum_{i=1}^nV(z_i)}\ddd{n}{\vec z}}\\
&=\int_{D^n}\prod_{i=1}^n(z_0-z_i)P_{n,\frac NnV}(\vec z)\ddd{n}{\vec z}.\qedhere
\end{align*}
\end{proof}


\subsection{The potential $V(z)=\frac1{t_0}|z|^2$}
In this case, we can remove the cut-off by setting $D=\C$ and immediately find that the orthogonal polynomials are nothing but the monomials. So we get explicitly 
\[ \frac1NR_N^{(1)}(z)=\frac1{\pi t_0}\sum_{n=0}^{N-1}\frac{N^n}{t_0^n}\frac{|z|^{2n}}{n!}\e^{-\frac N{t_0}|z|^2}. \]
Considering now the limit $N\to\infty$, we get for the eigenvalue density
\begin{equation}
\begin{split}\label{e:densityCircle}
\lim_{N\to\infty}\frac1NR_N^{(1)}(z)&=\frac1{\pi t_0}\lim_{N\to\infty}\frac{N^{N+1}}{N!}\int_{\frac{|z|^2}{t_0}}^\infty x^{N-1}\e^{-Nx}\d x \\
&=\frac1{\pi t_0}\lim_{N\to\infty}\sqrt{\frac N{2\pi}}\int_{\frac{|z|^2}{t_0}}^\infty\frac1x\,\e^{-N(x-1-\log x)}\d x\\
&=\frac1{\pi t_0}\lim_{N\to\infty}\sqrt{\frac N{2\pi}}\int_{\frac{|z|^2}{t_0}-1}^\infty\frac1{1+x}\,\e^{-\frac N2x^2}\d x
=\begin{cases}0,&|z|^2>t_0,\\\frac1{2\pi t_0},&|z|^2=t_0,\\\frac1{\pi t_0},&|z|^2<t_0.\end{cases}
\end{split}
\end{equation}
In the continuum limit $N\to\infty$ therefore, the eigenvalues uniformly fill the disc with radius $\sqrt{t_0}$.

\begin{proposition}
Choosing the radius $\lambda=\sqrt{\frac xN}$, we find
\begin{equation}\label{e:A_N}
A_N(n,\lambda,0)=\left(\prod_{i=1}^N\Sigma_i(x)\right)\sum_{i_1<\ldots<i_n}\prod_{j=1}^n\frac{1-\Sigma_{i_j}(x)}{\Sigma_{i_j}(x)}, 
\end{equation}
where $\Sigma_i(x) = \e^{-\frac x{t_0}}\sum_{j=0}^{i-1}\frac{x^j}{j!\,t_0^j}$.
\end{proposition}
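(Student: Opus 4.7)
The plan is to exploit the rotational symmetry of the Gaussian potential, which makes the orthogonal polynomials $p_{n,N}(z)=z^n$ orthogonal not only on $\C$ but on every centred disc. First I would compute the norms $h_{n,N}=\pi t_0^{n+1}n!/N^{n+1}$ by a polar integration, so that the normalised functions
\[ f_n(z)=\sqrt{\frac{N^{n+1}}{\pi\,n!\,t_0^{n+1}}}\,z^n\,\e^{-\frac N{2t_0}|z|^2} \]
form an orthonormal family on $\C$ and give the kernel representation $K_N(w,z)=\sum_{n=0}^{N-1}\overline{f_n(w)}f_n(z)$.

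The key observation is that the angular integration still annihilates the cross-terms when we restrict to $B_\lambda(0)$, so the $f_n$ diagonalise the integral operator with kernel $K_N$ restricted to $B_\lambda(0)$. Its eigenvalues are
\[ \mu_{n+1}:=\int_{B_\lambda(0)}|f_n(z)|^2\dd z=\frac{\gamma(n+1,N\lambda^2/t_0)}{n!}=1-\Sigma_{n+1}(x), \]
where the last equality uses the substitution $u=Nr^2/t_0$, the identity $\gamma(n+1,a)/n!=1-\e^{-a}\sum_{j=0}^n a^j/j!$, and the choice $\lambda=\sqrt{x/N}$.

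With these eigenvalues at hand, the Fredholm expansion
\[ \sum_{m=0}^N\frac{\zeta^m}{m!}\int_{B_\lambda(0)^m}R_N^{(m)}(\vec z)\ddd m{\vec z}=\prod_{i=1}^N(1+\zeta\mu_i) \]
identifies each integral on the left with $m!\,e_m(\mu_1,\ldots,\mu_N)$, the $m$-th elementary symmetric polynomial in the $\mu_i$. Plugging this into Proposition~\ref{p:levelSpacing} gives
\[ A_N(n,\lambda,0)=\sum_{k=0}^{N-n}(-1)^k\binom{n+k}{n}e_{n+k}(\mu_1,\ldots,\mu_N), \]
which is precisely the coefficient of $z^n$ in $\prod_{i=1}^N(1-\mu_i+z\mu_i)=\prod_{i=1}^N\bigl(\Sigma_i(x)+z(1-\Sigma_i(x))\bigr)$. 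Factoring out $\prod_i\Sigma_i(x)$ and expanding the remaining product in elementary symmetric polynomials of $(1-\Sigma_i)/\Sigma_i$ yields the announced expression.

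The only delicate step is the identification of the $m$-fold integral of $\det(K_N(z_i,z_j))$ with $m!\,e_m(\mu)$; this is the standard fact $\tr\wedge^m K_{B_\lambda(0)}=e_m(\mu)$ for a self-adjoint finite-rank kernel, but it can also be verified by hand through expansion of the determinant along permutations, using the orthogonality relation $\int_{B_\lambda(0)}\overline{f_m}f_n\dd z=\mu_{n+1}\delta_{mn}$, and counting contributions according to cycle structure. Everything else is a bookkeeping exercise with generating functions.
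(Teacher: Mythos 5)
Your proof is correct, but it takes a genuinely different route from the paper's. Both arguments hinge on the same key observation — the rotational symmetry of $V(z)=\frac1{t_0}|z|^2$ keeps the monomials orthogonal on any centred disc, so that $\int_{B_\lambda(0)}\overline{q_{i,N}}q_{j,N}\e^{-NV}\dd z=\delta_{ij}(1-\Sigma_{i+1}(x))$. The paper then evaluates $A_N(n,\lambda,0)$ \emph{directly} from the defining integral \eqref{e:levelSpacing}: it writes $P_N$ as $\frac1{N!}\e^{-N\sum V(z_i)}|\det(q_{i-1,N}(z_j))|^2$, expands the squared determinant as a double sum over permutations, and observes that the orthogonality on both $B_\lambda(0)$ and its complement kills all off-diagonal terms, leaving $\frac1{n!(N-n)!}\sum_{\sigma\in S_N}\prod_{i\le n}(1-\Sigma_{\sigma(i)})\prod_{i>n}\Sigma_{\sigma(i)}$, which is the claimed expression. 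You instead pass through the correlation functions: you diagonalise the restricted kernel $K_N|_{B_\lambda(0)}$, invoke the identity $\int_{B_\lambda(0)^m}R_N^{(m)}=m!\,e_m(\mu_1,\dots,\mu_N)$, feed it into Proposition \ref{p:levelSpacing}, and resum the alternating series as the coefficient of $z^n$ in $\prod_i(\Sigma_i+z(1-\Sigma_i))$. This is the standard determinantal-point-process argument and is more general (it works whenever the restricted kernel can be diagonalised, not only for radial geometries), but it is also somewhat circular relative to the paper's logic: Proposition \ref{p:levelSpacing} was itself obtained from the definition \eqref{e:levelSpacing} by inclusion--exclusion, so you convert the definition into correlation integrals and then convert back with generating functions, whereas the paper stays with the definition throughout. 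The one step you rightly flag as delicate — $\tr\wedge^mK_{B_\lambda(0)}=e_m(\mu)$ — does hold here and your sketch of its verification (cycle-structure bookkeeping after expanding $\det(K_N(z_i,z_j))$ over permutations and using $\int_{B_\lambda(0)}\overline{f_m}f_n\dd z=\mu_{n+1}\delta_{mn}$) is sound; with it, the binomial resummation $\sum_{k}(-1)^k\binom{n+k}{n}e_{n+k}(\mu)=[z^n]\prod_i(1-\mu_i+z\mu_i)$ closes the argument correctly.
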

\begin{proof}
Since the orthonormal polynomials $(q_{n,N})_{n=0}^\infty$, $q_{n,N}(z)=\sqrt{\frac{N^{n+1}}{\pi t_0^{n+1}n!}}\,z^n$, for the potential $V$ on $\C$ are still orthogonal on $B_\lambda(0)$, namely
\[ \Int{\C\setminus B_\lambda(0)}{\overline{q_{i,N}(z)}q_{j,N}(z)\e^{-\frac N{t_0}|z|^2}\dd z} = \delta_{i,j}-\Int{B_\lambda(0)}{\overline{q_{i,N}(z)}q_{j,N}(z)\e^{-\frac N{t_0}|z|^2}\dd z} = \delta_{i,j}\Sigma_{i+1}(x), \]
we get
\begin{align*}
A_N(n,\lambda,0) &= \frac1{n!(N-n)!}\Int{B_\lambda(0)^n\times(\C\setminus B_\lambda(0))^{N-n}}{\e^{-\frac N{t_0}\sum_{i=1}^N|z_i|^2}\left|\det(q_{i-1}(z_j))_{i,j=1}^N\right|^2\ddd Nz} \\
&= \frac1{n!(N-n)!}\sum_{\sigma\in S_N}\prod_{i=1}^n(1-\Sigma_{\sigma(i)}(x))\prod_{i=n+1}^N\Sigma_{\sigma(i)}(x) \\
&= \left(\prod_{i=1}^N\Sigma_i(x)\right)\sum_{i_1<\ldots<i_n}\prod_{j=1}^n\frac{1-\Sigma_{i_j}(x)}{\Sigma_{i_j}(x)}.\qedhere
\end{align*}
\end{proof}

\begin{proposition}
The probability of finding exactly $n$ eigenvalues in a disc of radius $\lambda=\sqrt{\frac xN}$ around any point $a$ in the interior of the disc $B_{\sqrt{t_0}}$ is asymptotically for $N\to\infty$ given by $A_N(n,\lambda,0)$.
\end{proposition}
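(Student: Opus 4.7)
The plan is to exploit the determinantal structure of the correlation functions and turn the claim into an analytic statement about the kernel $K_N$. Combining Proposition~\ref{p:levelSpacing} (with $D=\C$) with the determinantal formula $R_N^{(m)}(\vec z)=\det(K_N(z_i,z_j))_{i,j=1}^m$, and then translating the integration variables by $z_i=a+\zeta_i$, I would rewrite
\[ A_N(n,\lambda,a)=\frac1{n!}\sum_{k=0}^{N-n}\frac{(-1)^k}{k!}\Int{B_\lambda(0)^{n+k}}{\det\bigl(K_N(a+\zeta_i,a+\zeta_j)\bigr)_{i,j=1}^{n+k}\ddd{(n+k)}{\vec\zeta}}. \]
The task thereby reduces to comparing the shifted determinants to their counterparts at $a=0$.

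The central step is to show that for $a$ with $|a|^2<t_0$ and $|\zeta|,|\zeta'|\le\lambda=\sqrt{x/N}$ the Ginibre kernel
\[ K_N(w,z)=\frac N{\pi t_0}\,\e^{-\frac N{2t_0}(|w|^2+|z|^2)}\sum_{j=0}^{N-1}\frac1{j!}\biggl(\frac{N\bar wz}{t_0}\biggr)^{\!j} \]
is asymptotically translation invariant up to a gauge. Applying Cram\'er's large deviation estimate for the Poisson distribution to replace the truncated exponential by $\e^{N\bar wz/t_0}$ (the relative error being $\Ord(\e^{-cN})$ with $c>0$ as long as $|\bar wz|/t_0$ stays in a compact subset of $(0,1)$), a short manipulation of the exponent yields
\[ K_N(a+\zeta,a+\zeta')=\frac N{\pi t_0}\,\e^{-\frac N{2t_0}|\zeta-\zeta'|^2+\i\frac N{t_0}\IM(\bar wz)}\bigl(1+\Ord(\e^{-cN})\bigr), \]
with $\IM(\bar wz)=\IM(\bar\zeta\zeta')+\IM(\bar a\zeta')-\IM(\bar a\zeta)$. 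The last two contributions to the phase form a pure gauge $g(\zeta')/g(\zeta)$ with $g(\zeta)=\exp\!\bigl(\i\tfrac N{t_0}\IM(\bar a\zeta)\bigr)$ and therefore cancel inside the determinant, leaving
\[ \det\bigl(K_N(a+\zeta_i,a+\zeta_j)\bigr)=\det\bigl(K_N(\zeta_i,\zeta_j)\bigr)\bigl(1+\Ord(\e^{-cN})\bigr) \]
uniformly on the relevant polydisc and on compact subsets of $\{|a|<\sqrt{t_0}\}$.

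To transfer this pointwise estimate to the integrals, I would invoke the Fischer--Hadamard bound $|\det(K_N(\zeta_i,\zeta_j))|\le\prod_iK_N(\zeta_i,\zeta_i)$, valid for the positive definite reproducing kernel $K_N$, together with the exact pointwise inequality $K_N(\zeta,\zeta)\le N/(\pi t_0)$ obtained by completing the truncated exponential series. This yields
\[ \frac1{k!}\Int{B_\lambda(0)^k}{\bigl|\det(K_N(\zeta_i,\zeta_j))_{i,j=1}^k\bigr|\ddd k{\vec\zeta}}\le\frac{(x/t_0)^k}{k!}, \]
so the series defining $A_N$ is absolutely and uniformly summable in $N$. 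Dominated convergence then produces $A_N(n,\lambda,a)-A_N(n,\lambda,0)\to0$ as $N\to\infty$, uniformly in $a$ on compact subsets of $\{|a|<\sqrt{t_0}\}$.

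The hard part will be the uniform Cram\'er estimate: the $\e^{-cN}$ decay from the Poisson tail has to beat the $N$-dependent phase factors in the kernel and hold uniformly in all relevant parameters. Because the rate function $\psi(r)=r-1-\log r$ is strictly positive and bounded away from zero on compact subsets of $(0,1)$, this works precisely when $a$ keeps a positive distance from the boundary $|z|=\sqrt{t_0}$, which is exactly the assumption of the proposition.
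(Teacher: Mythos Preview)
Your approach is essentially the same as the paper's: both express $A_N(n,\lambda,a)$ via Proposition~\ref{p:levelSpacing} and the determinantal formula, both show that $K_N(a+\zeta,a+\zeta')$ is asymptotically translation invariant up to a unimodular gauge factor (using the exponentially small Poisson tail of the truncated exponential when $|a|^2<t_0$), and both observe that this gauge cancels in the determinant. Your Hadamard bound to control the alternating series uniformly in $N$ is a useful addition that the paper omits --- it simply asserts that the asymptotic $a$-independence of each $R_N^{(k)}$ carries over to $A_N$.
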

\begin{proof}
According to Proposition \ref{p:levelSpacing}, the probability $A_N(n,\lambda,a)$ is given by
\[ A_N(n,\lambda,a) = \frac1{n!}\sum_{k=0}^{N-n}\frac{(-1)^k}{k!}\Int{B_1(0)^{n+k}}{\lambda^{2(n+k)}R_N^{(n+k)}((a+\lambda z_i)_{i=1}^{n+k})\ddd{(n+k)}{\vec z},} \]
where the correlation functions $R^{(k)}_N$, $1\le k\le N$, can be determined by
\[ R^{(k)}_N\big((z_i)_{i=1}^k\big) = \det(K_N(z_i,z_j))_{i,j=1}^k \]
with
\[ K_N(w,z) = Nk_N(\bar wz)\e^{-\frac N{2t_0}(|z|^2+|w|^2-2\bar wz)},\quad k_N(z)=\frac1{\pi t_0}\sum_{m=0}^{N-1}\frac{N^m}{t_0^m}\frac{z^m}{m!}\e^{-\frac N{t_0}z}. \]
Now $k_N(|z|^2)=\frac1NR^{(1)}_N(z)$ for all $z\in\C$, and so calculation \eqref{e:densityCircle} tells us that, for $z$ in some domain containing the interval $(0,t_0)$, we locally uniformly have $k_N(z)=\frac1{\pi t_0}+\ord(\e^{-N\varepsilon})$ in the limit $N\to\infty$ for some $\varepsilon>0$. Therefore, there exists some $\varepsilon>0$ such that in the limit $N\to\infty$ locally uniformly in $w$ and $z$
\[ K_N(a+\lambda w, a+\lambda z)=N\left(\frac1{\pi t_0}+\ord(\e^{-N\varepsilon})\right)\e^{-\frac x{2 t_0}(|z|^2+|w|^2-2\bar wz)+\i\frac{\sqrt{xN}}{t_0}\IM(\bar a(z-w))}. \]
So, for $1\le k\le N$,
\[ \frac1{N^k}R^{(k)}_N\big((a+\lambda z_i)_{i=1}^k\big)=\left(\frac1{(\pi t_0)^k}+\ord(\e^{-N\frac\varepsilon2})\right)\sum_{\sigma\in S_k}\sign(\sigma)\e^{-\frac x{t_0}\sum_{i=1}^k\bar z_i(z_i-z_{\sigma(i)})} \]
is asymptotically independent of $a$ and so is the probability $A_N(n,\lambda,a)$.
\end{proof}


\section{Polynomial curves}
\subsection{Harmonic moments}
\begin{definition}
A polynomial curve of degree $d$ is a smooth simple closed curve in the complex plane with a parametrisation $h:S^1\subset\C\to\C$ of the form
\begin{equation}\label{e:polCurve}
h(w)= rw+\sum_{j=0}^d a_jw^{-j},\quad |w|=1,
\end{equation}
with $r>0$ and $a_d\ne0$. The standard (counterclockwise) orientation of the circle induces an orientation on the curve. We say that a polynomial curve is positively oriented if this orientation is counterclockwise, that is if the tangent vector to the curve makes one full turn in counterclockwise direction as we go around the unit circle.  
\end{definition}

\begin{proposition}\label{p:polCurve}
Let $\gamma$ be a positively oriented polynomial curve with parametrisation $h$ of the form \eqref{e:polCurve}. Then $h$, viewed as homolorphic map on $\C^\times$, restricts to a biholomorphic map from the exterior of the unit disc onto the exterior of $\gamma$.
\end{proposition}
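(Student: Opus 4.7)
The plan is to count preimages of $h$ by the argument principle on a suitable annulus. For $R > 1$ set $A_R = \{w \in \C : 1 < |w| < R\}$; since the only pole of $h$ on $\C^\times$ is at $0$, the function $h$ is holomorphic on a neighbourhood of $\overline{A_R}$, so for any $z \in \C$ avoiding the image of $\partial A_R$ the number of solutions $w \in A_R$ of $h(w) = z$ equals the winding number of $h \circ \partial A_R$ around $z$.

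The boundary $\partial A_R$ consists of $\partial B_R$ traversed counterclockwise and $\partial B_1 = S^1$ traversed clockwise. On the outer circle the expansion $h(w) = rw + \Ord(|w|^{-d})$ shows that for $R$ large compared with $|z|$ the image is a small perturbation of the circle $\{|u| = rR\}$, contributing winding number $+1$. On the inner circle, $h|_{S^1}$ parametrises $\gamma$; combining the positive-orientation hypothesis with the standard fact that a simple closed curve whose tangent rotates counterclockwise by $2\pi$ has winding number $+1$ about every interior point (the Umlaufsatz together with the Jordan curve theorem), the clockwise traversal of $S^1$ contributes $-1$ for $z$ in the interior of $\gamma$ and $0$ for $z$ in the exterior. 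Summing the two contributions, $h$ takes each point in the exterior of $\gamma$ exactly once in $A_R$ (for any $R$ large enough to contain the preimage) and never takes a point of the interior of $\gamma$.

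Letting $R \to \infty$ yields a bijection from $\{|w| > 1\}$ onto the exterior of $\gamma$, which extends by $h(\infty) = \infty$ on the one-point compactifications. Since $h$ is holomorphic and injective on the open set $\{|w|>1\}$, a holomorphic inverse is automatic, so the restriction is a biholomorphism. The step I expect to require the most care is translating the positive-orientation hypothesis, phrased in terms of the rotation of the tangent vector of $\gamma$, into the winding-number statement used above: this rests on the Hopf Umlaufsatz together with the Jordan curve theorem, after which the remainder of the argument is entirely formal.
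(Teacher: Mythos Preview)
Your argument is correct and takes a different route from the paper. The paper applies the argument principle not to $h-z$ but to the tangent map $t(w)=\i w\,h'(w)$: the positive-orientation hypothesis says precisely that $w\mapsto t(w)/|t(w)|$ has degree one on $S^1$, and comparing $\frac1{2\pi\i}\oint t'/t\,\d w$ over $|w|=1$ with the same integral over a large circle (where it also equals one) shows that $h'$ has no zeros in $\{|w|>1\}$. The passage from $h'\ne0$ to a global biholomorphism onto the exterior of $\gamma$ is then left implicit (it follows, for instance, from a properness/degree argument at~$\infty$). Your approach sidesteps this by counting preimages of $h$ directly, obtaining injectivity and the identification of the image in one stroke; it is more self-contained for the proposition as stated. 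The paper's route, on the other hand, produces $h'\ne0$ on the exterior of the disc as its primary output, a fact the paper reuses later when defining the critical radius. Both arguments rest on the same topological core---the Umlaufsatz relating tangent rotation to winding number---which you invoke explicitly and the paper uses through the degree of $t/|t|$. (One small slip: $h(w)=rw+\Ord(1)$ as $|w|\to\infty$, not $\Ord(|w|^{-d})$, because of the constant term $a_0$; this does not affect your winding-number computation on the outer circle.)
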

\begin{proof}
We have to show that $h'(w)\ne0$ for all $w$ in the complement of the unit disc. Let $t$ denote the tangent vector map $w\mapsto t(w)=h'(w)\i w=\i(rw-\sum ja_jw^{-j})$. Since $\gamma$ is a simple closed curve, the map $w\mapsto t(w)/|t(w)|$ is a map of degree one from the unit circle to itself. Therefore, we have
\[ 1=\frac1{2\pi}\oint_{|w|=1}\!\!\!\!\!\!\!\d\,\mathrm{arg}(t(w)) =\frac1{2\pi\i}\oint_{|w|=1} \frac{t'(w)}{t(w)}\d w =N+\frac1{2\pi\i}\oint_{|w|=R} \frac{t'(w)}{t(w)}\d w. \]
Here $N\ge0$ denotes the number of zeros of $t(w)$, counted with multiplicity, in the complement of the unit disc and $R$ is so large that it contains them all. The latter integral is one as can be seen by sending $R$ to infinity. Thus, $N=0$, and $h'$ has no zeros in the complement of the unit disc.
\end{proof}

A simple consequence of this proposition is that a polynomial curve is uniquely para\-metrised by a map of the form \eqref{e:polCurve} with $r>0$. Indeed, any other conformal mapping of the complement differs by an automorphism of the complement of the unit disc. But non-trivial automorphisms are given by fractional linear transformations which do not preserve the conditions.

In the following, we therefore mean by the parametrisation of a polynomial curve always the parametrisation of the form \eqref{e:polCurve}.


\begin{definition}
The exterior harmonic moments $(t_k)_{k=1}^\infty$ of a polynomial curve $\gamma$ encircling the origin are defined by
\[ t_k= \frac1{2\pi\i k}\oint_\gamma\bar zz^{-k}\d z,\quad k\in\N. \]
The interior harmonic moments $(v_k)_{k=1}^\infty$ of $\gamma$ are 
\[ v_k =\frac1{2\pi\i}\oint_\gamma\bar zz^k\d z,\quad k\in\N. \]
\end{definition}

\begin{proposition}\label{p:polCurve/harmMom}
Let $\gamma$ be a positively oriented polynomial curve of degree $d$ encircling the origin.
\begin{enumerate}
\item 
The exterior harmonic moments $t_k$ of $\gamma$ vanish for all $k>d+1$.
\item
There exist universal polynomials $P_{j,k}\in\Z[r,a_0,\ldots,a_{j-k}]$, $1\le k\le j$, so that for $1\le k\le d+1$,
\begin{equation}\label{e:harmonicMoments}
kt_k=\bar a_{k-1}r^{-k+1}+ \sum_{j=k}^d\bar a_jr^{-j}P_{j,k}(r,a_0,\dots,a_{j-k}).
\end{equation}
Moreover, $P_{j,k}$ is a homogeneous polynomial of degree $j-k+1$ and it is also weighted homogeneous of degree $j-k+1$ for the assignment $\deg(a_j)=j+1$, $\deg(r)=0$.
\item
The area of the domain enclosed by $\gamma$ is $\pi t_0$ where
\begin{equation}\label{e:area}
t_0=r^2-\sum_{j=1}^dj|a_j|^2.
\end{equation}
\end{enumerate}
\end{proposition}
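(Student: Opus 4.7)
The plan is to substitute the parametrisation $z=h(w)$ into each of the three defining integrals and read off the answer as a residue in $w$. On the unit circle we have $\bar w=w^{-1}$, so $\bar z=\tilde h(w)$ with
\[ \tilde h(w):=rw^{-1}+\sum_{j=0}^d\bar a_jw^j, \]
a Laurent polynomial. By Proposition~\ref{p:polCurve}, $h$ has no zeros on $\{|w|\ge1\}$, so each contour may be deformed out to $|w|=R$ for arbitrary $R>1$ and all three integrals become coefficients in the Laurent expansion at $w=\infty$.

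For the exterior moments, the substitution produces
\[ kt_k=\frac1{2\pi\i}\oint_{|w|=R}\tilde h(w)\,h(w)^{-k}\,h'(w)\d w, \]
so $kt_k$ equals the coefficient of $w^{-1}$ in the Laurent expansion at $\infty$ of the integrand. Expanding
\[ h(w)^{-k}=(rw)^{-k}\bigg(1+\frac1r\sum_{j=0}^d a_jw^{-j-1}\bigg)^{-k} \]
by the binomial series yields a power series in $w^{-1}$ starting at $w^{-k}$, and the full product $\tilde h(w)h(w)^{-k}h'(w)$ is then a Laurent series at $\infty$ whose highest power is $w^{d-k}$. For $k>d+1$ this highest power is at most $w^{-2}$, so the coefficient of $w^{-1}$ vanishes, which proves part~(i).

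For $1\le k\le d+1$, the $w^{-1}$ coefficient receives one contribution from each monomial $\bar a_jw^j$ in $\tilde h$ with $j\ge k-1$, paired with the coefficient of $w^{-j-1}$ in $h(w)^{-k}h'(w)$. The pair $j=k-1$ needs only the leading coefficient $r^{1-k}$ and produces the first term $\bar a_{k-1}r^{-k+1}$. For $j\ge k$, a typical summand in the $w^{-k-m}$ coefficient of $h(w)^{-k}$ has the form $r^{-k-n'}a_{j_1}\cdots a_{j_{n'}}$ with $\sum_i(j_i+1)=m$; consequently the coefficient of $w^{-j-1}$ in $h(w)^{-k}h'(w)$ is a polynomial in $a_0,\dots,a_{j-k}$ and in $r^{\pm1}$ only, and multiplying by $r^j$ defines $P_{j,k}$, yielding~\eqref{e:harmonicMoments}. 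For the homogeneity, the same summand has standard degree $-k$ and weighted degree $m$; tracking further multiplication by the constant $r$ or the monomial $-l'a_{l'}$ from $h'$, by $\bar a_jw^j$ from $\tilde h$, and finally by the prefactor $r^j$ then gives the two claimed degrees $j-k+1$ of $P_{j,k}$.

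For part~(iii), Green's theorem gives the enclosed area as $\frac1{2\i}\oint_\gamma\bar z\d z=\frac1{2\i}\oint_{|w|=1}\tilde h(w)h'(w)\d w$. Both factors are Laurent polynomials, so extracting the $w^{-1}$ coefficient is a finite calculation: only the products $rw^{-1}\cdot r$ and $\bar a_jw^j\cdot(-ja_j)w^{-j-1}$ for $j=1,\dots,d$ contribute, producing the area $\pi\big(r^2-\sum_{j=1}^dj|a_j|^2\big)=\pi t_0$. The only real obstacle in the whole proof is the bookkeeping in part~(ii)—tracking the index ranges in the binomial expansion and checking both homogeneity statements—but this is purely mechanical, with nothing conceptually deeper than the residue extraction itself.
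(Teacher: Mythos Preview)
Your proof is correct and follows essentially the same route as the paper: both substitute $z=h(w)$, use that $\gamma$ encircles the origin (together with Proposition~\ref{p:polCurve}) to push the contour out and compute residues at infinity, expand $h(w)^{-k}$ binomially, and then read off the coefficient of $w^{-1}$; the area formula is likewise obtained from the same residue for $k=0$. The only cosmetic difference is that the paper obtains the weighted homogeneity of $P_{j,k}$ by rescaling $w$ in the integral rather than by your explicit degree bookkeeping, but the content is identical.
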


\begin{proof}
Let $h$ be the parametrisation of the polynomial curve $\gamma$. Then, since $\gamma$ encircles the origin, $h(w)\ne0$ for $|w|\ge1$. Hence, the contour in the formula for $t_k$ may be computed by taking residues at infinity. For $k\ge1$,
\begin{align*}
kt_k&=\frac1{2\pi\i}\oint_{|w|=1}{\bar h(w^{-1})}{h'(w)}h(w)^{-k}\d w\\
&=r^{-k}\sum_{j=0}^d\frac{\bar a_j}{2\pi\i}\oint_{|w|=1}w^{j-k}\left(r-\sum_{\ell=1}^d\ell a_\ell w^{-\ell-1}\right)\left(1+\sum_{\ell=0}^d\frac{a_\ell}r\,w^{-\ell-1}\right)^{-k}\d w.
\end{align*}
The integrals in this sum vanish if $j\le k-2$. The formula for $t_k$ in terms of $r$ and $(a_j)_{j=1}^d$ is obtained by expanding the geometric series and picking the coefficient of $w^{-1}$ in the integrand. This proves (i) and the first part of (ii). The homogeneity property is clear. The weighted homogeneity follows by rescaling $w$ in the integral. The same formula can be used to compute $t_0$, but the first term $rw^{-1}$ in $\bar h(w^{-1})$, which does not contribute to the integral and was omitted for $k\ge1$, must be added here.
\end{proof}


\begin{theorem}\label{t:uniquePolCurve} 
Let $t_2,\dots,t_{d+1}$ be complex numbers so that $|t_2|<\frac12$. Then there exists a $\delta>0$ so that for all $t_0$, $t_1$ with $0<t_0<\delta$ and $|t_1|^2<t_0(\frac12-|t_2|)$, there exists a unique positively oriented polynomial curve of degree less than or equal to $d$ encircling the origin with area $\pi t_0$ and exterior harmonic moments $(t_k)_{k=1}^\infty$ with $t_k=0$ for $k>d+1$.
\end{theorem}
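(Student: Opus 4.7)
The plan is to invert the map $F\colon(r,a_0,\ldots,a_d)\mapsto(t_0,t_1,\ldots,t_{d+1})$ defined by \eqref{e:area} and \eqref{e:harmonicMoments} by an implicit function theorem argument, after a rescaling that renders the $t_0\to0$ limit non-degenerate. I set $s=\sqrt{t_0}$ and introduce the rescaled unknowns $\rho=r/s$, $\alpha_0=a_0/s$, $\alpha_j=a_j/s^j$ for $j\ge1$, together with the rescaled first moment $\tau_1=t_1/s$, so that the hypothesis reads $|\tau_1|^2<\frac12-|t_2|$ while $t_2,\ldots,t_{d+1}$ are held fixed. A direct count using both degree statements in Proposition~\ref{p:polCurve/harmMom}(ii) shows that after this substitution each $P_{j,k}$ equals $s^{j-k+1}$ times a polynomial in $s,\rho,\alpha_0,\alpha_1,\ldots,\alpha_{j-k}$. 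Combined with $\bar a_jr^{-j}=\bar\alpha_j\rho^{-j}$ for $j\ge1$, this turns the harmonic-moment equations, after dividing the $k=1$ one by $s$, into an $s$-smooth system of $2d+3$ real equations for $(\rho,\alpha_0,\ldots,\alpha_d)$.

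Solving this system at $s=0$: the $k=2$ equation gives $\bar\alpha_1=2t_2\rho$, which inserted into the area equation $\rho^2-|\alpha_1|^2=1$ yields $\rho^2(1-4|t_2|^2)=1$, uniquely solvable for $\rho_*>0$ because $|t_2|<\frac12$. The equations $\bar\alpha_{k-1}=kt_k\rho^{k-1}$ for $k\ge3$ then fix $\alpha_2^*,\ldots,\alpha_d^*$. A short residue computation (or the direct $d=1$ calculation) identifies $P_{1,1}=-a_0$, so the $k=1$ equation reduces at leading order to $\bar\alpha_0-2t_2\alpha_0=\tau_1$; together with its complex conjugate this is a $2\times2$ linear system of determinant $1-4|t_2|^2>0$, uniquely solvable for $\alpha_0^*$.

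To apply the implicit function theorem I check that the Jacobian at $(s,\rho,\alpha)=(0,\rho_*,\alpha^*)$ is invertible. Linearising the $k\ge2$ equations gives $\delta\alpha_j=j\alpha_j^*\delta\rho/\rho_*$ for $j\ge1$; substituting $\delta\alpha_1=2\bar t_2\delta\rho$ into the linearised area equation produces $2\rho_*(1-4|t_2|^2)\delta\rho=0$, hence $\delta\rho=0$ and $\delta\alpha_j=0$ for $j\ge1$; the linearised $k=1$ equation then reads $\delta\bar\alpha_0-2t_2\delta\alpha_0=0$, forcing $\delta\alpha_0=0$ by the same $1-4|t_2|^2>0$ determinant. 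The implicit function theorem therefore furnishes, for some $\delta=\delta(t_2,\ldots,t_{d+1})>0$, a unique smooth solution of the full system for all $t_0<\delta$; undoing the rescaling yields the desired $(r,a_0,\ldots,a_d)$.

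It remains to verify that the associated $h(w)=rw+\sum_{j=0}^d a_jw^{-j}$ actually defines a positively oriented polynomial curve encircling the origin, and to upgrade the local uniqueness provided by the implicit function theorem to the global statement. At $s=0$ the curve degenerates to $h_0(w)=\rho_*w+\alpha_0^*+\alpha_1^*/w$, whose derivative vanishes only at $|w|^2=2|t_2|<1$, so $h'$ has no zero on $|w|\ge1$ for small $s$ by continuity; moreover, the Schur--Cohn condition for both roots of $wh_0(w)$ to lie in $|w|<1$ reduces, via the identity $2t_2\alpha_0^*-\bar\alpha_0^*=-\tau_1$ that follows from the $k=1$ equation, to $|\tau_1|^2<1-4|t_2|^2$, which is implied by the hypothesis $|\tau_1|^2<\frac12-|t_2|$. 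The main obstacle I anticipate is the global uniqueness, since the implicit function theorem only excludes competing solutions close to the reference point in the rescaled coordinates; I would tackle this by a recursive a priori bound, starting from the $k=d+1$ equation which determines $|a_d|$ as a function of $r$, propagating downward through the moment equations to obtain uniform control of $(r,a_0,\ldots,a_d)$ from the prescribed data, and concluding that, for $t_0$ below some threshold, every polynomial curve with the given moments must fall inside the neighbourhood to which the implicit function theorem already applies.
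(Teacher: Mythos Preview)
Your approach is correct and follows the same overall strategy as the paper---rescale the curve parameters to make the small-area limit nondegenerate, then invoke an inverse/implicit function theorem---but with a different choice of rescaling. The paper sets $\alpha_j=a_j r^{-j}$ and $\rho=r^2$, obtaining a polynomial map $F\colon(\rho,\alpha_0,\ldots,\alpha_d)\to(t_0,\ldots,t_{d+1})$ to which it applies the \emph{inverse} function theorem at the single base point $(\rho,\alpha_0)=(0,0)$ (corresponding to $t_0=t_1=0$); you instead scale by $s=\sqrt{t_0}$, treat $s$ and $\tau_1=t_1/s$ as external parameters, and apply the \emph{implicit} function theorem at $s=0$ for each $\tau_1$ in the compact ball. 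The paper's version is slightly cleaner (simpler base point, no compactness-in-$\tau_1$ step), while yours has the advantage that the limiting curve is already a nondegenerate ellipse rather than a point, which makes the geometric verifications more transparent. One small omission: you check that $h'$ has no zeros on $|w|\ge1$ and that $h$ itself has none (via Schur--Cohn, so the curve encircles the origin), but you do not explicitly verify that $h$ is \emph{injective} on $|w|=1$; this follows at once since your limit $h_0$ parametrises a nondegenerate ellipse, hence is an embedding, and embeddings of $S^1$ are stable under $C^1$-small perturbations. As for global uniqueness, you are right to flag it, and your recursive a~priori bound sketch is a reasonable line of attack; the paper is in fact no more explicit on this point, relying on the local uniqueness from the inverse function theorem without separately excluding solutions far from the base point.
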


\begin{proof}
The idea is to invert the map $(r,a_0,\dots,a_d)\mapsto (t_0,\dots,t_{d+1})$ defined by \eqref{e:harmonicMoments} and \eqref{e:area} for small $r$ and $a_0$. Set $\alpha_j=r^{-j}a_j$, $\rho=r^2$ and consider the resulting polynomial map 
\[ F:(\rho,\alpha_0,\dots,\alpha_d)\to (t_0,\dots,t_{d+1}), \] 
as a map from $\mathbb R\times\C^{n+1}$ to itself. The first claim is that this map has a smooth inverse in some neighbourhood of any point $(t_0,\dots,t_{d+1})\in\R\times\C^{d+1}$ with $t_0=t_1=0$ and $|t_2|\ne\frac12$.
By Proposition \ref{p:polCurve/harmMom}, this map is given by 
\begin{align*}
t_0&=\rho-\sum_{j=1}^d\rho^j j|\alpha_j|^2 \\
kt_k&=\bar\alpha_{k-1}+\sum_{j=k}^d\bar\alpha_jP_{j,k}(r,\alpha_0,r\alpha_1,\dots,r^{j-k}\alpha_{j-k}) \\
&=\bar\alpha_{k-1}+\sum_{j=k}^d\bar\alpha_jP_{j,k}(\rho,\alpha_0,\alpha_1,\dots,\alpha_{j-k}),\quad 1\le k\le d+1,
\end{align*}
where the universal polynomials $P_{j,k}$, $1\le k\le j$, are given by
\begin{align*}
&P_{j,k}(\rho,\alpha_0,\dots,\alpha_{j-k})=\\
&\qquad\frac1{2\pi\i}\oint_{|w|=R} w^{j-k}\left(1-\sum_{\ell=1}^d\ell\alpha_\ell\rho^\ell w^{-\ell-1}\right)\left(1+\sum_{\ell=0}^d\alpha_\ell\rho^\ell w^{-\ell-1}\right)^{-k}\d w
\end{align*}
for any sufficiently large $R$. By computing the residue at infinity, we can calculate $P_{j,k}$ and thus $t_k$ up to terms of at least second order in $\alpha_0$ and $\rho$,
\begin{align*}
t_0&=(1-|\alpha_1|^2)\rho+\cdots,\\
kt_k&=\bar\alpha_{k-1}-k\bar\alpha_k\alpha_0-(k+1)\bar\alpha_{k+1}\alpha_1\rho+\cdots,\quad1\le k\le d+1.
\end{align*}
Hence, $F(0,0,2\,\bar t_2,\dots,(d+1)\,\bar t_{d+1})=(0,0,t_2,\dots,t_{d+1})$ and the tangent map at this point sends $(\dot\rho,\dot\alpha_0,\dots,\dot\alpha_d)$ to $(\dot t_0,\dots,\dot t_{d+1})$ with
\begin{align*}
\dot t_0&=(1-4|t_2|^2)\dot\rho,\\
k\dot t_k&=\bar{\dot\alpha}_{k-1}-k(k+1)t_{k+1}\dot\alpha_0-2(k+1)(k+2)t_{k+2}\bar t_2\dot\rho,\quad 1\le k\le d+1.
\end{align*}
The tangent map is invertible if $|t_2|\ne\frac12$. By the inverse function theorem, $F$ has a smooth inverse on some neighbourhood of $(0,0,t_2,\dots,t_{d+1})$. If $|t_2|<\frac12$, $F$ preserves the positivity of the first coordinate.

In terms of the original variables, this means that given any $(t_0,\dots,t_{d+1})$ with small $t_0>0$, $t_1$ and such that  $|t_2|\ne\frac12$,  there is a curve $w\mapsto h(w)$  with
\[ h(w)=rw+\sum_{j=0}^dr^j\alpha_jw^{-j}\quad\textrm{and}\quad\alpha_j=(j+1)\,\bar t_{j+1}+\Ord(r^2). \]
There remains to show that if $r>0$ is small enough, $h$ parametrises a positively oriented simple closed curve containing the origin. We first show that $h$ is an immersion. Since $h'(w)=r-r\alpha_1w^{-2}+\Ord(r^2)$ and $\lim_{r\to 0}\alpha_1=2\,\bar t_2$, we see that as long as $|t_2|\ne\frac12$, $h'(w)$ does not vanish on the unit circle. Similarly, we show that $h:S^1\to\C$ is injective: we have
\begin{align*}
|h(w)-h(w')|^2&=r|w-w'+2\,\bar t_2(w^{-1}-{w'}^{-1})|+\Ord(r^2) \\
&=r|w-w'+2\,\bar t_2(\bar w-\bar w')|+\Ord(r^2).
\end{align*}
But the expression in the absolute value can only vanish for $w\ne w'$ if $|2\,\bar t_2|=1$, which is excluded by the hypothesis. Moreover, $h(w)=rw+\bar t_1+2r\bar t_2w^{-1}+\Ord(r^2)$. Therefore, $h$ parametrises a perturbation of an ellipse centered at $\bar t_1$. The condition on $t_1$ is a sufficient condition for this ellipse to contain the origin. 
\end{proof}

\begin{definition}
We denote the space of all harmonic moments $\vec t=(t_k)_{k=0}^\infty$ (including the area $t_0$ of the interior domain in units of $\pi$) fulfilling the conditions of Theorem \ref{t:uniquePolCurve} for $d\in\N$ by $\mathcal T_d$.
\end{definition}

In the following, we will always consider a polynomial curve to be positively oriented.



\subsection{The Schwarz function}
\begin{definition}
The Schwarz function of a polynomial curve $\gamma$ is defined as the analytic continuation (in a neighbourhood of the curve) of the function $S(z)=\bar z$ on $\gamma$. The Schwarz reflection $\rho$ for the polynomial curve in this domain is the anti-holomorphic map $\rho(z)=\overline{S(z)}$.
\end{definition}

\begin{definition}
Let $\gamma$ be a polynomial curve and $h$ its parametrisation. The critical radius $R$ of $\gamma$ is then defined as
\[ R = \max\{|w|\;|\;h'(w)=0,\; w\in\C^\times\}, \]
which by Proposition \ref{p:polCurve} is less than one.
\end{definition}

The following proposition now shows that the Schwarz function is indeed well defined.
\begin{proposition}
Let $\gamma$ be a polynomial curve parametrised by $h$ and $R$ its critical radius. Then the Schwarz function $S$ and the Schwarz reflection $\rho$ of the curve $\gamma$ restricted to $h(B_{R\I}\setminus\bar B_R)$, where $B_R$ denotes the open disc with radius $R$ around zero, are biholomorphic and anti-biholomorphic maps respectively. They are given by
\begin{equation}
S(z) = \bar h\left(\frac{1}{h\I(z)}\right)\quad\textrm{and}\quad\rho(z)=h\left(\frac{1}{\bar h\I(\bar z)}\right),\label{e-reflection}
\end{equation}
where $h\I$ denotes the inverse of $h|_{\C\setminus B_R}$.

Therefore, $\rho$ maps $\gamma$ identically on itself, $h(B_1\setminus\bar B_R)$ to $h(B_{R\I}\setminus\bar B_1)$ and vice versa. Additionally, we have that $\rho\circ\rho$ is the identity.
\end{proposition}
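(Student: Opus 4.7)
The plan is to proceed in three parts. First, I would establish that $h$ restricted to the annulus $A:=B_{R^{-1}}\setminus\bar B_R$ is a biholomorphism onto its image $\Omega:=h(A)$. By the definition of the critical radius, $h'$ is non-vanishing on $A$, so $h|_A$ is a local biholomorphism, and Proposition~\ref{p:polCurve} already gives biholomorphicity on the outer half $B_{R^{-1}}\setminus\bar B_1$. To propagate injectivity into the inner half-annulus $B_1\setminus\bar B_R$ I would argue that for $s$ slightly less than $1$ the image $h(\{|w|=s\})$ is a smooth simple closed curve lying in the interior of $\gamma$, and since $h'\ne0$ throughout $A$ no self-intersection can develop as $s$ decreases to $R$; the images of distinct circles remain disjoint by continuity and a winding number count, yielding a well-defined inverse $h^{-1}:\Omega\to A$.

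With the biholomorphism in hand, the formula for $S$ follows essentially formally. The map $S(z)=\bar h(1/h^{-1}(z))$ is well-defined on $\Omega$ since $|h^{-1}(z)|\in(R,R^{-1})$ forces $|1/h^{-1}(z)|\in(R,R^{-1})$, where $\bar h$ (the polynomial with conjugated coefficients) is holomorphic. Holomorphicity of $S$ is clear; to identify it with the Schwarz function I would compute boundary values on $\gamma$: writing $w:=h^{-1}(z)$ with $|w|=1$ gives $1/w=\bar w$ and $\bar h(\bar w)=\overline{h(w)}=\bar z$, so $S$ agrees with complex conjugation on $\gamma$. Biholomorphicity is automatic since $S$ is a composition of three biholomorphisms between annular domains. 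For $\rho$, the identity $\bar h(w)=\overline{h(\bar w)}$ yields $\bar h^{-1}(\bar z)=\overline{h^{-1}(z)}$, so $\rho(z)=h(1/\overline{h^{-1}(z)})=\overline{S(z)}$, whence anti-biholomorphicity follows from that of the conjugate of $S$. The mapping properties follow by substituting $w=h^{-1}(z)$: on $\gamma$, $1/\bar w=w$ gives $\rho(z)=h(w)=z$; for $R<|w|<1$, $1<|1/\bar w|<R^{-1}$ gives the claimed swap of $h(B_1\setminus\bar B_R)$ with $h(B_{R^{-1}}\setminus\bar B_1)$; and if $z':=\rho(z)$ then $h^{-1}(z')=1/\bar w$, so $\rho(z')=h(1/\overline{1/\bar w})=h(w)=z$, confirming the involution.

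The hard part will be the global injectivity of $h$ on the inner half-annulus. Local injectivity from $h'\ne0$ and exterior injectivity from Proposition~\ref{p:polCurve} do not suffice by themselves, and one must exclude that the level curves $h(\{|w|=s\})$ for $R<s<1$ self-overlap. My approach would be a Riemann--Hurwitz count: $h$ is a rational function of degree $d+1$ with a pole of order $d$ at the origin and all remaining critical points inside $\bar B_R$, so for any non-critical $z\in\Omega$ the $d+1$ preimages split as one in $A$ and $d$ collapsing into $\bar B_R$. This last fact is anchored at large $|z|$, where any preimage $w$ away from infinity must satisfy $|w|^d\sim|a_d|/|z|\to0$, and is then propagated across $\Omega$ by continuity, since preimages cannot cross $|w|=R$ without meeting a critical point.
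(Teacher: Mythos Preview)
Your overall strategy matches the paper's: once $h$ is known to be biholomorphic on $\C\setminus\bar B_R$, the formulas $S(z)=\bar h(1/h^{-1}(z))$ and $\rho(z)=\overline{S(z)}$ follow by composition, and the mapping properties and the involution $\rho\circ\rho=\id$ are then formal. Your treatment of these formal parts is correct and in fact more explicit than the paper's terse argument. You are also right that the global injectivity of $h$ on the inner half-annulus is the substantive point; the paper simply asserts it (``Because for $w\in\C\setminus\bar B_R$ the function $h$ is biholomorphic'') and moves on.

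Where your proposal has a genuine gap is precisely in that substantive point. The claim ``preimages cannot cross $|w|=R$ without meeting a critical point'' is not correct: a continuously varying preimage $w(z)$ can cross the circle $\{|w|=R\}$ at any of its points, critical or not---the critical radius is merely the \emph{maximum} modulus of a critical point, not a barrier. What \emph{is} locally constant is the number of preimages lying in $\bar B_R$, but only on the complement of the image curve $h(\{|w|=R\})$; so propagating the count from large $|z|$ to a given $z\in\Omega$ requires a path avoiding $h(\{|w|=R\})$, and your sketch does not show that such a path exists. The Riemann--Hurwitz bookkeeping is right, but by itself it does not yield the partition ``one preimage in $A$, $d$ in $\bar B_R$'' without further control over how $h(\{|w|=R\})$ sits relative to $\Omega$---and that control is essentially the injectivity you are trying to prove.
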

\begin{proof}
By definition of the Schwarz function, in a neighbourhood of $|w|=1$,
\[ S(h(w))=\bar h(w\I). \]
Because for $w\in\C\setminus\bar B_R$ the function $h$ is biholomorphic, we may write
\[ S(z) = \bar h\left(\frac{1}{h\I(z)}\right),\quad \rho(z)=\overline{S(z)}=h\left(\frac{1}{\bar h\I(\bar z)}\right). \]
Taking the derivatives, we find that they do not vanish for $z\in h(B_{R\I}\setminus\bar B_R)$, and so $S$ and $\rho$ are biholomorphic functions on $h(B_{R\I}\setminus\bar B_R)$.
\end{proof}
\begin{proposition}\label{p:expansion of schwarz function}
The Schwarz function of a polynomial curve $\gamma$ of degree $d$ encircling the origin, with parametrisation $h$ and critical radius $R$, is analytic on $h(\C\setminus\bar B_R)$ and has the Laurent representation
\begin{equation}\label{e:SchwarzFunctionExpansion}
S(z)=\sum_{k=1}^{d+1}kt_kz^{k-1}+\frac{t_0}z+\sum_{k=1}^\infty v_kz^{-k-1}
\end{equation}
around infinity, where $(t_k)_{k=1}^\infty$ are the exterior harmonic moments of $\gamma$, $(v_k)_{k=1}^\infty$ are the interior harmonic moments of $\gamma$, and $\pi t_0$ denotes the area of the interior domain of $\gamma$.
\end{proposition}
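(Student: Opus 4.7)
The plan is to combine the explicit representation $S(z)=\bar h(1/h\I(z))$ from the preceding proposition with the identity $S(z)=\bar z$ on $\gamma$, and then extract the Laurent coefficients of $S$ around infinity via contour integrals over $\gamma$.

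First I would upgrade the domain of analyticity from $h(B_{R\I}\setminus\bar B_R)$ to all of $h(\C\setminus\bar B_R)$. Since $h$ is biholomorphic on $\C\setminus\bar B_R$ (by Proposition \ref{p:polCurve}) and its image there avoids $0$ (the origin lies in $\bar B_R$), the composition $\bar h(1/h\I(z))$ is well defined and analytic on $h(\C\setminus\bar B_R)$: the only pole of $\bar h$ is at $u=0$, and $1/h\I(z)$ never hits $0$ on this set. As $z\to\infty$, $h\I(z)\sim z/r$, so $1/h\I(z)\to0$, and the pole of $\bar h$ of order $d$ at $u=0$ produces a pole of order exactly $d$ at infinity for $S$; no other singularities lie in the exterior of $\gamma$.

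Second, because $R<1$ the set $h(\bar B_R)$ is strictly inside $\gamma$, and since $\gamma$ encircles the origin, $z=0$ lies inside $\gamma$ as well. Consequently $S(z)z^{-n-1}$ is analytic in the exterior of $\gamma$, so $\gamma$ is homologous within the domain of analyticity to a large circle lying in the annulus of convergence of the Laurent expansion $S(z)=\sum_{n\le d}c_nz^n$ about infinity. Hence
\[ c_n=\frac1{2\pi\i}\oint_\gamma S(z)\,z^{-n-1}\d z=\frac1{2\pi\i}\oint_\gamma\bar z\,z^{-n-1}\d z, \]
where in the second equality I used $S|_\gamma=\bar z|_\gamma$.

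Third I would match these integrals with the definitions. For $1\le k\le d+1$, taking $n=k-1$ gives $c_{k-1}=kt_k$ by the definition of the exterior harmonic moments, and the truncation at $k=d+1$ is consistent with Proposition \ref{p:polCurve/harmMom}(i). For $k\ge1$, taking $n=-k-1$ gives $c_{-k-1}=v_k$ directly from the definition of the interior harmonic moments. The only remaining coefficient $c_{-1}=\frac1{2\pi\i}\oint_\gamma\bar z\,\d z$ is evaluated by Green's theorem: writing $\bar z\,\d z=\tfrac12\d(|z|^2)+\i(x\,\d y-y\,\d x)$ reduces the integral to $2\i$ times the enclosed area, so $c_{-1}=\mathrm{Area}(\mathrm{int}\,\gamma)/\pi=t_0$. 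Assembling these pieces yields \eqref{e:SchwarzFunctionExpansion}. The only genuine subtlety is justifying the analytic extension past the outer boundary of $h(B_{R\I}\setminus\bar B_R)$ and confirming that $\gamma$ is a legitimate coefficient-extracting contour; once these are in place, the remaining work is a direct translation of Laurent coefficients into the defining integrals for $t_k$, $v_k$ and $t_0$.
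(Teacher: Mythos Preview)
Your proposal is correct and follows the same route as the paper: identify the Laurent coefficients of $S$ about infinity as contour integrals over $\gamma$, use $S|_\gamma=\bar z$, and read off the definitions of $t_k$, $v_k$, and $t_0$ (the paper compresses all of this into a one-line appeal to Cauchy's integral formula). One small correction: the biholomorphicity of $h$ on $\C\setminus\bar B_R$ is not the content of Proposition~\ref{p:polCurve} (which only treats $|w|>1$) but rather comes from the preceding proposition on the Schwarz function, where the inverse $h\I$ on $\C\setminus B_R$ is introduced; also, the parenthetical ``the origin lies in $\bar B_R$'' and the claim that $h(\C\setminus\bar B_R)$ avoids $0$ are neither needed nor generally true---what you actually use (and correctly state) is that $1/h\I(z)\ne0$ because $h\I(z)$ is finite.
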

\begin{proof}
With Cauchy's integral formula, this follows immediately from the definition of the harmonic moments:
\begin{align*}
&t_0=\frac1{2\pi\i}\oint_\gamma S(z)\d z,\quad t_k = \frac1{2\pi\i k}\oint_\gamma z^{-k}S(z)\d z,\quad\textrm{and}\\
&v_k = \frac1{2\pi\i}\oint_\gamma z^kS(z)\d z\quad\text{for}\quad k\in\N. \qedhere
\end{align*}
\end{proof}

\begin{proposition}\label{p:singularitiesSchwarz}
Inside a polynomial curve $\gamma$ the Schwarz function $S$ of $\gamma$ becomes a multivalued function which has no singularities, unless the curve is a circle where we have $S(z)=\frac{t_0}z$.
\end{proposition}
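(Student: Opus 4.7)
The plan is to invoke the formula $S(z)=\bar h(1/h\I(z))$ from the preceding proposition and continue it analytically into the interior of $\gamma$ by using all branches of the multivalued inverse $h\I$. Concretely, I would regard $S$ as a single-valued holomorphic function on the Riemann surface of $h\I$ and locate the points where it could develop poles.

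The first step is a structural observation about $h$: for every finite $z\in\C$, the equation $h(w)=z$ amounts, after clearing denominators, to a polynomial equation
\[ rw^{d+1}+(a_0-z)w^d+a_1w^{d-1}+\cdots+a_d=0 \]
of degree $d+1$ with leading coefficient $r>0$ and constant term $a_d$. In the non-degenerate case $d\ge 1$, the hypothesis $a_d\ne 0$ forces all $d+1$ roots to lie in $\C^\times$. Consequently $h\I$ defines a $(d+1)$-sheeted branched cover $\pi\colon\mathcal R\to\C$ of the finite plane whose generic fibre sits in $\C^\times$; the finitely many critical values of $h$ contribute only branch points of $\mathcal R$, not poles.

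The second step is to lift $S$ to $\mathcal R$ via $S=\bar h(1/w)$, where $w$ denotes the tautological holomorphic function on $\mathcal R$. Since $\bar h(u)=ru+\bar a_0+\sum_{j=1}^d\bar a_j u^{-j}$ has poles only at $u=0$ and $u=\infty$, and $1/w$ maps $\C^\times$ biholomorphically to $\C^\times$, the composition $\bar h(1/w)$ is holomorphic at every point of $\mathcal R$ lying over a finite $z$. This settles the absence of singularities of $S$ inside $\gamma$ in the case $d\ge 1$.

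The exceptional case $d=0$, where $\gamma$ is a circle, is precisely the situation in which $h$ becomes linear and $h(0)$ is a finite point, so that one preimage of some $z\in\C$ coincides with $w=0$, which is a pole of $\bar h(1/w)$. For the circle centered at the origin, $h(w)=rw$ and a direct substitution gives $S(z)=\bar h(r/z)=r^2/z=t_0/z$ via the area formula of Proposition~\ref{p:polCurve/harmMom}. I expect the main subtlety to lie not in the pole counting but in the clean formulation of the Riemann surface of $h\I$ and of the claim that $S$ literally lifts to a holomorphic function on it; once that setup is in place, the dichotomy between the generic and the circle case reduces to reading off the divisor of $\bar h$ composed with inversion.
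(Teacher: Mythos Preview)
Your proposal is correct and follows essentially the same route as the paper: both arguments use the representation $S=\bar h\circ(1/\cdot)\circ h\I$ and observe that a branch of $S$ can only blow up at $z\in\{h(0),h(\infty)\}$, which for $d\ge 1$ both equal $\infty$ because $a_d\ne 0$. Your write-up is simply more explicit---working out the degree-$(d+1)$ polynomial for $h\I$ and phrasing everything on the Riemann surface---whereas the paper compresses the same idea into three lines.
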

\begin{proof}
Because of the representation \eqref{e-reflection} for the Schwarz function, a branch of the Schwarz function may only diverge in $z\in\C$ if $z\in\{h(0),h(\infty)\}$. If the polynomial curve is not a circle, we have $h(0)=\infty=h(\infty)$. So, the only singularity is located at infinity. 
\end{proof}


\subsection{The Toda lattice hierarchy}
\begin{definition}
Let $N\in\N$. The Toda lattice hierarchy is defined by the Lax-Sato equations
\begin{align}
&\frac{\partial L_N}{\partial t_k}=\frac N{t_0}[M^{(k)}_N,L_N],&&\frac{\partial\tilde L_N}{\partial t_k}=\frac N{t_0}[M^{(k)}_N,\tilde L_N], \label{e:Toda1} \\
&\frac{\partial L_N}{\partial \tilde t_k}=\frac N{t_0}[L_N,\tilde M^{(k)}_N],&&\frac{\partial\tilde L_N}{\partial \tilde t_k}=\frac N{t_0}[\tilde L_N,\tilde M^{(k)}_N], \label{e:Toda2}
\end{align}
$k\in\N$, for the difference operators
\begin{align*}
&L_N(t_0,\vec t,\vec{\tilde t})=r_N(t_0,\vec t,\vec{\tilde t})\e^{\frac{t_0}N\partial_{t_0}}+\sum_{j=0}^\infty a_{j,N}(t_0,\vec t,\vec{\tilde t})\e^{-\frac{jt_0}N\partial_{t_0}}\quad\textrm{and}\\
&\tilde L_N(t_0,\vec t,\vec{\tilde t})=\tilde r_N(t_0,\vec t,\vec{\tilde t})\e^{-\frac{t_0}N\partial_{t_0}}+\sum_{j=0}^\infty\tilde a_{j,N}(t_0,\vec t,\vec{\tilde t})\e^{\frac{jt_0}N\partial_{t_0}},
\end{align*}
$\vec t=(t_k)_{k=1}^\infty$, $\vec{\tilde t}=(\tilde t_k)_{k=1}^\infty$, on the space of all analytic functions in $t_0$, where
\begin{equation}\label{e:MMatrix}
M^{(k)}_N=(L_N^k)_++\frac12(L_N^k)_0\quad\textrm{and}\quad \tilde M^{(k)}_N=(\tilde L_N^k)_-+\frac12(\tilde L_N^k)_0,\quad k\in\N.
\end{equation}
Here $O_+$, $O_0$, and $O_-$ denote the positive, constant and negative part of the operator $O$ in the shift operator $\e^{\frac{t_0}N\partial_{t_0}}$.
\end{definition}

\newcommand{\M}[1]{M_N^{(#1)}}
\renewcommand{\Mc}[1]{M_N^{\mathrm{c}\,(#1)}}
\newcommand{\tM}[1]{\tilde M_N^{(#1)}}
\newcommand{\tMc}[1]{\tilde M_N^{\mathrm{c}\,(#1)}}
\begin{proposition}\label{p:compatibility}
Let $L_N$ and $\tilde L_N$ be a solution of the Toda lattice hierarchy. Then, with $\M k$ and $\tM k$ given by \eqref{e:MMatrix}, the compatibility relations
\begin{align}
&\frac{\partial\M\ell}{\partial t_k}-\frac{\partial\M k}{\partial t_\ell}=\frac N{t_0}[\M k,\M\ell],  
&&\frac{\partial\tM\ell}{\partial\tilde t_k}-\frac{\partial\tM k}{\partial\tilde t_\ell}=\frac N{t_0}[\tM\ell,\tM k], \label{e:compatibilityToda1} \\
&\frac{\partial\M\ell}{\partial\tilde t_k}+\frac{\partial\tM k}{\partial t_\ell}=\frac N{t_0}[\M\ell,\tM k], \label{e:compatibilityToda2}
\end{align}
$k,\ell\in\N$, for the equations \eqref{e:Toda1} and \eqref{e:Toda2} are fulfilled.
\end{proposition}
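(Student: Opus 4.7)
The plan is to derive all three identities as purely algebraic consequences of the Lax-Sato equations \eqref{e:Toda1}--\eqref{e:Toda2}, using the canonical splitting $L_N^\ell=\M\ell+\Mc\ell$ with $\Mc\ell=(L_N^\ell)_- + \frac12(L_N^\ell)_0$ (so that $\M\ell$ has only non-negative shifts and $\Mc\ell$ only non-positive shifts), together with the obvious commutativities $[L_N^k,L_N^\ell]=0$ and $[\tilde L_N^k,\tilde L_N^\ell]=0$.

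First I would establish the higher Lax equations
\[ \frac{\partial L_N^\ell}{\partial t_k}=\frac N{t_0}[\M k,L_N^\ell],\qquad \frac{\partial L_N^\ell}{\partial\tilde t_k}=\frac N{t_0}[L_N^\ell,\tM k], \]
and the analogous pair for $\tilde L_N^\ell$, by applying the standard telescoping identity $\partial(L^\ell)=\sum_{i+j=\ell-1}L^i(\partial L)L^j$ to \eqref{e:Toda1} and \eqref{e:Toda2}. Since the three projections onto positive, zero and negative shifts commute with $\partial_{t_k}$ and $\partial_{\tilde t_k}$, differentiating the definitions \eqref{e:MMatrix} then yields explicit formulas for $\partial_{t_k}\M\ell$, $\partial_{t_k}\tM\ell$, $\partial_{\tilde t_k}\M\ell$ and $\partial_{\tilde t_k}\tM\ell$ as projected commutators.

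For the first equation of \eqref{e:compatibilityToda1} I would substitute $L^\ell=\M\ell+\Mc\ell$ and $L^k=\M k+\Mc k$ into $[L^k,L^\ell]=0$ to get the clean identity $[\M k,L^\ell]-[\M\ell,L^k]=[\M k,\M\ell]-[\Mc k,\Mc\ell]$. A short grading check then shows that $[\M k,\M\ell]$ lies entirely in the strictly positive shift component, since the one potentially constant piece $\frac14[(L^k)_0,(L^\ell)_0]$ vanishes because multiplication operators in $t_0$ commute; symmetrically $[\Mc k,\Mc\ell]$ is purely strictly negative. Hence $(\cdot)_+ + \frac12(\cdot)_0$ reproduces $[\M k,\M\ell]$ exactly and kills $[\Mc k,\Mc\ell]$, giving \eqref{e:compatibilityToda1}; the second equation is the mirror image using $(\cdot)_- + \frac12(\cdot)_0$ and $[\tilde L^k,\tilde L^\ell]=0$. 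For the mixed equation \eqref{e:compatibilityToda2} I would sum $\partial_{\tilde t_k}\M\ell$ and $\partial_{t_\ell}\tM k$ and split the commutators as before; the cross terms $[\Mc\ell,\tM k]$ and $[\M\ell,\tMc k]$ have both factors of the same sign (non-positive and non-negative respectively), so they are annihilated by $(\cdot)_+ + \frac12(\cdot)_0$ and $(\cdot)_- + \frac12(\cdot)_0$ respectively, leaving only contributions that reassemble, via $(\cdot)_+ + (\cdot)_0 + (\cdot)_- = \mathrm{id}$, into $[\M\ell,\tM k]$.

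The main obstacle is not conceptual but notational: every vanishing claim of the form ``this commutator has no zero part'' must be verified by tracking exactly which powers of the shift $\e^{t_0\partial_{t_0}/N}$ can appear, keeping in mind that the coefficients are non-commuting with the shift but do commute among themselves. The factor $\frac12$ in front of $(L^k)_0$ in \eqref{e:MMatrix} is precisely what turns the projections into an $R$-matrix with the right compatibility; without it the cancellations above would not work out.
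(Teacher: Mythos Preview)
Your proposal is correct and follows essentially the same strategy as the paper: use the higher Lax equations for $L_N^\ell$ and $\tilde L_N^\ell$, exploit the splitting $L_N^\ell=\M\ell+\Mc\ell$ together with $[L_N^k,L_N^\ell]=0$, and then project. Your treatment of the mixed relation \eqref{e:compatibilityToda2} is in fact slightly more direct than the paper's: the paper verifies separately that the $+$, $-$, and $0$ parts of $\partial_{\tilde t_k}\M\ell+\partial_{t_\ell}\tM k-\frac{N}{t_0}[\M\ell,\tM k]$ vanish (with an extra computation for the zero part), whereas you use the observation that a commutator of two operators of the same sign has vanishing zero part to kill the cross terms immediately and then reassemble $[\M\ell,\tM k]$ from its three pieces.
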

\begin{proof}
Let $k,\ell\in\N$, and let us introduce $\Mc k=L_N^k-\M k$ and $\tMc k=\tilde L_N^k-\tM k$.
Then 
\begin{align*}
\frac{\partial L_N^\ell}{\partial t_k}-\frac{\partial L_N^k}{\partial t_\ell} &= \frac N{t_0}[\M k,\M\ell+\Mc\ell]+\frac N{t_0}[\Mc\ell,\M k+\Mc k] \\
&=\frac N{t_0}[\M k,\M\ell]+\frac N{t_0}[\Mc\ell,\Mc k].
\end{align*}
Adding now the positive and half of the zeroth part of this equation gives us the first of the relations \eqref{e:compatibilityToda1}.
On the same way, we get by taking the negative and zeroth part of
\begin{align*}
\frac{\partial\tilde L_N^\ell}{\partial\tilde t_k}-\frac{\partial\tilde L_N^k}{\partial\tilde t_\ell} &= \frac N{t_0}[\tM\ell+\tMc\ell,\tM k]+\frac N{t_0}[\tM k+\tMc k,\tMc\ell] \\
&=\frac N{t_0}[\tM\ell,\tM k]+\frac N{t_0}[\tMc k,\tMc\ell].
\end{align*}
the second relation of \eqref{e:compatibilityToda1}.
Now
\begin{align*}
&\left(\frac{\partial\M\ell}{\partial\tilde t_k}+\frac{\partial\tM k}{\partial t_\ell}-\frac N{t_0}[\M\ell,\tM k]\right)_+=\left(-\frac{\partial\Mc\ell}{\partial\tilde t_k}+\frac{\partial\tM k}{\partial t_\ell}+\frac N{t_0}[\Mc\ell,\tM k]\right)_+ = 0, \\
&\left(\frac{\partial\M\ell}{\partial\tilde t_k}+\frac{\partial\tM k}{\partial t_\ell}-\frac N{t_0}[\M\ell,\tM k]\right)_-=\left(\frac{\partial\M\ell}{\partial\tilde t_k}-\frac{\partial\tMc k}{\partial t_\ell}+\frac N{t_0}[\M\ell,\tMc k]\right)_- = 0, 
\end{align*}
and therefore, since
\begin{align*}
\left(-\frac{\partial\Mc\ell}{\partial\tilde t_k}+\frac{\partial\tM k}{\partial t_\ell}+\frac N{t_0}[\Mc\ell,\tM k]\right)_0 
&= -\frac12\left(\frac{\partial L_N^\ell}{\partial\tilde t_k}\right)_0+\frac12\left(\frac{\partial\tilde L_N^k}{\partial t_k}\right)_0 \\
&=-\left(\frac{\partial\M\ell}{\partial\tilde t_k}-\frac{\partial\tMc k}{\partial t_\ell}+\frac N{t_0}[\M\ell,\tMc k]\right)_0,
\end{align*}
equation \eqref{e:compatibilityToda2} is fulfilled, too. 
\end{proof}

Taking formally the limit $N\to\infty$ of the Toda lattice hierarchy, the shift operator $\e^{\frac{t_0}N\partial_{t_0}}$ will be replaced by a variable $w$ and the scaled commutator $\frac N{t_0}[\cdot,\cdot]$ becomes a Poisson bracket with respect to the canonical variables $\log w$ and $t_0$. This leads us to the following definition of the dispersionless Toda lattice hierarchy.
\begin{definition}\label{d:dispersionlessTH}
The dispersionless Toda lattice hierarchy is given by the equation system
\begin{align*}
&\frac{\partial z}{\partial t_k}=\{M_k,z\},&&\frac{\partial\tilde z}{\partial t_k}=\{M_k,\tilde z\}, \\
&\frac{\partial z}{\partial\tilde t_k}=\{z,\tilde M_k\},&&\frac{\partial\tilde z}{\partial\tilde t_k}=\{\tilde z,\tilde M_k\},
\end{align*}
$k\in\N$, for functions $z$ and $\tilde z$ of $w$, $t_0$, $\vec t=(t_k)_{k=1}^\infty$, and $\vec{\tilde t}=(\tilde t_k)_{k=1}^\infty$ of the form
\begin{align*}
&z(w,t_0,\vec t,\vec{\tilde t})=r(t_0,\vec t,\vec{\tilde t})w+\sum_{j=0}^\infty a_j(t_0,\vec t,\vec{\tilde t})w^{-j}\quad\textrm{and} \\
&\tilde z(w,t_0,\vec t,\vec{\tilde t})=\tilde r(t_0,\vec t,\vec{\tilde t})w^{-1}+\sum_{j=0}^\infty\tilde a_j(t_0,\vec t,\vec{\tilde t})w^j.
\end{align*}
Here, with $f_+$, $f_0$, and $f_-$ denoting the positive, constant and negative part of a function $f$ considered as power series in $w$, 
\begin{equation}\label{e:dispersionlessMMatrix}
M_k=(z^k)_++\frac12(z^k)_0\quad\textrm{and}\quad \tilde M_k=(\tilde z^k)_-+\frac12(\tilde z^k)_0,\quad k\in\N,
\end{equation}
and the Poisson bracket is defined as
\[ \{f,g\}=w\frac{\partial f}{\partial w}\frac{\partial g}{\partial t_0}-w\frac{\partial f}{\partial t_0}\frac{\partial g}{\partial w}. \]
\end{definition}
The compatibility relations for the dispersionless Toda lattice hierarchy are also given as the limit $N\to\infty$ of the compatibility relations \eqref{e:compatibilityToda1} and \eqref{e:compatibilityToda2} of the dispersionful Toda lattice hierarchy.
\begin{proposition}
Let $z$ and $\tilde z$ be a solution of the dispersionless Toda lattice hierarchy. Then $M_k$ and $\tilde M_k$ given by \eqref{e:dispersionlessMMatrix} fulfil the equations
\begin{align*}
&\frac{\partial M_\ell}{\partial t_k}-\frac{\partial M_k}{\partial t_\ell}=\{M_k,M_\ell\}, &&\frac{\partial\tilde M_\ell}{\partial\tilde t_k}-\frac{\partial\tilde M_k}{\partial\tilde t_\ell}=\{\tilde M_\ell,\tilde M_k\}, \\
&\frac{\partial M_\ell}{\partial\tilde t_k}+\frac{\partial\tilde M_k}{\partial t_\ell}=\{M_\ell,\tilde M_k\}, &&k,\ell\in\N.
\end{align*}
\end{proposition}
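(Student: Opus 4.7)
The plan is to mirror the proof of Proposition \ref{p:compatibility}, replacing the scaled commutator $\tfrac{N}{t_0}[\cdot,\cdot]$ by the Poisson bracket $\{\cdot,\cdot\}$ and the shift operator $\e^{\frac{t_0}{N}\partial_{t_0}}$ by the variable $w$. The key substitute for the identity $[L_N^k, L_N^\ell]=0$ is the trivial dispersionless analogue $\{z^k, z^\ell\} = 0$ (and likewise $\{\tilde z^k, \tilde z^\ell\} = 0$), which holds since $z^k$ and $z^\ell$ are both functions of the single variable $z$.

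First I introduce the complementary pieces $M_k^c = z^k - M_k = (z^k)_- + \tfrac{1}{2}(z^k)_0$ and $\tilde M_k^c = \tilde z^k - \tilde M_k = (\tilde z^k)_+ + \tfrac{1}{2}(\tilde z^k)_0$. Using that $\{\cdot,\cdot\}$ is a derivation in each argument, the hierarchy equations yield $\partial_{t_k} z^\ell = \{M_k, z^\ell\}$ and analogous identities for the remaining time derivatives, and the relation $\{z^k, z^\ell\} = 0$ gives the replacement rule $\{M_k, z^\ell\} = -\{M_k^c, z^\ell\}$.

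For the $\vec t$-equation I compute exactly as in the operator case,
\[
\partial_{t_k} z^\ell - \partial_{t_\ell} z^k = \{M_k, M_\ell + M_\ell^c\} + \{M_\ell^c, M_k + M_k^c\} = \{M_k, M_\ell\} + \{M_\ell^c, M_k^c\},
\]
and then apply the projection $\pi_+(f) = f_+ + \tfrac{1}{2} f_0$. Because $\pi_+(z^\ell) = M_\ell$, the left-hand side becomes $\partial_{t_k} M_\ell - \partial_{t_\ell} M_k$. On the right, since $w\partial_w$ kills the $w^0$ coefficient, a direct inspection of the Poisson-bracket formula shows that $\{M_k, M_\ell\}$ contains only strictly positive powers of $w$ while $\{M_\ell^c, M_k^c\}$ contains only strictly negative ones; hence $\pi_+$ fixes the former and annihilates the latter, producing the first of the claimed equations. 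The $\vec{\tilde t}$-equation follows by the completely analogous computation using $\pi_-(f) = f_- + \tfrac{1}{2} f_0$.

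The mixed equation requires the same three-part dissection as in Proposition \ref{p:compatibility}. Setting $Y = \partial_{\tilde t_k} M_\ell + \partial_{t_\ell} \tilde M_k - \{M_\ell, \tilde M_k\}$, I use the replacement rules to write $Y$ in two equivalent forms,
\[
Y = \{M_\ell^c, \tilde M_k\} - \partial_{\tilde t_k} M_\ell^c + \partial_{t_\ell} \tilde M_k = \{M_\ell, \tilde M_k^c\} + \partial_{\tilde t_k} M_\ell - \partial_{t_\ell} \tilde M_k^c,
\]
whose terms lie in $w^{\le 0}$ and $w^{\ge 0}$ respectively. This immediately gives $Y_+ = 0$ from the first form and $Y_- = 0$ from the second. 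Finally, reading off the $w^0$-coefficient of each form (the Poisson bracket terms vanish since they are strictly of one sign), I obtain
\[
Y_0 = -\tfrac{1}{2}(\partial_{\tilde t_k} z^\ell)_0 + \tfrac{1}{2}(\partial_{t_\ell} \tilde z^k)_0 = \tfrac{1}{2}(\partial_{\tilde t_k} z^\ell)_0 - \tfrac{1}{2}(\partial_{t_\ell} \tilde z^k)_0,
\]
which forces $Y_0 = 0$, and thus $Y \equiv 0$, establishing the mixed compatibility.

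I expect the main obstacle to be the bookkeeping in the mixed case: one has to verify carefully that each of $\partial_{\tilde t_k} M_\ell^c$, $\partial_{t_\ell} \tilde M_k^c$, $\{M_\ell^c, \tilde M_k\}$ and $\{M_\ell, \tilde M_k^c\}$ really do land in the claimed half-spaces of Laurent powers of $w$, so that the sign-flip responsible for the $w^0$ cancellation actually materialises. Everything else is a routine formal translation of the dispersionful argument.
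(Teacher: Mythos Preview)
Your proposal is correct and follows exactly the approach the paper indicates: the paper's own proof consists of the single sentence ``follow the lines of the proof of Proposition~\ref{p:compatibility}, substituting the scaled commutators $\tfrac{N}{t_0}[\cdot,\cdot]$ by the Poisson brackets $\{\cdot,\cdot\}$,'' and you have carried out precisely that translation with all details filled in. In particular, your observation that $\{M_k,M_\ell\}$ and $\{M_\ell,\tilde M_k^c\}$ contain only strictly positive powers of $w$ (and the dual statements for strictly negative powers) is the correct dispersionless analogue of the triangular-splitting argument, and your two-form computation of $Y_0$ reproduces the zeroth-order cancellation exactly as in the dispersionful proof.
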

\begin{proof}
The proof follows exactly the lines of the proof of Proposition \ref{p:compatibility}. We only have to adapt the notation and substitute the scaled commutators $\frac N{t_0}[\cdot,\cdot]$ by the Poisson brackets $\{\cdot,\cdot\}$.
\end{proof}

\subsection{Integrable hierarchy of conformal maps}
We consider the parametrisation $h$ of a polynomial curve as function of the parameter $w\in\C^\times$, of the area $t_0$ (in units of $\pi$) and the harmonic moments $(t_k)_{k=1}^\infty$ of the curve. Set $z(w,\vec t)=h(w)$ and $\tilde z(w,\vec t)=\bar h(w\I)$, where $\vec t=(t_k)_{k=0}^\infty\in\mathcal T_d$ for some $d\in\N$ and $w\in\C^\times$.
\begin{proposition}\label{p:canonical transformation}
On $\C^\times\times\mathcal T_d$, we have 
$\{z,\tilde z\}=1$,
where the Poisson bracket $\{\cdot,\cdot\}$ is defined as
\begin{equation}\label{e:PoissonBracket}
\{f,g\}(w,\vec t)=w\frac{\partial f}{\partial w}(w,\vec t)\frac{\partial g}{\partial t_0}(w,\vec t)-w\frac{\partial f}{\partial t_0}(w,\vec t)\frac{\partial g}{\partial w}(w,\vec t).
\end{equation}
\end{proposition}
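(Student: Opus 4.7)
The plan is to use the identity $S(h(w))=\bar h(1/w)$ from formula \eqref{e-reflection}, which says that $\tilde z$ is the Schwarz-function image of $z$, and to combine this with the explicit Laurent expansion of the Schwarz function at infinity (Proposition \ref{p:expansion of schwarz function}). Since $z=h(w)$ and $\tilde z=\bar h(1/w)$ are Laurent polynomials in $w$ whose coefficients depend on $\vec t$, the Poisson bracket $\{z,\tilde z\}(w)$ is itself a Laurent polynomial in $w$, a priori with powers $w^k$ for $-d-1\le k\le d+1$. The statement therefore reduces to checking that all non-constant coefficients vanish and the constant term equals~$1$. I would verify this by analysing the asymptotic behaviour at $w\to\infty$ and at $w\to 0$ separately.

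For $w\to\infty$: Proposition \ref{p:polCurve} gives that $h$ is biholomorphic from $\{|w|>1\}$ onto the exterior of $\gamma$, on which $S$ is single-valued and analytic; hence the identity $\tilde z(w)=S(z(w))$ holds throughout $\{|w|>1\}$ by analytic continuation from the annular neighbourhood of the unit circle. Treating $S$ as a function of $z$ and $\vec t$ and applying the chain rule to $\tilde z=S(z)$ yields $\partial_w\tilde z=S'(z)\,\partial_w z$ and $\partial_{t_0}\tilde z=(\partial_{t_0}S)(z)+S'(z)\,\partial_{t_0}z$. Substituting into \eqref{e:PoissonBracket} the two $S'(z)$-contributions cancel, leaving
\[
\{z,\tilde z\}(w)=w\,\partial_w z\cdot(\partial_{t_0}S)(z)\qquad\text{on }\{|w|>1\}.
\]
Since the polynomial coefficients $kt_k$, $k\ge 1$, of $S$ are the fixed coordinates on $\mathcal T_d$, the expansion \eqref{e:SchwarzFunctionExpansion} gives $(\partial_{t_0}S)(z)=1/z+\Ord(1/z^2)$ at infinity. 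Combined with $z=rw+\Ord(1)$ and $w\,\partial_w z=rw+\Ord(1/w)$ as $w\to\infty$, this yields $\{z,\tilde z\}(w)=1+\Ord(1/w)$, which forces the positive-power coefficients of the Laurent polynomial to vanish and its constant term to equal~$1$.

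For $w\to 0$ I would apply the same idea to the complex-conjugate curve $\bar\gamma$, parametrised by $\bar h$. By Proposition \ref{p:polCurve} applied to $\bar h$, the map $\bar h$ is biholomorphic from $\{|1/w|>1\}=\{|w|<1\}$ onto the exterior of $\bar\gamma$, so the Schwarz function $T$ of $\bar\gamma$ satisfies $z(w)=T(\tilde z(w))$ on $\{|w|<1\}$; the analogous chain-rule computation produces $\{z,\tilde z\}(w)=-w\,\partial_w\tilde z\cdot(\partial_{t_0}T)(\tilde z)$ there. The area enclosed by $\bar\gamma$ is again $\pi t_0$ because formula \eqref{e:area} is invariant under $a_j\mapsto\bar a_j$, and its exterior harmonic moments equal $\bar t_k$, which are independent of $t_0$. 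Hence Proposition \ref{p:expansion of schwarz function} applied to $\bar\gamma$ gives $(\partial_{t_0}T)(\tilde z)=1/\tilde z+\Ord(1/\tilde z^2)$, and with $\tilde z=r/w+\Ord(1)$, $w\,\partial_w\tilde z=-r/w+\Ord(w)$ at $w\to 0$ one obtains $\{z,\tilde z\}(w)=1+\Ord(w)$, which kills the remaining negative-power coefficients. Together with the previous step, $\{z,\tilde z\}\equiv 1$.

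The main obstacle will be justifying that the Schwarz-function identity extends from the annulus around $|w|=1$ to the full region $\{|w|>1\}$ (respectively $\{|w|<1\}$ in the dual argument); this rests on the biholomorphicity provided by Proposition \ref{p:polCurve} together with the single-valuedness of $S$ on the simply connected exterior of $\gamma$, and on correctly recognising $T$ as the Schwarz function of $\bar\gamma$ with the same enclosed area so that the coefficient of $1/\tilde z$ in the Laurent expansion of $T$ is exactly $t_0$ rather than some other function of $\vec t$.
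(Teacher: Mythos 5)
Your proposal is correct and follows essentially the same route as the paper: write $\tilde z=S(z)$ (respectively $z=\bar S(\tilde z)$ for the conjugate curve), let the chain rule cancel the $S'$-terms in the bracket, and read off $1+\Ord(w^{-1})$ at $w\to\infty$ and $1+\Ord(w)$ at $w\to0$ from the $t_0/z$ term in the Laurent expansion of the Schwarz function, which together pin the Laurent polynomial $\{z,\tilde z\}$ down to the constant $1$. Your explicit identification of $T$ as the Schwarz function of $\bar\gamma$ with the same enclosed area, and your care about the region $\{|w|<1\}$ for the second identity, only make explicit what the paper compresses into ``by analytic continuation in $w$''.
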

\begin{proof}
Let $S(\cdot,\vec t)$ be the Schwarz function of the polynomial curve defined by the set $\vec t\in\mathcal T_d$ of harmonic moments. Then, for $|w|>1$, $\tilde z(w,\vec t)=S(z(w,\vec t),\vec t)$, and we get
\begin{align*}
\{z,\tilde z\}(w,\vec t)&=w\frac{\partial z}{\partial w}(w,\vec t)\left(\frac{\partial S}{\partial t_0}(z(w,\vec t),\vec t)+\frac{\partial S}{\partial z}(z(w,\vec t),\vec t)\frac{\partial z}{\partial t_0}(w,\vec t)\right) \\
&\qquad-w\frac{\partial z}{\partial t_0}(w,\vec t)\frac{\partial S}{\partial z}(z(w,\vec t),\vec t)\frac{\partial z}{\partial w}(w,\vec t)\\ 
&= w\frac{\partial z}{\partial w}(w,\vec t)\frac{\partial S}{\partial t_0}(z(w,\vec t),\vec t),
\end{align*}
whose Laurent series in $w$ around infinity has the form $1+\Ord(w\I)$, as can be seen from Proposition \ref{p:expansion of schwarz function}.

On the other hand, using $z(w,\vec t)=\bar S(\tilde z(w,\vec t),\vec t)$ for $|w|>1$, where we defined $\bar S(z,\vec t)=\overline{S(\bar z,\vec t)}$, we get
\[ \{z,\tilde z\}(w,\vec t)
=-w\frac{\partial\tilde z}{\partial w}(w,\vec t)\frac{\partial\bar S}{\partial t_0}(\tilde z(w,\vec t),\vec t), \]
which has a Laurent series of the form $1+\Ord(w)$.

So, by analytic continuation in $w$, we have $\{z,\tilde z\}=1$ on $\C^\times\times\mathcal T_d$.
\end{proof}

\begin{proposition}\label{p:generatingFunction}
There exists a function $\Omega:\{(z,\vec t)\in\C\times\mathcal T_d\;|\;z\in(D_{\vec t})_-\}\to\C$ with
\begin{equation}\label{e:omegaDef}
S(z(w,\vec t),\vec t)=\frac{\partial\Omega}{\partial z}(z(w,\vec t),\vec t)\quad\textrm{and}\quad\log w=\frac{\partial\Omega}{\partial t_0}(z(w,\vec t),\vec t)\quad\textrm{for}\quad|w|>1,
\end{equation}
where $S(\cdot,\vec t)$ denotes the Schwarz function of the curve $\gamma_{\vec t}$ defined by the harmonic moments $\vec t\in\mathcal T_d$ and $(D_{\vec t})_-$ is the exterior domain of the curve $\gamma_{\vec t}$.

Additionally, every such function has an asymptotic expansion of the form
\begin{equation}\label{e:omegaExpansion}
\Omega(z,\vec t)=\sum_{k=1}^{d+1}t_kz^k+t_0\log z-\frac12v_0(\vec t)-\sum_{k=1}^\infty\frac{v_k(\vec t)}kz^{-k}
\end{equation}
around $z=\infty$, where $(v_k(\vec t))_{k=1}^\infty$ are the interior harmonic moments of $\gamma_{\vec t}$ and, with $z(w,\vec t)=r(\vec t)w+\Ord(1)$ for $w\to\infty$,
\begin{equation}\label{e:differentialv0}
\frac{\partial v_0}{\partial t_0}(\vec t)=2\log(r(\vec t)).
\end{equation}
\end{proposition}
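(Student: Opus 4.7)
The plan is to construct $\Omega$ as a primitive of the $1$-form $\omega=S(z,\vec t)\,\d z+\log w(z,\vec t)\,\d t_0$ on the exterior domain (with $t_1,t_2,\dots$ held fixed), where $w(z,\vec t)$ denotes the inverse of the biholomorphism $w\mapsto z(w,\vec t)$ from Proposition~\ref{p:polCurve}. The two relations in~\eqref{e:omegaDef} are equivalent to $\d\Omega=\omega$, so the heart of the matter is closedness of $\omega$. First I would verify
\[ \frac{\partial S}{\partial t_0}(z,\vec t)=\frac{\partial\log w}{\partial z}(z,\vec t). \]
By the inverse function theorem the right-hand side is $1/(w\,\partial_w z)$. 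Differentiating the defining identity $\tilde z(w,\vec t)=S(z(w,\vec t),\vec t)$ at fixed $w$ yields
\[ \frac{\partial S}{\partial t_0}\bigg|_z=\frac{\partial\tilde z}{\partial t_0}\bigg|_w-\frac{\partial S}{\partial z}\cdot\frac{\partial z}{\partial t_0}\bigg|_w, \]
and, after using $\partial_w\tilde z=\partial_z S\cdot\partial_w z$ on the second term, multiplying through by $w\,\partial_w z$ recasts the right-hand side as the Poisson bracket $\{z,\tilde z\}$ from~\eqref{e:PoissonBracket}. By Proposition~\ref{p:canonical transformation} this equals $1$, which gives the required closedness.

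Next I would define $\Omega(z,\vec t)$ by the line integral of $\omega$ from a base point near $z=\infty$, using the branch of $\log w$ with $w\sim z/r(\vec t)$ at infinity. On the simply connected universal cover of $(D_{\vec t})_-$ this produces a well-defined function satisfying~\eqref{e:omegaDef} by construction. The monodromy $2\pi\i$ of $\log w$ along loops around the curve transfers to a monodromy $2\pi\i t_0$ of $\Omega$, which is exactly the monodromy of the multivalued $t_0\log z$ term appearing in the target expansion~\eqref{e:omegaExpansion}; so no inconsistency arises when viewing $\Omega$ as a single-valued analytic function on the appropriate cover.

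To read off the expansion I would substitute the Laurent series of $S$ from Proposition~\ref{p:expansion of schwarz function} into $\partial_z\Omega=S$ and integrate term-by-term around $z=\infty$, giving
\[ \Omega(z,\vec t)=\sum_{k=1}^{d+1}t_kz^k+t_0\log z-\sum_{k=1}^\infty\frac{v_k(\vec t)}k\,z^{-k}+c(\vec t), \]
with an integration constant $c(\vec t)$ depending only on $\vec t$. Writing $c(\vec t)=-\tfrac12v_0(\vec t)$ is a definition of $v_0$ that reproduces~\eqref{e:omegaExpansion}. Since $t_1,t_2,\dots$ do not depend on $t_0$, differentiating this expansion yields $\partial_{t_0}\Omega=\log z-\tfrac12\partial_{t_0}v_0+\Ord(z\I)$, while the asymptotics $z=r(\vec t)w+\Ord(1)$ gives $\log w=\log z-\log r(\vec t)+\Ord(z\I)$. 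Matching the $z^0$ coefficient of $\partial_{t_0}\Omega=\log w$ forces~\eqref{e:differentialv0}. The one delicate step is the closedness verification; once the Poisson-bracket identity $\{z,\tilde z\}=1$ is invoked, the rest is a direct manipulation of Laurent expansions.
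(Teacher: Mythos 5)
Your proposal is correct and follows essentially the same route as the paper: the closedness of $S\,\d z+\log w\,\d t_0$ is exactly the compatibility relation the paper verifies via $\{z,\tilde z\}=1$ from Proposition~\ref{p:canonical transformation}, and the expansion and the identity \eqref{e:differentialv0} are obtained by the same term-by-term integration of the Schwarz function's Laurent series and matching of the constant term. Your explicit line-integral construction and the remark on the $2\pi\i t_0$ monodromy are a welcome refinement of the paper's terser ``which implies the existence of such a function $\Omega$'', but they do not change the substance of the argument.
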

\begin{proof}
For $|w|>1$ and $\vec t\in\mathcal T_d$, we write $\tilde z(w,\vec t)=S(z(w,\vec t),\vec t)$ and get with Proposition \ref{p:canonical transformation} the compatibility relation
\begin{align*}
\frac{\partial S}{\partial t_0}(z(w,\vec t),\vec t)&=\frac{\partial\tilde z}{\partial t_0}(w,\vec t)-\frac{\partial S}{\partial z}(z(w,\vec t),\vec t)\frac{\partial z}{\partial t_0}(w,\vec t) \\
&=\frac{\partial\tilde z}{\partial t_0}(w,\vec t)-\frac{\frac{\partial\tilde z}{\partial w}(w,\vec t)}{\frac{\partial z}{\partial w}(w,\vec t)}\,\frac{\partial z}{\partial t_0}(w,\vec t)
=\frac1{w\frac{\partial z}{\partial w}(w,\vec t)}
\end{align*}
for the equation system \eqref{e:omegaDef}, which implies the existence of such a function $\Omega$.

For the asymptotic expansion of $\Omega$, we integrate the expansion \eqref{e:SchwarzFunctionExpansion} of the Schwarz function $S$ with respect to $z$ and get with the integration constant $-\frac12v_0(\vec t)$ the equation \eqref{e:omegaExpansion}. Equation \eqref{e:differentialv0} is now implied by the zeroth order in $w$ of the relation
\[ \log w = \frac{\partial\Omega}{\partial t_0}(z(w,\vec t),\vec t) = \log w+\log(r(\vec t))-\frac12\frac{\partial v_0}{\partial t_0}(\vec t)+\Ord(w^{-1}). \qedhere \]
\end{proof}

\begin{proposition}
On $\C^\times\times\mathcal T_d$, we have for $1\le k\le d+1$
\begin{equation}\label{e:toda hierarchy}
\frac{\partial z}{\partial t_k}=\{M_k,z\},\quad\frac{\partial\tilde z}{\partial t_k}=\{M_k,\tilde z\},\quad\frac{\partial z}{\partial\bar t_k}=\{z,\tilde M_k\},\quad \frac{\partial\tilde z}{\partial\bar t_k}=\{\tilde z,\tilde M_k\},
\end{equation}
where the Poisson bracket $\{\cdot,\cdot\}$ is defined by \eqref{e:PoissonBracket} and, with the notation $f_+$, $f_-$ and $f_0$ for the positive, the negative, and the constant part of the Laurent series of $f$ in $w$,
\begin{equation}\label{e:todaHamiltonians}
M_k(w,\vec t)=(z^k(w,\vec t))_++\frac12(z^k(w,\vec t))_0,\quad\tilde M_k(w,\vec t)=(\tilde z^k(w,\vec t))_-+\frac12(\tilde z^k(w,\vec t))_0.
\end{equation}
\end{proposition}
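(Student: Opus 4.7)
The plan is to introduce a candidate Hamiltonian built from the generating function $\Omega$ of Proposition \ref{p:generatingFunction}, derive the Hamiltonian identity from the canonical character of the pair $(z,\tilde z)$, and then identify the candidate with $M_k$ modulo a Poisson-trivial correction. Concretely, for $1\le k\le d+1$ I set
\[ B_k(w,\vec t) = \left.\frac{\partial\Omega}{\partial t_k}\right|_{z\text{ fixed}}\!\!\!(z(w,\vec t),\vec t), \]
which by the expansion \eqref{e:omegaExpansion} is a Laurent series in $w$ whose leading part is $z(w,\vec t)^k$, plus a constant and corrections in negative powers of $z$ coming from the $t_k$-dependence of the interior moments $v_\ell(\vec t)$.

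The first step is to verify $\partial_{t_k}z=\{B_k,z\}$ and $\partial_{t_k}\tilde z=\{B_k,\tilde z\}$. Putting $\omega(w,\vec t)=\Omega(z(w,\vec t),\vec t)$, the defining equations \eqref{e:omegaDef} of $\Omega$ together with the chain rule yield
\[ \partial_w\omega=\tilde z\,\partial_w z,\qquad \partial_{t_0}\omega=\tilde z\,\partial_{t_0}z+\log w,\qquad \partial_{t_k}\omega=\tilde z\,\partial_{t_k}z+B_k. \]
Equating the mixed second partials $\partial_{t_0}\partial_{t_k}\omega=\partial_{t_k}\partial_{t_0}\omega$ and $\partial_w\partial_{t_k}\omega=\partial_{t_k}\partial_w\omega$ will give closed expressions for $\partial_{t_0}B_k$ and $\partial_w B_k$ in terms of $\partial_{t_k}z$, $\partial_{t_k}\tilde z$ and the first partials of $z,\tilde z$ in $(w,t_0)$. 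Inserting these into the Poisson bracket \eqref{e:PoissonBracket}, two of the four resulting terms will cancel and the remainder will collapse to $\partial_{t_k}z\cdot\{z,\tilde z\}=\partial_{t_k}z$ by Proposition \ref{p:canonical transformation}; the same manipulation with $\tilde z$ in place of $z$ will produce $\{B_k,\tilde z\}=\partial_{t_k}\tilde z$.

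To close the argument I must identify $B_k$ with $M_k$ up to a function Poisson-commuting with both $z$ and $\tilde z$. From \eqref{e:omegaExpansion} I read off $B_k=z^k-\tfrac12\partial_{t_k}v_0-\sum_{\ell\ge 1}\tfrac{\partial_{t_k}v_\ell}\ell z^{-\ell}$, so $B_k-M_k$ has no positive $w$-power. Since $\{z,\tilde z\}=1$ makes $(z,\tilde z)$ local Darboux coordinates on the $(w,t_0)$-phase space, the only functions Poisson-commuting with both are those independent of $(w,t_0)$, and so it will suffice to show that $B_k-M_k$ carries no $(w,t_0)$-dependence. This reduces to the two identities $\partial_{t_k}v_0=(z^k)_0$ and $(z^k)_-=\sum_{\ell\ge 1}\tfrac{\partial_{t_k}v_\ell}\ell z^{-\ell}$, which I would extract either by matching coefficients in the Laurent expansion of $\Omega$ at infinity, or equivalently by differentiating the Cauchy-type contour-integral formulas for the $v_\ell$ along the $t_k$-deformation of the curve.

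The $\bar t_k$-equations will then follow by an entirely parallel argument with $\tilde z$ and the expansion of $\Omega$ around $z=0$ in place of $z$ and the expansion around $z=\infty$; the reversed order in the bracket $\{z,\tilde M_k\}$ reflects the sign change of the Poisson bracket under $(w,t_0)\leftrightarrow(w^{-1},t_0)$. The hardest step will be the pair of Hirota-type identities that bridge $B_k$ and $M_k$; once these are in hand, the rest is a bookkeeping of the chain-rule computation made available by the generating function $\Omega$ and the canonical relation $\{z,\tilde z\}=1$.
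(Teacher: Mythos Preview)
Your plan is essentially the paper's own: define the candidate Hamiltonian $B_k=\partial_{t_k}\Omega$ at fixed $z$, use the chain rule together with $\{z,\tilde z\}=1$ to get $\{B_k,z\}=\partial_{t_k}z$ and $\{B_k,\tilde z\}=\partial_{t_k}\tilde z$, and then obtain the $\bar t_k$-equations by the reflection $w\mapsto\bar w^{-1}$. Your mixed-partial bookkeeping via $\omega=\Omega(z(w,\vec t),\vec t)$ is just a repackaging of the paper's direct computation and is fine.

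The genuine difference is in the identification step. You propose to force $B_k-M_k$ to be $(w,t_0)$-independent by proving the two ``Hirota-type'' relations $(z^k)_-=\sum_{\ell\ge1}\frac{\partial_{t_k}v_\ell}{\ell}z^{-\ell}$ and $(z^k)_0=\partial_{t_k}v_0$ directly from contour integrals for the interior moments. The paper takes a shorter route: from the formula you yourself derive, $\partial_wB_k=\partial_wz\,\partial_{t_k}\tilde z-\partial_{t_k}z\,\partial_w\tilde z$, one substitutes $z=\bar S(\tilde z,\vec t)$ and gets $\partial_wB_k=-\partial_w\tilde z\cdot\partial_{t_k}\bar S(\tilde z,\vec t)$, whence $(\partial_wB_k)_-=0$ immediately; this is exactly your first identity. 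For the constant part, the paper computes $(\partial_{t_0}B_k)_0$ via $z=\bar S(\tilde z)$, matches it with $\tfrac12\partial^2_{t_0t_k}v_0$ through \eqref{e:differentialv0}, and then absorbs the remaining $t_0$-independent discrepancy into the undetermined $t_0$-free part of $v_0$. Note that your identity $(z^k)_0=\partial_{t_k}v_0$ holds only after this normalisation; you should make that explicit. In short, your proposal is correct but makes the hardest step harder than necessary---the Schwarz-function substitution already available to you gives both identities without an independent contour-integral argument.
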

\begin{proof}
We choose a function $\Omega$ as in Proposition \ref{p:generatingFunction} and define for $|w|>1$ and $\vec t\in\mathcal T_d$
\[ M_k(w,\vec t)=\frac{\partial\Omega}{\partial t_k}(z(w,\vec t),\vec t). \]
Considering now only $|w|>1$, we get, using $\frac{\partial\Omega}{\partial t_0}(z(w,\vec t),\vec t)=\log w$,
\begin{align*}
\{M_k,z\}(w,\vec t)&=-w\frac{\partial z}{\partial w}(w,\vec t)\frac{\partial^2\Omega}{\partial t_k\partial t_0}(z(w,\vec t),\vec t) \\
&=-w\frac{\partial z}{\partial w}(w,\vec t)\left(\frac\partial{\partial t_k}\left(\frac{\partial\Omega}{\partial t_0}(z(w,\vec t),\vec t)\right)-\frac{\partial^2\Omega}{\partial z\partial t_0}(z(w,\vec t),\vec t)\frac{\partial z}{\partial t_k}(w,\vec t)\right) \\
&= \frac{\partial z}{\partial t_k}(w,\vec t).
\end{align*}
Similarly, we have, using additionally $\tilde z(w,\vec t)=\frac{\partial\Omega}{\partial z}(z(w,\vec t),\vec t)$ and $\{z,\tilde z\}=1$,
\begin{align*}
\{M_k,\tilde z\}(w,\vec t)&=\frac{\partial^2\Omega}{\partial z\partial t_k}(z(w,\vec t),\vec t)\,\{z,\tilde z\}(w,\vec t)-w\frac{\partial\tilde z}{\partial w}(w,\vec t)\frac{\partial^2\Omega}{\partial t_0\partial t_k}(z(w,\vec t),\vec t) \\
&=\frac\partial{\partial t_k}\left(\frac{\partial\Omega}{\partial z}(z(w,\vec t),\vec t)\right)-\frac{\partial z}{\partial t_k}(w,\vec t)\frac{\partial^2\Omega}{\partial z^2}(z(w,\vec t),\vec t) \\
&\qquad-w\frac{\partial\tilde z}{\partial w}(w,\vec t)\left(\frac\partial{\partial t_k}\left(\frac{\partial\Omega}{\partial t_0}(z(w,\vec t),\vec t)\right)-\frac{\partial z}{\partial t_k}(w,\vec t)\frac{\partial^2\Omega}{\partial z\partial t_0}(z(w,\vec t),\vec t)\right) \\
&=\frac{\partial\tilde z}{\partial t_k}(w,\vec t).
\end{align*}
Solving these two equations for $\frac{\partial M_k}{\partial w}$, we find, again using $\{z,\tilde z\}=1$, 
\[ \frac{\partial M_k}{\partial w}=\frac{\partial z}{\partial w}\frac{\partial\tilde z}{\partial t_k}-\frac{\partial z}{\partial t_k}\frac{\partial\tilde z}{\partial w}. \]
With $z(w,\vec t)=\bar S(\tilde z(w,\vec t),\vec t)$, where as before $\bar S(z,\vec t)=\overline{S(\bar z,\vec t)}$, we then have
\[ \left(\frac{\partial M_k}{\partial w}(w,\vec t)\right)_-=\left(-\frac{\partial\tilde z}{\partial w}(w,\vec t)\frac{\partial\bar S}{\partial t_k}(\tilde z(w,\vec t),\vec t)\right)_-=0. \]
Therefore, substituting the asymptotic expansion \eqref{e:omegaExpansion} for $\Omega$,
\[ M_k(w,\vec t)=(z^k(w,\vec t))_++(z^k(w,\vec t))_0-\frac12\frac{\partial v_0}{\partial t_k}(\vec t). \] 
Considering now the derivative by $t_0$, we find as before
\begin{align*}
\frac{\partial M_k}{\partial t_0}(w,\vec t)&=\frac{\partial z}{\partial t_0}(w,\vec t)\frac{\partial\tilde z}{\partial t_k}(w,\vec t)-\frac{\partial z}{\partial t_k}(w,\vec t)\frac{\partial\tilde z}{\partial t_0}(w,\vec t) \\
&=\frac{\partial\bar S}{\partial t_0}(\tilde z(w,\vec t),\vec t)\frac{\partial\tilde z}{\partial t_k}(w,\vec t)-\frac{\partial\bar S}{\partial t_k}(\tilde z(w,\vec t),\vec t)\frac{\partial\tilde z}{\partial t_0}(w,\vec t).
\end{align*}
Writing $\tilde z(w,\vec t)=r(\vec t)w^{-1}+\Ord(1)$ for $w\to0$, we see
\[ \frac{\partial M_k}{\partial t_0}(w,\vec t)=\frac1{r(\vec t)}\frac{\partial r}{\partial t_k}(\vec t)+\Ord(w)=\frac12\frac{\partial^2v_0}{\partial t_0\partial t_k}(\vec t)+\Ord(w). \]
Therefore, using the freedom of choosing the $t_0$-independent part of $v_0$, we achieve
\[ M_k(w,\vec t)=(z^k(w,\vec t))_++\frac12(z^k(w,\vec t))_0. \]
By analytic continuation in $w$, we see that $M_k$, defined by \eqref{e:todaHamiltonians}, fulfils the relations \eqref{e:toda hierarchy} for all $w\in\C^\times$ and $\vec t\in\mathcal T_d$.

Setting now 
\[ \tilde M_k(w,\vec t) = \overline{M_k(\bar w\I,\vec t)}, \]
and using $\tilde z(w,\vec t) = \overline{z(\bar w\I,\vec t)}$, we get the corresponding relations for $\tilde M_k$.
\end{proof}

Therefore, the functions $z$ and $\tilde z$ are a solution of the dispersionless Toda lattice hierarchy in the first $d+1$ time variables $(t_k)_{k=1}^{d+1}$ fulfilling additionally the so called dispersionless string equation $\{z,\tilde z\}=1$.    

Furthermore, because of the additional restrictions $\tilde t_k=\bar t_k$, $1\le k\le d+1$, and $\tilde z(w,\vec t) = \overline{z(\bar w\I,\vec t)}$, $(w,\vec t)\in\C^\times\times\mathcal T_d$, compared to Definition \ref{d:dispersionlessTH} of the dispersionless Toda lattice hierarchy, the equations for the time variables $(\tilde t_k)_{k=1}^{d+1}$ are equivalent to those for the variables $(t_k)_{k=1}^{d+1}$.



\section{The eigenvalue density in the continuum limit}
\subsection{The equilibrium measure}
We now turn our attention to the limit $N\to\infty$.
To describe a configuration in this limit, it is useful to pass from the sequence of eigenvalues to measures. So let us introduce the point measures
\[ \delta_z(A)=\chi_A(z),\;z\in\C,\,A\subset\C,\quad\textrm{and}\quad \delta_{\vec z} = \frac1N\sum_{i=1}^N\delta_{z_i},\;\vec z=(z_i)_{i=1}^N\subset\C, \]
where $\chi_A$ denotes the characteristic function of the set $A\subset\C$. 
Then, in the domain $D_0^N$, the probability distribution $P_N$ may be written as
\[ P_N(\vec z)=\frac1{Z_N}\e^{-N^2I_V(\delta_{\vec z})}, \]
where we introduced the functional
\[ I_V(\mu)=\int V(z)\d\mu(z)+\iint_{z\ne\zeta}\log|z-\zeta|\I\d\mu(\zeta)\d\mu(z) \]
on the space of all Borel probability measures. (Again, we will drop the index indicating which potential we use.)

\begin{lemma}\label{l:compactness}
The space of all Borel probability measures on $D$ is sequentially compact.
\end{lemma}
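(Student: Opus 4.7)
The plan is to prove this as a standard application of the Banach–Alaoglu theorem, viewing Borel probability measures on the compact set $D\subset\C$ as elements of the topological dual $C(D)^*$ via the Riesz representation theorem. The topology under consideration is the weak topology (equivalently weak-$*$ as functionals on $C(D)$): $\mu_n\to\mu$ iff $\int f\d\mu_n\to\int f\d\mu$ for every $f\in C(D)$.

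First I would embed the space of Borel probability measures on $D$ into the closed unit ball of $C(D)^*$: each probability measure $\mu$ defines a continuous linear functional $f\mapsto\int f\d\mu$ of norm one (attained on the constant function $1$). Banach–Alaoglu then yields that the closed unit ball of $C(D)^*$ is compact in the weak-$*$ topology. Since $D$ is a compact metric space, $C(D)$ is separable (e.g.\ by the Stone–Weierstrass theorem applied to polynomials in the coordinates, or to any countable dense family in $C(D)$), and separability of the predual implies that the weak-$*$ topology on the unit ball is metrizable. Hence compactness coincides with sequential compactness on this ball.

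It remains to show that the space of probability measures is sequentially closed inside the unit ball. Given a sequence $(\mu_n)$ of Borel probability measures on $D$ and a weak-$*$ limit point $\Lambda\in C(D)^*$ of a subsequence $(\mu_{n_k})$, the functional $\Lambda$ is positive (because $\Lambda(f)=\lim\mu_{n_k}(f)\ge 0$ for $f\ge0$) and satisfies $\Lambda(1)=\lim\mu_{n_k}(1)=1$. By the Riesz representation theorem, $\Lambda$ is induced by a unique Borel measure $\mu$ on $D$ with $\mu(D)=1$, i.e.\ a probability measure.

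There is no real obstacle here; the only point requiring a moment's care is the metrizability statement, which is what converts the abstract compactness given by Banach–Alaoglu into the sequential compactness needed in the sequel. An alternative, entirely self-contained route would be a direct diagonal argument: pick a countable dense set $\{f_j\}\subset C(D)$, extract by diagonalization a subsequence along which $\int f_j\d\mu_{n_k}$ converges for every $j$, and define $\Lambda$ on all of $C(D)$ by uniform approximation using $\|\mu_n\|=1$. Either route delivers the conclusion.
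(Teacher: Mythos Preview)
Your proof is correct and follows essentially the same route as the paper's own argument: identify measures with functionals via Riesz--Markov, apply Banach--Alaoglu to the unit ball of $C(D)^*$, and check that the weak-$*$ limit is again positive and normalized. You are in fact more careful than the paper on one point: you explicitly invoke separability of $C(D)$ to get metrizability of the unit ball, which is precisely what converts weak-$*$ compactness into sequential compactness, whereas the paper simply asserts the existence of a convergent subsequence.
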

\begin{proof}
By the theorem of Riesz-Markov each Borel measure $\mu$ on $D$ corresponds to exactly one positive, linear functional $\phi_\mu\in C(D)^*$ and by the theorem of Alaoglu, the closed unit-sphere in $C(D)^*$ is weak-*-compact. Therefore, for each sequence $(\mu_n)_{n=1}^\infty$ of Borel probability measures, the sequence $(\phi_{\mu_n})_{n=1}^\infty$ contains a weak-*-convergent subsequence $(\phi_{\mu_{n(k)}})_{k=1}^\infty$, that is
\[ \exists\,\phi\in C(D)^*\,:\;\phi_{\mu_{n(k)}}(f) \to \phi(f)\quad\forall f\in C(D). \]
Now we find a measure $\mu$ on $D$ with $\phi=\phi_\mu$. This measure fulfils
\[ \int f\d\mu_{n(k)}\to\int f\d\mu\quad(k\to\infty)\quad\forall f\in C(D), \]
and hence, is again a Borel probability measure.
\end{proof}

\begin{lemma}\label{l-integrability}
Let $\mu$ and $\tilde\mu$ be Borel probability measures such that the function $\log|z-\zeta|\I$ is integrable with respect to $\mu\otimes\mu$ and to $\tilde\mu\otimes\tilde\mu$. Then $\log|z-\zeta|\I$ is also integrable with respect to $\mu\otimes\tilde\mu$.

Additionally, we have the inequality
\begin{equation}\label{e-log}
\iint\log|z-\zeta|\I\d(\tilde\mu-\mu)(\zeta)\d(\tilde\mu-\mu)(z)\ge 0
\end{equation}
with equality if and only if $\mu=\tilde\mu$.
\end{lemma}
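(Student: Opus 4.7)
The plan is to reduce both assertions to the positive-definiteness of the Gaussian kernel $(z,\zeta)\mapsto\e^{-s|z-\zeta|^2}$ on $\C$, via the subordination identity
\[ -\log a=\int_0^\infty\frac{\e^{-sa}-\e^{-s}}{s}\,\d s\quad\text{for}\quad a>0, \]
and then to pass to the limit by monotone convergence. For this I would introduce, for $\epsilon>0$, the regularisation
\[ U_\epsilon(z,\zeta)=-\frac12\log\bigl(|z-\zeta|^2+\epsilon^2\bigr), \]
which is continuous on $\C\times\C$, uniformly bounded on the compact set $D\times D$ for $\epsilon\in(0,1]$, and increases pointwise on $\{z\ne\zeta\}$ to $\log|z-\zeta|\I$ as $\epsilon\downarrow0$.

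The first step is the $U_\epsilon$-energy inequality. Rewriting the subordination identity gives
\[ U_\epsilon(z,\zeta)=\frac12\int_0^\infty\frac{\e^{-s\epsilon^2}\bigl(\e^{-s|z-\zeta|^2}-1\bigr)}s\,\d s-\frac12\log\epsilon^2, \]
whose first integrand is of order $s\e^{-s\epsilon^2}$ near $s=0$ and of order $\e^{-s\epsilon^2}/s$ at $s=\infty$, hence absolutely integrable in $s$ uniformly in $(z,\zeta)\in D\times D$. Fubini therefore applies, and for any finite signed measure $\nu$ of compact support with $\nu(\C)=0$ the constant part is annihilated and
\[ \iint U_\epsilon(z,\zeta)\,\d\nu(z)\,\d\nu(\zeta)=\frac12\int_0^\infty\frac{\e^{-s\epsilon^2}}s\iint\e^{-s|z-\zeta|^2}\,\d\nu(z)\,\d\nu(\zeta)\,\d s. \]
Plancherel identifies the inner double integral with a non-negative multiple of $\int\e^{-|\xi|^2/(4s)}|\hat\nu(\xi)|^2\dd\xi$, so $\iint U_\epsilon\,\d\nu\,\d\nu\ge 0$.

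Applying this inequality to $\nu=\tilde\mu-\mu$ and rearranging yields
\[ 2\iint U_\epsilon\,\d\mu\,\d\tilde\mu\;\le\;\iint U_\epsilon\,\d\mu\,\d\mu+\iint U_\epsilon\,\d\tilde\mu\,\d\tilde\mu. \]
Since $U_\epsilon$ is bounded below on $D\times D$ uniformly in $\epsilon\in(0,1]$ and increases pointwise to $\log|z-\zeta|\I$, monotone convergence applies separately to the three integrals: the right-hand side tends, by the hypothesis, to the finite number $\iint\log|z-\zeta|\I\,\d\mu\,\d\mu+\iint\log|z-\zeta|\I\,\d\tilde\mu\,\d\tilde\mu$, so the monotone limit of the left-hand side, which equals $2\iint\log|z-\zeta|\I\,\d\mu\,\d\tilde\mu$, is also finite. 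This is exactly the claimed integrability. Once it is in hand, a further term-by-term application of monotone convergence gives
\[ \iint\log|z-\zeta|\I\,\d(\tilde\mu-\mu)(\zeta)\,\d(\tilde\mu-\mu)(z)=\lim_{\epsilon\downarrow0}\iint U_\epsilon\,\d(\tilde\mu-\mu)\,\d(\tilde\mu-\mu)\ge 0, \]
which proves \eqref{e-log}.

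For the equality case, vanishing of the right-hand side of the preceding display forces the $s$-integrand of the first step to vanish for almost every $s>0$, so $\hat\nu\equiv 0$ on $\C$, and Fourier uniqueness for finite signed measures yields $\nu=0$, i.e.\ $\mu=\tilde\mu$. The main technical obstacle is that the raw subordination integrand is not $L^1(\d s)$ pointwise in $(z,\zeta)$ because of the $1/s$ singularity at $s=0$; the way around this is precisely to split off the constant $-\frac12\log\epsilon^2$ (which is killed by any zero-mass signed measure), leaving a uniformly $s$-integrable remainder to which Fubini and the positivity of the Gaussian can safely be applied.
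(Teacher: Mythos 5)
Your proof is correct, and it takes a genuinely different route from the paper's. The paper works directly with a Fourier representation of the logarithmic kernel: starting from $\int_\C\log|z|\I\triangle\varphi(z)\dd z=-2\pi\varphi(0)$ it writes $\log|z|\I=\frac1{2\pi}\int_\C|k|^{-2}\bigl(\e^{\frac\i2(kz+\bar k\bar z)}-f(k)\bigr)\dd k+C(f)$ for a cut-off function $f$, notes that the $f$-term and the constant are annihilated by the zero-mass signed measure $\nu=\tilde\mu-\mu$, and obtains the energy as $\frac1{2\pi}\int_\C|k|^{-2}|\hat\nu(k)|^2\dd k\ge0$ in one stroke via Tonelli, with finiteness of that integral yielding the cross-integrability. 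You instead regularise the kernel by $U_\epsilon=-\frac12\log(|z-\zeta|^2+\epsilon^2)$, express $U_\epsilon$ (up to a constant killed by $\nu$) as a superposition of Gaussians via the Frullani identity, use positive-definiteness of the Gaussian to get $\iint U_\epsilon\d\nu\d\nu\ge0$ at each fixed $\epsilon$, and then pass to the limit by monotone convergence --- which also delivers the cross-integrability cleanly by rearranging the $\epsilon$-level inequality into $2\iint U_\epsilon\d\mu\d\tilde\mu\le\iint U_\epsilon\d\mu\d\mu+\iint U_\epsilon\d\tilde\mu\d\tilde\mu$. Both arguments ultimately rest on the same fact (the energy is a positive weight integrated against $|\hat\nu|^2$), but yours trades the paper's somewhat delicate distributional manipulation and renormalising function $f$ for an elementary smoothing parameter plus MCT; your handling of the $1/s$ singularity (versus the paper's $1/|k|^2$) is more transparent, and your equality case --- $\iint\e^{-s|z-\zeta|^2}\d\nu\d\nu=0$ for a.e.\ $s$, hence $\hat\nu\equiv0$, hence $\nu=0$ --- is the exact analogue of the paper's. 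Two harmless remarks: near $s=0$ your integrand behaves like $-|z-\zeta|^2$, i.e.\ it is $\Ord(1)$ rather than $\Ord(s\,\e^{-s\epsilon^2})$, though integrability is unaffected; and, like the paper's proof, your argument uses that the measures live on the compact cut-off $D$ (uniform lower bound for $U_\epsilon$, uniform $s$-integrability), which is consistent with how the lemma is applied throughout the paper.
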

\begin{proof}
We start with the distributional identity
\[ \int_\C\log|z|\I\triangle\varphi(z)\dd z = -2\pi\varphi(0) \]
for any smooth function $\varphi:\C\to\R$ with compact support.
Introducing the Fourier transform 
\[ \hat\varphi(k)=\frac1{2\pi}\int_\C\varphi(z)\e^{\frac\i 2(kz+\bar k\bar z)}\dd z,\quad k\in\C, \]
of $\varphi$, we find
\begin{align*}
\int_\C\log|z|\I\triangle\varphi(z)\dd z &= -\int_\C\frac{1}{|k|^2}(|k|^2\hat\varphi(k))\dd k \\
&= \frac1{2\pi}\int_\C\frac{1}{|k|^2}\int_\C\triangle\varphi(z)\left(\e^{\frac{\i}{2}(kz+\bar k\bar z)}-f(k)\right)\!\dd z\dd k \\
&= \frac1{2\pi}\iint_{\C^2}\frac{1}{|k|^2}\left(\e^{\frac{\i}{2}(kz+\bar k\bar z)}-f(k)\right)\!\dd k\,\triangle\varphi(z)\dd z,
\end{align*}
where $f:\C\to[0,1]$ denotes a continuous function which is one in the vicinity of zero and becomes zero at infinity. So, we have for all $z\in\C^\times$ the equation
\[ \log|z|\I = \frac{1}{2\pi}\int_\C\frac{1}{|k|^2}\left(\e^{\frac{\i}{2}(kz+\bar k\bar z)}-f(k)\right)\!\dd k+C(f) \]
for some real constant $C(f)$.

Therefore, with Tonelli's theorem, we see that
\[ \iint\log|z-\zeta|\I\d(\tilde\mu-\mu)(\zeta)\d(\tilde\mu-\mu)(z)=\frac1{2\pi}\int_\C\frac1{|k|^2}\left|\int\e^{\frac{\i}{2}(kz+\bar k\bar z)}\d(\tilde\mu-\mu)(z)\right|^2\dd k \]
is non-negative and finite, which immediately implies the integrability of \mbox{$\log|z-w|\I$} with respect to $\mu\otimes\tilde\mu$.
To achieve equality in \eqref{e-log}, we need
\[ \int\e^{\frac{\i}{2}(kz+\bar k\bar z)}\d\mu(z) = \int\e^{\frac{\i}{2}(kz+\bar k\bar z)}\d\tilde\mu(z) \]
for all $k\in\C$, which reads $\mu=\tilde\mu$.
\end{proof}

\begin{definition}
A Borel probability measure $\mu_0$ on $D$ without point masses is called an equilibrium measure for the potential $V$ on $D\subset\C$ if 
\[ I(\mu_0)=\inf_{\mu\in\mathcal M(D)}I(\mu), \]
where $\mathcal M(D)$ denotes the set of all Borel probability measures on $D$ without point masses. We then set $I_0=I(\mu_0)$.
\end{definition}

Since $I(\frac1{\lambda_D(D)}\lambda_D)<\infty$, where $\lambda_D$ denotes the Lebesgue measure on $D$, $I_0$ is finite.

\begin{theorem}
Every continuous potential $V$ on a compact domain $D$ has a unique equilibrium measure.
\end{theorem}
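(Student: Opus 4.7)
The plan is to prove existence by the direct method of the calculus of variations and uniqueness by the strict convexity inequality of Lemma \ref{l-integrability}. For existence, I would first consider the auxiliary functional
\[ \tilde I(\mu)=\int V\,d\mu+\iint_{D\times D}\log|z-\zeta|\I\,d\mu(\zeta)\,d\mu(z) \]
on the space of \emph{all} Borel probability measures on $D$, where the double integral is now taken over the full product and is allowed to equal $+\infty$. This functional agrees with $I$ on $\mathcal M(D)$, because the diagonal carries zero $\mu\otimes\mu$-mass for any non-atomic measure, and it equals $+\infty$ on any measure with an atom, since $\mu\otimes\mu$ then assigns positive mass to a point of the diagonal on which the integrand is $+\infty$. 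Next I would show that $\tilde I$ is weak-* lower semi-continuous: the linear term is continuous as $V$ is continuous; for the quadratic term, compactness of $D$ makes $\log|z-\zeta|\I$ bounded below and equal to the pointwise increasing limit of the bounded continuous truncations $f_M(z,\zeta)=\min\{M,\log|z-\zeta|\I\}$, and since $\iint f_M\,d\mu\otimes d\mu$ is continuous under weak-* limits for each fixed $M$ (the convergence $\mu_n\otimes\mu_n\to\mu\otimes\mu$ follows from Stone--Weierstrass), monotone convergence in $M$ upgrades this to lower semi-continuity of the full double integral. Since the normalised Lebesgue measure on $D$ has finite energy, $I_0<\infty$; Lemma \ref{l:compactness} applied to a minimising sequence $(\mu_n)\subset\mathcal M(D)$ then produces a weak-* accumulation point $\mu_0$ with $\tilde I(\mu_0)\le I_0<\infty$. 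Finiteness rules out atoms, so $\mu_0\in\mathcal M(D)$ and $I(\mu_0)=\tilde I(\mu_0)=I_0$.

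For uniqueness, suppose $\mu_0,\tilde\mu_0\in\mathcal M(D)$ both attain $I_0$. Expanding the double integral bilinearly yields the parallelogram identity
\[ I\!\left(\tfrac12(\mu_0+\tilde\mu_0)\right)=\tfrac12 I(\mu_0)+\tfrac12 I(\tilde\mu_0)-\tfrac14\iint\log|z-\zeta|\I\,d(\mu_0-\tilde\mu_0)(\zeta)\,d(\mu_0-\tilde\mu_0)(z), \]
and $\tfrac12(\mu_0+\tilde\mu_0)\in\mathcal M(D)$. Finiteness of $I(\mu_0),I(\tilde\mu_0)$ together with the lower bound on the integrand ensures the integrability hypothesis of Lemma \ref{l-integrability} is met. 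Combining $I(\tfrac12(\mu_0+\tilde\mu_0))\ge I_0$ with inequality \eqref{e-log} forces the final double integral to vanish, and the equality clause of Lemma \ref{l-integrability} then gives $\mu_0=\tilde\mu_0$.

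The main obstacle I expect is establishing the weak-* lower semi-continuity in the face of the logarithmic singularity; the monotone-truncation device above is the standard way around this, and it has the convenient byproduct of making transparent why any minimiser of $\tilde I$ must be non-atomic and hence a legitimate element of $\mathcal M(D)$. Everything else is routine.
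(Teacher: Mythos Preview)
Your proposal is correct and follows essentially the same route as the paper: existence via weak-* compactness (Lemma~\ref{l:compactness}) together with a monotone truncation $\min\{\log|z-\zeta|\I,L\}$ and Stone--Weierstra\ss\ to pass to the limit, which simultaneously forces the minimiser to be atom-free; uniqueness via the strict positive-definiteness of the logarithmic energy (Lemma~\ref{l-integrability}). The only cosmetic differences are that the paper carries out the truncation estimate inline rather than phrasing it as lower semi-continuity of an extended functional $\tilde I$, and for uniqueness it expands $I(\mu_t)$ along the whole segment $t\in[0,1]$ to exhibit convexity, whereas you use the equivalent midpoint parallelogram identity.
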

\begin{proof}
To show the infimum is achieved, we choose a sequence $(\mu_n)_{n=1}^\infty$ in $\mathcal M(D)$ with $I(\mu_n)\to I_0$. 
Because of Lemma \ref{l:compactness}, there exists a convergent subsequence $(\mu_{n(k)})_{k=1}^\infty$ of $(\mu_n)_{n=1}^\infty$ and a Borel probability measure $\mu$ with $\mu_{n(k)}\to\mu$.

To prove that $I(\mu)=I_0$, we estimate with an arbitrary real constant $L$
\begin{align*}
\lim_{k\to\infty}I(\mu_{n(k)}) &= \lim_{k\to\infty}\int V(z)\d\mu_{n(k)}(z)+\lim_{k\to\infty}\iint\log|z-\zeta|\I\d\mu_{n(k)}(\zeta)\d\mu_{n(k)}(z) \\
&\ge\int V(z)\d\mu(z)+\lim_{k\to\infty}\iint\min\{\log|z-\zeta|\I,L\}\d\mu_{n(k)}(\zeta)\d\mu_{n(k)}(z).
\end{align*}
Approximating uniformly the second integrand according to the theorem of Stone-Weier\-stra\ss\ up to some $\varepsilon>0$ with a polynomial in $z$, $\bar z$, $\zeta$ and $\bar\zeta$ and using Fubini's theorem, we get
\[ I_0=\lim_{k\to\infty}I(\mu_{n(k)})\ge\int V(z)\d\mu(z)+\iint\min\{\log|z-\zeta|\I,L\}\d\mu(\zeta)\d\mu(z)-2\varepsilon. \]
Letting first $\varepsilon\to0$ and then $L\to\infty$ shows that $\mu$ has no point masses (otherwise the right hand side would diverge) and $I(\mu)=I_0$.

Next we want to show that there is exactly one measure $\mu\in\mathcal M(D)$ with $I(\mu)=I_0$. So suppose $\tilde\mu\in\mathcal M(D)$ fulfils $I(\tilde\mu)=I_0$, too. Then we consider the family 
\[ \mu_t = t\tilde\mu+(1-t)\mu = \mu+t(\tilde\mu-\mu),\quad t\in[0,1], \]
in $\mathcal M(D)$, expand with regard to Lemma \ref{l-integrability} the functional $I(\mu_t)$, and obtain
\begin{equation}
\begin{split}\label{e-I expanded}
I(\mu_t) &= I(\mu)+t\int\left(V(z)+2\int\log|z-\zeta|\I\d\mu(\zeta)\right)\d(\tilde\mu-\mu)(z) \\
&\qquad\qquad+t^2\iint\log|z-\zeta|\I\d(\tilde\mu-\mu)(\zeta)\d(\tilde\mu-\mu)(z). 
\end{split}
\end{equation}
Lemma \ref{l-integrability} now states that the coefficient of $t^2$ is non-negative, and so the function $t\mapsto I(\mu_t)$ is convex on $[0,1]$. In particular, for all $t\in[0,1]$,
\[ I(\mu_t) \le tI(\tilde\mu)+(1-t)I(\mu) = I_0, \]
which implies $I(\mu_t)=I_0$ for all $t\in[0,1]$. This requires the last summand in \eqref{e-I expanded} to vanish, and so, again with Lemma \ref{l-integrability}, we see that $\mu=\tilde\mu$.
\end{proof}

\begin{proposition}\label{p-variational principle}
The probability measure $\mu$ is the equilibrium measure for the potential $V$ on the domain $D$ if and only if the function
\begin{equation}\label{e-E}
E(z)=V(z)+2\int\log|z-\zeta|\I\d\mu(\zeta)
\end{equation}
fulfils the relation
\begin{equation}
\int E(z)\d\tilde\mu(z)\ge\int E(z)\d\mu(z)\quad\textrm{for all $\tilde\mu\in\mathcal M(D)$.} \label{e-variation 1}
\end{equation}
For the equilibrium measure $\mu$ and $E_0=\int E(z)\d\mu(z)$, we additionally have
\begin{equation}
E(z)=E_0\quad\textrm{$\mu$-almost everywhere.}\label{e-variation 2}
\end{equation}
\end{proposition}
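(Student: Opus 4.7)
The plan is to deduce both the variational inequality \eqref{e-variation 1} and the $\mu$-a.e.\ equality \eqref{e-variation 2} directly from the quadratic expansion \eqref{e-I expanded} that was established in the previous proof. Observe that the coefficient of $t$ in that expansion is precisely $\int E(z)\d(\tilde\mu-\mu)(z)$, while Lemma \ref{l-integrability} asserts that the coefficient of $t^2$ is non-negative. Thus for every $\tilde\mu\in\mathcal M(D)$ we have the identity
\begin{equation}\label{e:expansionE}
I(\mu_t)-I(\mu)=t\int E(z)\d(\tilde\mu-\mu)(z)+t^2Q(\tilde\mu-\mu),\quad t\in[0,1],
\end{equation}
with $Q(\tilde\mu-\mu)\ge0$.

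First I would treat the equivalence. If $\mu$ is the equilibrium measure then $I(\mu_t)\ge I(\mu)$ for all $t\in[0,1]$ and all $\tilde\mu\in\mathcal M(D)$; dividing \eqref{e:expansionE} by $t>0$ and letting $t\to0^+$ gives $\int E\d(\tilde\mu-\mu)\ge0$, which is \eqref{e-variation 1}. Conversely, if \eqref{e-variation 1} holds, then setting $t=1$ in \eqref{e:expansionE} yields $I(\tilde\mu)-I(\mu)=\int E\d(\tilde\mu-\mu)+Q(\tilde\mu-\mu)\ge 0$ for every $\tilde\mu\in\mathcal M(D)$, so $\mu$ minimises $I$ and is the equilibrium measure by uniqueness.

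For the $\mu$-almost everywhere statement, I would argue by contradiction using restriction-and-normalisation. Since $E$ is bounded below on $D$ (the potential $V$ is continuous and $\zeta\mapsto\log|z-\zeta|\I$ is bounded below by $-\log(\mathrm{diam}\,D)$), the integral $\int E\d\mu=E_0$ is well defined. Suppose $\mu(A)>0$ for the Borel set $A=\{z\in D\,:\,E(z)<E_0-\varepsilon\}$ for some $\varepsilon>0$. Define the probability measure $\tilde\mu=\mu(A)\I\mu|_A$. Since $\mu$ has no point masses, neither does $\tilde\mu$, so $\tilde\mu\in\mathcal M(D)$, and $\int E\d\tilde\mu\le E_0-\varepsilon<E_0=\int E\d\mu$, contradicting \eqref{e-variation 1}. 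Hence $E\ge E_0$ holds $\mu$-a.e.; combined with $\int E\d\mu=E_0$ this forces $E=E_0$ $\mu$-almost everywhere.

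The main delicacy I anticipate is ensuring that all manipulations with $E$ and the double integrals are justified. In particular, one must check that Lemma \ref{l-integrability} applies to $\mu_t$ (which it does because $\mu_t$ is a convex combination of $\mu$ and $\tilde\mu$ and the logarithmic energies of both are finite by the equilibrium/minimising property), and that the restricted and renormalised measure $\mu(A)\I\mu|_A$ genuinely lives in $\mathcal M(D)$; the latter is where the hypothesis that equilibrium measures have no point masses is used. Once these integrability points are in place, the argument is essentially a first-order variation combined with strict convexity supplied by Lemma \ref{l-integrability}.
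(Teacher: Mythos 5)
Your proposal is correct and follows essentially the same route as the paper: the first-order variation of the quadratic expansion of $I(\mu_t)$ gives the equivalence (forward direction by letting $t\to0^+$, converse by taking $t=1$ and using non-negativity of the quadratic term from Lemma \ref{l-integrability}), and the restriction-and-normalisation argument on the set where $E<E_0$ gives \eqref{e-variation 2}. The only cosmetic differences are your use of the $\varepsilon$-sets $\{E<E_0-\varepsilon\}$ where the paper works directly with $\{E<E_0\}$, and your explicit mention of the final step combining $E\ge E_0$ $\mu$-a.e.\ with $\int E\d\mu=E_0$, which the paper leaves implicit.
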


\begin{proof}
Let us first assume $\mu$ is the equilibrium measure. Then, for an arbitrary measure $\tilde\mu\in\mathcal M(D)$, the condition
\[ \frac{\d}{\d t}I(\mu_t)\big|_{t=0} \ge 0,\quad\mu_t=t\tilde\mu+(1-t)\mu,\quad t\in[0,1], \]
has to hold. This means
\[ \int\left(V(z)+2\int\log|z-\zeta|\I\d\mu(\zeta)\right)\d(\tilde\mu-\mu)(z)\ge 0, \]
which immediately implies equation (\ref{e-variation 1}).

Assume on the other side that $\mu$ fulfils the condition (\ref{e-variation 1}). Then we obtain for the equilibrium measure $\mu_0$
\begin{align*}
I(\mu_0) &= I(\mu)+\int\left(V(z)+2\int\log|z-\zeta|\I\d\mu(\zeta)\right)\d(\mu_0-\mu)(z) \\
&\phantom{=\;I(\mu)}+\iint\log|z-\zeta|\I\d(\mu_0-\mu)(\zeta)\d(\mu_0-\mu)(z) \\
&\ge I(\mu),
\end{align*}
and so $\mu=\mu_0$.

To prove relation \eqref{e-variation 2}, we consider for the equilibrium measure $\mu$ the set
\[ B = \{z\in D\;|\;E(z) < E_0\}. \]
If $\mu(B)>0$, the variational principle (\ref{e-variation 1}) for the measure $\tilde\mu$ given by $\d\tilde\mu = \frac{\chi_B}{\mu(B)}\d\mu$ would yield
\[ E_0\le\int E(z)\frac{\chi_B(z)}{\mu(B)}\d\mu(z)<E_0, \]
and so $\mu(B)$ has to vanish. Thus, we get relation \eqref{e-variation 2}.
\end{proof}

\begin{corollary}\label{c-variational principle}
If for a measure $\mu\in\mathcal M(D)$ the function $E$ defined by equation \eqref{e-E} fulfils, for some real constant $E_0$, $E(z)=E_0$ on the support of $\mu$ and $E(z)\ge E_0$ everywhere, then $\mu$ is the equilibrium measure.
\end{corollary}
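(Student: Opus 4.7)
The plan is to reduce the corollary to Proposition \ref{p-variational principle}, specifically to its converse (sufficiency) direction: if the function $E$ defined by \eqref{e-E} satisfies the variational inequality \eqref{e-variation 1}, then $\mu$ is the equilibrium measure for $V$ on $D$. All that is needed is therefore to upgrade the two pointwise hypotheses ($E=E_0$ on $\supp\mu$ and $E\ge E_0$ everywhere on $D$) into the integrated statement \eqref{e-variation 1}.

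First I would integrate the equality $E(z)=E_0$ against $\mu$: since $\mu$ is a Borel probability measure concentrated on its support, this yields $\int E(z)\d\mu(z)=E_0$. Secondly, for any $\tilde\mu\in\mathcal M(D)$ the global lower bound $E(z)\ge E_0$ integrates to
\[ \int E(z)\d\tilde\mu(z) \ge E_0\int\d\tilde\mu(z) = E_0 = \int E(z)\d\mu(z), \]
which is precisely the variational inequality \eqref{e-variation 1}. An application of Proposition \ref{p-variational principle} then identifies $\mu$ with the (unique) equilibrium measure.

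The only subtlety I anticipate is ensuring that all the integrals appearing above are well defined. The quantity $\int\log|z-\zeta|\I\d\mu(\zeta)$ must be finite on $\supp\mu$ for $E$ to be meaningful there, but this is already built into the assumption that $E(z)=E_0$ on $\supp\mu$ together with the continuity of $V$. For an arbitrary competitor $\tilde\mu\in\mathcal M(D)$ the integral $\int E\d\tilde\mu$ is either finite, in which case the displayed inequality is literal, or equals $+\infty$, in which case \eqref{e-variation 1} holds trivially. Thus no genuine obstacle arises: the corollary is essentially just the converse half of Proposition \ref{p-variational principle} restated in a more convenient form for checking candidate equilibrium measures, and the main work has already been done in the proof of that proposition.
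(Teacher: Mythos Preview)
Your proposal is correct and is exactly the intended derivation: the paper states the corollary without proof because the two pointwise hypotheses immediately imply \eqref{e-variation 1}, after which Proposition \ref{p-variational principle} applies. Your discussion of the integrability subtleties is a careful bonus, but no further argument is needed.
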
 

We may give this variational principle an electrostatic meaning. We consider the equilibrium measure $\mu$ as density of charged particles in the plane with total charge $1$. Then, since $2\log|z|\I$ is the Green's function in two dimension, the function $E$ becomes the electrostatic potential produced by the external potential $V$ and the charged particles. Corollary \ref{c-variational principle} now tells us that the electrostatic potential is constant inside the charged area and increases outwards, so the configuration described by the measure $\mu$ is stable. The functional $I$ becomes in this setting the electrostatic potential energy of the configuration, which by construction is minimised by $\mu$. 

Out of this analogy, we see that for an absolutely continuous measure $\mu$ with $\d\mu(z)=\rho(z)\dd z$, the density $\rho$ will solve the Poisson equation $\triangle V=-4\pi\rho$ inside the support of $\mu$.

\subsection{The eigenvalue density}\label{s:eigenvalueDensity}
Let us now calculate the eigenvalue density and, more generally, the correlation functions $R_N^{(k)}$, $1\le k\le N$, out of the equilibrium measure $\mu$. To this end, we first consider the behaviour of the probability distribution $P_N$ around the equilibrium measure. So consider the sets
\[ A_{N,\eta}=\{\vec z\in D_0^N\;|\;I(\delta_{\vec z})\le I_0+\eta\},\quad\eta>0. \]
Let us further assume that the boundary $\partial D$ of the cut-off $D$ is a twice continuously differentiable curve.

\begin{lemma}\label{l:measureApproximation}
For all $\varepsilon>0$, there exists an absolutely continuous Borel probability measure $\mu_\varepsilon$ on $D$ so that $\d\mu_\varepsilon(z)=\rho_\varepsilon(z)\dd z$ with $\rho_\varepsilon\in C(D)$ and
\[ I(\mu_\varepsilon)\le I_0+\varepsilon. \]
\end{lemma}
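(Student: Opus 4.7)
My plan is to approximate $\mu_0$ in two steps: first I retract its mass strictly into the interior of $D$, and then I mollify to obtain a measure with continuous density. Both operations must be controlled so that $I$ increases by at most $\varepsilon$.

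For the retraction, I would exploit the $C^2$-regularity of $\partial D$ to construct, for each small $t>0$, a bi-Lipschitz homeomorphism $\Phi_t\colon D\to D_t:=\{z\in D\;|\;\mathrm{dist}(z,\partial D)\ge t\}$ with $|\Phi_t(z)-z|=\Ord(t)$ and bi-Lipschitz constants tending to $1$ as $t\to 0$. The standard construction uses a smooth collar neighbourhood of $\partial D$ and reparametrises the inward normal coordinate linearly by a factor $1-t/\delta\to 1$, extending by the identity away from the collar. The pushforward $\mu^{(t)}:=(\Phi_t)_*\mu_0$ is then supported in $D_t$, and I claim $I(\mu^{(t)})\to I_0$ as $t\to 0$. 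The $V$-contribution is immediate from $\|\Phi_t-\mathrm{id}\|_\infty\to 0$ and uniform continuity of the polynomial $V$. For the logarithmic contribution, the bi-Lipschitz bound $|\Phi_t(z)-\Phi_t(\zeta)|\ge(1-\eta_t)|z-\zeta|$ with $\eta_t\to 0$ produces the majorant $\log|z-\zeta|\I+C$, which is $\mu_0\otimes\mu_0$-integrable because $I(\mu_0)<\infty$ and $V$ is bounded, so dominated convergence applies. I then fix $t$ so small that $I(\mu^{(t)})\le I_0+\varepsilon/2$ and $\mathrm{dist}(\supp\mu^{(t)},\partial D)\ge 2r$ for some $r>0$.

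For the mollification step, let $\varphi\in C_c^\infty(\C)$ be a non-negative radial bump supported in $B_1(0)$ with $\int\varphi\dd z=1$, set $\varphi_r(z)=r^{-2}\varphi(z/r)$, and define $\mu_\varepsilon:=\mu^{(t)}*\varphi_r$. The choice $r<\mathrm{dist}(\supp\mu^{(t)},\partial D)/2$ places the support of $\mu_\varepsilon$ inside $D$, and the resulting density $\rho_\varepsilon$ is smooth, hence continuous on $D$. The $V$-contribution changes by at most $\|V*\varphi_r-V\|_{\infty,D}$, which vanishes as $r\to 0$ by uniform continuity. For the log contribution, Fubini rewrites the double integral as
\[
\iint(\log|\cdot|\I*\psi_r)(a-b)\d\mu^{(t)}(a)\d\mu^{(t)}(b),
\]
where $\psi_r:=\varphi_r*\varphi_r$ is again a non-negative radial probability density. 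The key observation is that $\log|\cdot|\I$ is superharmonic on $\C$, so the mean-value inequality integrated against the radial kernel $\psi_r$ yields $(\log|\cdot|\I*\psi_r)(z)\le\log|z|\I$ pointwise. Consequently the log contribution cannot increase under this mollification, and choosing $r$ small enough to absorb the change in the $V$-part into $\varepsilon/2$ yields $I(\mu_\varepsilon)\le I(\mu^{(t)})+\varepsilon/2\le I_0+\varepsilon$.

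The main technical obstacle is the retraction $\Phi_t$: one has to verify that its bi-Lipschitz constants actually tend to $1$, which is what allows dominated convergence to control the logarithmic energy, and this is precisely where the $C^2$-assumption on $\partial D$ is used through the smooth collar. The superharmonicity observation in the mollification step is the clean conceptual point that lets the convolution with $\psi_r$ absorb no positive cost in the log term, so once the retraction is in place the remainder of the argument reduces to routine uniform approximation of $V$ by $V*\varphi_r$.
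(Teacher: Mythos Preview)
Your proposal is correct and takes a genuinely different route from the paper.

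The paper uses a single-step, boundary-adapted mollification: it sets
\[
\tilde\rho_\varepsilon(z)=\frac{1}{\lambda(B_\varepsilon(z)\cap D)}\,\mu_0(B_\varepsilon(z)\cap D),
\]
using clipped balls $B_\varepsilon(z)\cap D$ so that the smoothed density automatically lives on $D$ with no retraction needed. The $C^2$ hypothesis enters by guaranteeing that the area ratio $\lambda(B_\varepsilon(z)\cap D)/(\pi\varepsilon^2)$ stays bounded away from zero uniformly in $z\in D$. Weak convergence and convergence of the logarithmic energy are then established by direct hands-on estimates on the double integral.

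Your retract-then-mollify scheme trades the clipped-ball geometry for a collar construction; the $C^2$ assumption is spent on building $\Phi_t$ with bi-Lipschitz constants tending to~$1$ rather than on controlling area ratios. The payoff is that the second step becomes conceptually clean: your superharmonicity observation shows the logarithmic energy can only \emph{decrease} under radial convolution, so the whole $\varepsilon/2$ budget goes to the uniformly continuous $V$-term, whereas the paper has to estimate both contributions by hand. One minor point: for the retraction step you really want the \emph{two-sided} bi-Lipschitz bound, which controls $\bigl|\log|\Phi_t(z)-\Phi_t(\zeta)|-\log|z-\zeta|\bigr|$ uniformly by a constant tending to zero; the one-sided inequality you wrote gives only the upper majorant, though on the compact $D$ the missing lower bound is automatic from $\mathrm{diam}(D)<\infty$.
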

\begin{proof}
Set $B_\varepsilon^D(z)=B_\varepsilon(z)\cap D$. Since the boundary of $D$ is chosen as twice continuously differentiable curve, we find an $\varepsilon_0>0$ such that the circle $\partial B_\varepsilon(z)$ cuts the boundary $\partial D$ of $D$ for all $z\in D$ and $\varepsilon\in(0,\varepsilon_0)$ at most twice. Then, for any $\delta\in(0,\frac12)$, we find an $\varepsilon_1\in(0,\varepsilon_0)$ such that the area $\kappa_\varepsilon(z)=\lambda(B_\varepsilon^D(z))$ of $B_\varepsilon^D(z)$ fulfils
\[ \delta\le\frac{\kappa_\varepsilon(z)}{\pi\varepsilon^2}\le1\quad\textrm{for all $z\in D$ and $\varepsilon\in(0,\varepsilon_1)$}. \]

Let now $\mu$ be the equilibrium measure and set $\tilde\rho_\varepsilon(z)=\frac1{\kappa_\varepsilon(z)}\mu(B_\varepsilon^D(z))$. Then, for all continuous functions $\varphi\in C(D)$,
\begin{align*}
\left|\int_D\varphi(z)\tilde\rho_\varepsilon(z)\dd z-\int_D\varphi(\zeta)\d\mu(\zeta)\right|&\le\int_D\left|\int_{B_\varepsilon^D(\zeta)}\frac{\varphi(z)}{\kappa_\varepsilon(z)}\dd z-\varphi(\zeta)\right|\d\mu(\zeta) \\
&\le\int_D\max_{z\in B_\varepsilon^D(\zeta)}\left|\frac{\kappa_\varepsilon(\zeta)}{\kappa_\varepsilon(z)}\varphi(z)-\varphi(\zeta)\right|\d\mu(\zeta)\to0\quad(\varepsilon\to0).
\end{align*}
So the measure $\tilde\mu_\varepsilon$, defined by $\d\tilde\mu_\varepsilon(z)=\tilde\rho_\varepsilon(z)\dd z$, converges weakly to the equilibrium measure $\mu$. Moreover, we have
\begin{align*}
&\left|\iint_{D^2}\log|z-w|\I\tilde\rho_\varepsilon(z)\tilde\rho_\varepsilon(w)\dd z\dd w-\iint\log|\zeta-\omega|\I\d\mu(\zeta)\d\mu(\omega)\right| \\
&\qquad\qquad\le\iint\left|\iint_{B_\varepsilon^D(\zeta)\times B_\varepsilon^D(\omega)}\frac1{\kappa_\varepsilon(z)\kappa_\varepsilon(w)}\log\left|\frac{z-w}{\zeta-\omega}\right|\I\dd z\dd w\right|\d\mu(\zeta)\d\mu(\omega) \\
&\qquad\qquad\qquad+\iint\left|\iint_{B_\varepsilon^D(\zeta)\times B_\varepsilon^D(\omega)}\frac1{\kappa_\varepsilon(z)\kappa_\varepsilon(w)}\dd z\dd w-1\right|\left|\log|\zeta-\omega|\right|\d\mu(\zeta)\d\mu(\omega).
\end{align*}
Now the second term tends to zero in the limit $\varepsilon\to0$, and because of
\begin{align*}
&\left|\iint_{B_\varepsilon^D(\zeta)\times B_\varepsilon^D(\omega)}\frac1{\kappa_\varepsilon(z)\kappa_\varepsilon(w)}\log\left|\frac{z-w}{\zeta-\omega}\right|\I\dd z\dd w\right| \\
&\qquad\qquad\le\frac{|\zeta-\omega|^4}{\delta^2\pi^2\varepsilon^4}\iint_{B_{\varepsilon/|\zeta-\omega|}(0)^2}\left|\log\left|1+z-w\right|\right|\dd z\dd w \\
&\qquad\qquad\le\frac{|\zeta-\omega|^2}{\delta^2\pi\varepsilon^2}\int_{B_{2\varepsilon/|\zeta-\omega|}(0)}\left|\log\left|1+v\right|\right|\dd v 
\le C\left|\log\left|1+\frac{2\varepsilon}{|\zeta-\omega|}\right|\right|\to0\quad(\varepsilon\to0),
\end{align*}
for some constant $C>0$, the first term, too, vanishes in the limit $\varepsilon\to0$. Therefore, $I(\tilde\mu_\varepsilon)\to I_0$ for $\varepsilon\to0$. Hence, we find for any $\varepsilon>0$ some $\tilde\varepsilon>0$ such that the absolutely continuous measure $\mu_\varepsilon=\frac1{\tilde\mu_{\tilde\varepsilon}(D)}\tilde\mu_{\tilde\varepsilon}$ fulfils $I(\mu_\varepsilon)\le I_0+\varepsilon$.
\end{proof}
 
\begin{lemma}\label{l:probability peak}
The probability $P_N(D^N\setminus A_{N,\eta})$ drops for $N\to\infty$ to zero faster than $\e^{-\frac12N^2\eta}$.
\end{lemma}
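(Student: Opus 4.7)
The approach is to combine a pointwise upper bound on the integrand defining $P_N(D^N\setminus A_{N,\eta})$ with a Jensen-type lower bound on $Z_N$. Writing $P_N(\vec z)=Z_N\I\e^{-N^2I(\delta_{\vec z})}$ and noting that, by the very definition of $A_{N,\eta}$, we have $I(\delta_{\vec z})>I_0+\eta$ on $D_0^N\setminus A_{N,\eta}$ (the remaining set $D^N\setminus D_0^N$ having Lebesgue measure zero), the numerator admits the trivial estimate
\[
\int_{D^N\setminus A_{N,\eta}}\!\!\e^{-N^2I(\delta_{\vec z})}\ddd N{\vec z}\le\lambda(D)^N\,\e^{-N^2(I_0+\eta)}.
\]

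The substantive step is to establish a matching lower bound $Z_N\ge\e^{-N^2(I_0+\varepsilon)-\Ord(N)}$ for every fixed $\varepsilon>0$. The plan is to apply Lemma \ref{l:measureApproximation} to obtain an absolutely continuous $\tilde\mu_\varepsilon$ with continuous density and $I(\tilde\mu_\varepsilon)\le I_0+\tfrac{\varepsilon}{2}$, and then mix it with the normalised Lebesgue measure on $D$, setting $\nu_\varepsilon=(1-\tau)\tilde\mu_\varepsilon+\tau\lambda|_D/\lambda(D)$ with $\tau>0$ small, so that the resulting density $\rho_\varepsilon$ is bounded above and below by positive constants on $D$ while still $I(\nu_\varepsilon)\le I_0+\varepsilon$; the increase of $I$ under this convex combination is controlled by the expansion \eqref{e-I expanded}. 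Changing variables in $Z_N$ to the product measure $\nu_\varepsilon^{\otimes N}$ and applying Jensen's inequality for the concave logarithm gives
\[
\log Z_N\ge-N^2\,\mathbb E_{\nu_\varepsilon^{\otimes N}}[I(\delta_{\vec z})]-N\int_D\log\rho_\varepsilon\,\d\nu_\varepsilon.
\]
A short symmetry computation shows $\mathbb E_{\nu_\varepsilon^{\otimes N}}[I(\delta_{\vec z})]=I(\nu_\varepsilon)-\tfrac1N\iint\log|z-\zeta|\I\d\nu_\varepsilon(z)\d\nu_\varepsilon(\zeta)$, and the double integral here is finite by Lemma \ref{l-integrability} while $\log\rho_\varepsilon$ is bounded on $D$, so both correction terms are $\Ord(1)$ and the desired lower bound on $Z_N$ follows.

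Combining the two estimates with $\varepsilon=\eta/3$ yields
\[
P_N(D^N\setminus A_{N,\eta})\le\lambda(D)^N\,\e^{-\frac23N^2\eta+\Ord(N)},
\]
which is $\ord(\e^{-\frac12N^2\eta})$ since the surviving exponent $-\tfrac16N^2\eta$ dominates the $\Ord(N)$ error and the factor $\lambda(D)^N=\e^{N\log\lambda(D)}$. The main obstacle is the lower bound on $Z_N$: Jensen's inequality forces the reference measure to have density bounded away from zero so that $\int_D\log\rho_\varepsilon\,\d\nu_\varepsilon$ stays $\Ord(1)$, which is precisely why Lemma \ref{l:measureApproximation} alone is not enough and must be supplemented by the small Lebesgue admixture described above.
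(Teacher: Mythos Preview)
Your proposal is correct and follows essentially the same route as the paper: a trivial pointwise bound on the numerator combined with a Jensen lower bound on $Z_N$ obtained by integrating against an absolutely continuous approximant furnished by Lemma~\ref{l:measureApproximation}.

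One remark: the Lebesgue admixture you introduce is not actually needed. You worry that $\int_D\log\rho_\varepsilon\,\d\nu_\varepsilon$ might blow up if $\rho_\varepsilon$ is not bounded away from zero, but since $\rho_\varepsilon$ is continuous on the compact set $D$, the integrand $\rho_\varepsilon\log\rho_\varepsilon$ extends continuously (with value $0$ where $\rho_\varepsilon=0$) and is therefore bounded; hence the entropy term is automatically $\Ord(1)$. The paper exploits exactly this, restricting the integral defining $Z_N$ to $\{\vec z:\rho_\eta(z_i)\ne0\;\forall i\}$ just to make the change of variables legitimate, and then applying Jensen directly with the measure $\mu_\eta$ from Lemma~\ref{l:measureApproximation} (with $I(\mu_\eta)\le I_0+\tfrac\eta4$). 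Your version works too; it is simply a little more elaborate than necessary.
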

\begin{proof}
Using Lemma \ref{l:measureApproximation}, we choose a Borel probability measure $\mu_\eta$ with 
\[ I(\mu_\eta)\le I_0+\frac\eta4\quad\textrm{and}\quad\d\mu_\eta(z)=\rho_\eta(z)\dd z,\quad \rho_\eta\in C(D). \]
Then, with Jensen's theorem and $\int I(\delta_{\vec z})\prod\d\mu_\eta(z_i)\le I_0+\frac\eta4+\ord(1)$ for $N\to\infty$, we get
\[ Z_N\ge\int_{\{\vec z\in D_0^N\,|\,\rho_\eta(z_i)\ne0\;\forall i\}}\hspace{-6em}\e^{-N^2I(\delta_{\vec z})-\sum_{i=1}^N\log\rho_\eta(z_i)}\prod_{i=1}^N\d\mu_\eta(z_i)\ge\e^{-N^2(I_0+\frac\eta4)+\ord(N^2)}, \]
and therefore,
\[ P_N(D^N\setminus A_{N,\eta})\le\int_{D_0^N}\e^{N^2(I_0+\frac\eta4)+\ord(N^2)-N^2(I_0+\eta)}\prod_{i=1}^N\dd{z_i}=\ord(\e^{-\frac12N^2\eta}). \qedhere \]
\end{proof}

\begin{lemma}\label{l:convergent measure}
For $\eta>0$, any sequence $(\vec z_N)_{N\in\N}$, $\vec z_N\in A_{N,\eta}$, and any convergent subsequence $(\delta_{\vec z_{N(n)}})_{n\in\N}$ of the corresponding $\delta$-measures, we have 
\[ I_0\le I(\nu_\eta)\le I_0+\eta,\quad\textrm{where}\quad\nu_\eta=\lim_{n\to\infty}\delta_{\vec z_{N(n)}}\in\mathcal M(D). \]
\end{lemma}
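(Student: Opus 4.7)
The plan is to establish both inequalities by truncating the logarithmic kernel to remove its diagonal singularity, in the same spirit as in the existence proof of the equilibrium measure. The lower bound $I_0\le I(\nu_\eta)$ will be immediate from the definition of $I_0$ as an infimum once $\nu_\eta\in\mathcal M(D)$ has been established, and the absence of point masses of $\nu_\eta$ will emerge as a by-product of the argument for the upper bound.

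For a parameter $L>0$, I would introduce the bounded continuous function $f_L(z,\zeta)=\min\{\log|z-\zeta|\I,L\}$ on $D\times D$ (continuous also on the diagonal, since $f_L\equiv L$ in a neighbourhood of it) together with the truncated energy
\[ I_L(\mu)=\int V(z)\d\mu(z)+\iint f_L(z,\zeta)\d\mu(\zeta)\d\mu(z). \]
For the discrete measure $\delta_{\vec z}$ with $\vec z\in D_0^N$, splitting the diagonal from the off-diagonal contributions of $\delta_{\vec z}\otimes\delta_{\vec z}$ gives
\[ I_L(\delta_{\vec z})=\frac{L}{N}+\frac1N\sum_{i=1}^NV(z_i)+\frac1{N^2}\sum_{i\ne j}\min\{\log|z_i-z_j|\I,L\}\le\frac{L}{N}+I(\delta_{\vec z}), \]
because truncation by $L$ only lowers each off-diagonal summand.

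Specialising this inequality along the sequence $\vec z_{N(n)}\in A_{N(n),\eta}$ yields $I_L(\delta_{\vec z_{N(n)}})\le L/N(n)+I_0+\eta$. Since $V$ is continuous on the compact set $D$ and $f_L$ is continuous and bounded on $D\times D$, the weak-*-convergence $\delta_{\vec z_{N(n)}}\to\nu_\eta$ (which induces the product convergence $\delta_{\vec z_{N(n)}}\otimes\delta_{\vec z_{N(n)}}\to\nu_\eta\otimes\nu_\eta$) forces $I_L(\delta_{\vec z_{N(n)}})\to I_L(\nu_\eta)$, so that $I_L(\nu_\eta)\le I_0+\eta$ for every $L>0$.

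The final step is to let $L\to\infty$. As $D$ is compact, $\log|z-\zeta|\I$ is bounded below on $D\times D$ by some $-C$, and hence $f_L+C\ge 0$ is monotonically non-decreasing in $L$ with pointwise limit $\log|z-\zeta|\I+C$ off the diagonal and $+\infty$ on it. Monotone convergence yields
\[ \lim_{L\to\infty}I_L(\nu_\eta)=\int V(z)\d\nu_\eta(z)+\iint\log|z-\zeta|\I\d\nu_\eta(\zeta)\d\nu_\eta(z), \]
where the double integral is now unrestricted. Were $\nu_\eta$ to carry a point mass of weight $c>0$ at some $z_0\in D$, the atom $\{(z_0,z_0)\}$ of $\nu_\eta\otimes\nu_\eta$ would contribute $+\infty$ to this limit, contradicting the uniform bound $I_0+\eta$. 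Hence $\nu_\eta$ is free of atoms, the diagonal is a $\nu_\eta\otimes\nu_\eta$-nullset, and the above limit coincides with $I(\nu_\eta)$, giving $I(\nu_\eta)\le I_0+\eta$; the reverse inequality $I_0\le I(\nu_\eta)$ is then the very definition of $I_0$ on $\mathcal M(D)$. The main obstacle is precisely this logarithmic singularity of the kernel on the diagonal, which obstructs a direct application of weak-*-convergence to $I$; the truncation $f_L$ bypasses it uniformly, and the subsequent passage $L\to\infty$ simultaneously produces the energy bound and rules out point masses.
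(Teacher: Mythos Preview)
Your proposal is correct and follows essentially the same route as the paper: truncate the logarithmic kernel by $\min\{\log|z-\zeta|^{-1},L\}$, use $\vec z_{N(n)}\in A_{N(n),\eta}$ together with the fact that the diagonal contributes $L/N(n)$, pass to the weak-$*$ limit against the now continuous bounded integrand, and finally let $L\to\infty$ to obtain both the upper bound and the absence of point masses. Your write-up is in fact more explicit than the paper's (you spell out monotone convergence and the product convergence $\delta_{\vec z_{N(n)}}\otimes\delta_{\vec z_{N(n)}}\to\nu_\eta\otimes\nu_\eta$), but the underlying argument is identical.
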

\begin{proof} 
Using $\vec z_{N(n)}\in A_{N(n),\eta}$, we obtain with the cut-off $L\in\R$:
\begin{align*}
I_0+\eta &\ge \frac{1}{N(n)}\sum_{i=1}^{N(n)}V(z_{N(n),i})+\frac1{N(n)^2}\!\!\!\!\sum_{\phantom{(n)}1\le i\ne j\le N(n)}\!\!\!\!\min\{\log|z_{N(n),i}-z_{N(n),j}|\I,L\} \\
&=\int V(z)\d\delta_{\vec z_{N(n)}}(z)+\iint\min\{\log|z-\zeta|\I,L\}\d\delta_{\vec z_{N(n)}}(\zeta)\d\delta_{\vec z_{N(n)}}(z)-\frac{L}{N(n)}.
\end{align*}
Sending first $n$ and then $L$ to infinity brings us to $\nu_\eta\in\mathcal M(D)$, and therefore $I_0\le I(\nu_\eta)\le I_0+\eta$.
\end{proof}

\begin{theorem}\label{t-eigenvalue distribution}
For all $\varphi\in C(D^k)$, we have the equality
\begin{equation}\label{e-eigenvalue distribution}
\lim_{N\to\infty}\int_{D^k}\frac1{N^k}\varphi\big((z_i)_{i=1}^k\big)R^{(k)}_N\big((z_i)_{i=1}^k\big)\prod_{i=1}^k\dd{z_i}=\int\varphi\big((z_i)_{i=1}^k\big)\prod_{i=1}^k\d\mu(z_i).
\end{equation}
That is the measure $\frac1{N^k}R_N^{(k)}\big((z_i)_{i=1}^k\big)\prod_{i=1}^k\dd{z_i}$ on $D^k$ converges weakly to \mbox{$\prod_{i=1}^k\d\mu(z_i)$.}
\end{theorem}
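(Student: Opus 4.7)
The plan is to reduce the claim to the concentration of the empirical measure $\delta_{\vec z}=\frac1N\sum_{i=1}^N\delta_{z_i}$ under $P_N$ around the equilibrium measure $\mu$, which is already furnished by Lemma~\ref{l:probability peak}, and then to upgrade this to the correlation-function statement by a soft test-function argument.

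First, using the symmetry of $P_N$ and the definition of $R_N^{(k)}$, I would rewrite the left-hand side of \eqref{e-eigenvalue distribution} as
$$\frac{N!/(N-k)!}{N^k}\int_{D_0^N}\tilde\varphi(\vec z)\,P_N(\vec z)\ddd N{\vec z},$$
where $\tilde\varphi(\vec z)=\binom Nk\I\sum_{i_1<\cdots<i_k}\varphi(z_{i_1},\ldots,z_{i_k})$. The combinatorial prefactor tends to one, and up to an error of order $\|\varphi\|_\infty/N$ coming from repeated indices, $\tilde\varphi(\vec z)$ coincides with the evaluation $\int_{D^k}\varphi\,\d\delta_{\vec z}^{\otimes k}$ of the product empirical measure. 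Hence it suffices to show
$$\int_{D_0^N}\Bigl(\int_{D^k}\varphi\,\d\delta_{\vec z}^{\otimes k}\Bigr)P_N(\vec z)\ddd N{\vec z}\longrightarrow\int_{D^k}\varphi(\vec z)\prod_{i=1}^k\d\mu(z_i)\quad(N\to\infty).$$

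Second, for any $\eta>0$ I would split the domain of integration into $A_{N,\eta}$ and its complement. Since the integrand is bounded by $\|\varphi\|_\infty$, Lemma~\ref{l:probability peak} shows that the complement contributes at most $\|\varphi\|_\infty\,\ord(\e^{-\frac12N^2\eta})$. On $A_{N,\eta}$ the crucial claim is that $\delta_{\vec z}$ is close to $\mu$ in the weak topology, uniformly in $\vec z$: for every $\psi\in C(D)$ and $\varepsilon>0$ there exist $\eta>0$ and $N_0$ such that
$$\Bigl|\int_D\psi\,\d\delta_{\vec z}-\int_D\psi\,\d\mu\Bigr|<\varepsilon\quad\textrm{for all}\quad N\ge N_0,\;\vec z\in A_{N,\eta}.$$
I would establish this by contradiction: assuming it fails, extract sequences $\eta_n\to0$ and $\vec z_n\in A_{N_n,\eta_n}$ with the above quantity at least $\varepsilon$; by the compactness of Lemma~\ref{l:compactness}, pass to a weakly convergent subsequence $\delta_{\vec z_n}\to\nu$; by Lemma~\ref{l:convergent measure}, applied with any fixed $\eta_0>0$ (note that $\vec z_n\in A_{N_n,\eta_0}$ once $\eta_n<\eta_0$), the limit satisfies $I(\nu)\le I_0+\eta_0$, hence $I(\nu)=I_0$; uniqueness of the equilibrium measure then forces $\nu=\mu$, contradicting the $\varepsilon$-separation against $\psi$.

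To conclude, I would uniformly approximate $\varphi\in C(D^k)$ by finite linear combinations of product functions $\prod_j\psi_j(z_j)$ via Stone-Weierstra\ss{} and apply the uniform weak closeness to each factor, obtaining $\int\varphi\,\d\delta_{\vec z}^{\otimes k}\to\int\varphi\prod\d\mu$ uniformly on $A_{N,\eta}$; combined with the first two steps, this yields \eqref{e-eigenvalue distribution}. The main obstacle is the uniform closeness claim: even though the bound $I(\delta_{\vec z})\le I_0+\eta$ only furnishes a soft upper bound on a functional whose double integral excludes the diagonal, and $\delta_{\vec z}$ itself is not a member of $\mathcal M(D)$ (it consists only of point masses), the compactness-contradiction argument routes through weak limits, which by the divergence of $\log|z-\zeta|\I$ on the diagonal automatically lie in $\mathcal M(D)$, and crucially uses the uniqueness of $\mu$.
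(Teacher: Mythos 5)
Your proposal is correct and follows essentially the same route as the paper's proof: symmetrise to reduce the correlation-function integral to an expectation of $\int\varphi\,\d\delta_{\vec z}^{\otimes k}$, discard the complement of $A_{N,\eta}$ via Lemma~\ref{l:probability peak}, and identify the limit through Lemma~\ref{l:compactness}, Lemma~\ref{l:convergent measure} and the uniqueness of the equilibrium measure. The only difference is organisational — you prove a uniform weak-closeness claim on $A_{N,\eta}$ by a compactness--contradiction (plus an explicit Stone--Weierstra\ss{} reduction to product test functions), whereas the paper evaluates at an extremising configuration $\vec\zeta\in A_{N,\eta}$ and argues separately for the limes superior and inferior; your handling of the double limit $N\to\infty$, $\eta\to0$ is, if anything, slightly cleaner.
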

\begin{proof}
Substituting in the left-hand-side of equation \eqref{e-eigenvalue distribution} the definition of the correlation functions and turning our attention on the highest order in $N$, we obtain (because $P_N$ is invariant under the symmetric group)
\begin{align*}
\left<\varphi,\frac1{N^k}R_N^{(k)}\right>&=\frac1{N^k}\int_{D^k}\varphi(\vec z)R^{(k)}_N(\vec z)\ddd{k}{\vec z} \\
&= \frac{1}{N^k}\sum_{i_1,\ldots,i_k=1}^N\int_{D^N}\varphi\big((z_{i_j})_{j=1}^k\big)P_N(\vec z)\ddd{N}{\vec z}+\ord(1).
\end{align*}

Because of Lemma \ref{l:probability peak}, we may now integrate over the set $A_{N,\eta}$ instead of $D^N$. So let the continuous function $\frac1{N^k}\sum\varphi\big((z_{i_j})_{j=1}^k\big)$ take its maximum on the compact set $A_{N,\eta}$ at $\vec\zeta$, and set $\nu_{N,\eta} = \delta_{\vec\zeta}$. Then
\[ \left<\varphi,\frac{1}{N^k}R_N^{(k)}\right>\le\frac1{N^k}\sum_{i_1,\ldots,i_k=1}^N\varphi\big((\zeta_{i_j})_{j=1}^k\big)+\ord(1)=\int\varphi\big((z_i)_{i=1}^k\big)\prod_{i=1}^k\d\nu_{N,\eta}(z_i)+\ord(1). \]
Because of Lemma \ref{l:compactness}, we find a convergent subsequence $\nu_{N(n),\eta}\to\nu_\eta$ $(n\to\infty)$ with
\[ \varlimsup_{N\to\infty}\left<\varphi,\frac{1}{N^k}R_N^{(k)}\right>\le\int\varphi\big((z_i)_{i=1}^k\big)\prod_{i=1}^k\d\nu_\eta(z_i). \]

Now, because of Lemma \ref{l:convergent measure}, letting $\eta\to0$, a subsequence of $\nu_\eta$ converges to the equilibrium measure $\mu$, and thus,
\[ \varlimsup_{N\to\infty}\left<\varphi,\frac{1}{N^k}R_N^{(k)}\right>\le\int\varphi\big((z_i)_{i=1}^k\big)\prod_{i=1}^k\d\mu(z_i). \]
Arguing on the same way for the limes inferior concludes the proof.
\end{proof}



\pagebreak

\section{The equilibrium measure for a polynomial curve}
\subsection{The general result}
\begin{theorem}\label{t:equilibrium measure of a polynomial curve}
Let $d\in\N$. Then, for any set $(t_k)_{k=1}^\infty\subset\C$ with $t_1=0$, $|t_2|<\frac12$, and $t_k=0$ for $k>d+1$ and any compact domain $D\subset\C$ containing the origin as interior point such that the function
\[ U(z) = |z|^2-2\RE\sum_{k=2}^{d+1}t_kz^k \]
is positive on $D\setminus\{0\}$, there exists a $\delta>0$ so that for all $0<t_0<\delta$, we have $(t_k)_{k=0}^\infty\in\mathcal T_d$, and the equilibrium measure $\mu$ for the potential $V=\frac1{t_0}U$ on $D$ is given by
\[ \mu=\frac1{\pi t_0}\lambda_{D_+}, \]
where $D_+$ denotes the interior domain of the polynomial curve $\gamma$ defined by the harmonic moments $(t_k)_{k=0}^\infty$, and $\lambda_{D_+}$ is the Lebesgue measure on $D_+$.
\end{theorem}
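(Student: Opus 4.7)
The plan is to verify the sufficient condition of Corollary~\ref{c-variational principle}: for $\mu=\chi_{D_+}/(\pi t_0)$ I will show that the electrostatic potential
\[ E(z)=V(z)+2\int\log|z-\zeta|^{-1}\,d\mu(\zeta) \]
is a constant $E_0$ on $\mathrm{supp}\,\mu=\bar D_+$ and bounded below by $E_0$ on the rest of $D$. The setup is routine: Theorem~\ref{t:uniquePolCurve} and its proof produce, for $t_0$ small enough, the curve $\gamma$ and locate it inside a disc of radius $O(\sqrt{t_0})$ around the origin, so $\bar D_+\subset D$ and $\mu$ is a non-atomic probability measure on $D$.

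The analytical core is the computation of $\partial_z E$ via the Schwarz function. Applying Stokes' theorem to the Cauchy transform $\int_{D_+}dA(\zeta)/(z-\zeta)$ (excising a small disc around $z$ when $z\in D_+$) and using the defining identity $\bar\zeta=S(\zeta)$ on $\gamma$ turns the area integral into $-\tfrac{i}{2}\oint_\gamma S(\zeta)\,d\zeta/(z-\zeta)$ plus a contribution $\pi\bar z$ from the excised disc. Since $S$ is holomorphic on the entire exterior of $\gamma$, the contour can be pushed out to infinity; the Laurent expansion from Proposition~\ref{p:expansion of schwarz function} then gives the residue at $\infty$, and for $z$ outside $\gamma$ a residue $\pi S(z)$ appears at $\zeta=z$. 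Combining with $\partial_z V(z)=(\bar z-\sum_{k=2}^{d+1}kt_kz^{k-1})/t_0$ and using $t_1=0$ to cancel the polynomial parts, the key dichotomy emerges:
\[ \partial_z E(z)=\begin{cases} 0, & z\in D_+, \\ (\bar z-S(z))/t_0, & z\in D\setminus\bar D_+. \end{cases} \]
Since $E$ is real, $\partial_{\bar z}E=\overline{\partial_z E}$, so the first line makes $E$ locally constant on the connected set $D_+$, giving $E\equiv E_0$ on $\bar D_+$ by continuity.

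The inequality $E\geq E_0$ on $D\setminus\bar D_+$ is what I expect to be the hardest step. The plan is to integrate $\partial_z E$ using a primitive $T$ of $S$ on the exterior of $\gamma$, whose Laurent expansion (obtained by integrating that of $S$ termwise) is $T(z)=\sum_{k=1}^{d+1}t_kz^k+t_0\log z+O(1/z)$. This yields the explicit representation
\[ E(z)-E_0=\frac{U(z)}{t_0}-2\log|z|+\log t_0+O(1)\quad\text{for }|z|\geq C\sqrt{t_0}, \]
with the additive constant pinned down by $E=E_0$ on $\gamma$ combined with $U=O(t_0)$ and $\log|z|=\tfrac12\log t_0+O(1)$ there. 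Setting $x=|z|^2/t_0$, the elementary inequality $x-\log x\geq 1$ handles the transitional regime $|z|\sim\sqrt{t_0}$; for $|z|$ bounded away from the origin, the hypothesis $\min_{\{z\in D,\,|z|\geq\epsilon\}}U(z)>0$ gives an $O(1/t_0)$ contribution that dominates all logarithmic corrections once $t_0$ is small enough; and a second-order expansion using $|S'|=1$ on $\gamma$ produces $E-E_0\sim(2/t_0)\,\mathrm{dist}(z,\gamma)^2$ just outside $\gamma$. Gluing these three regime estimates into a single uniform lower bound on $D\setminus\bar D_+$ is the main obstacle and pins down the admissible range $t_0<\delta$; once achieved, Corollary~\ref{c-variational principle} concludes that $\mu$ is the equilibrium measure.
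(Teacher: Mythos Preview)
Your overall strategy---verify Corollary~\ref{c-variational principle} after computing $\partial_z E$ via the Schwarz function---is exactly the paper's, and your derivation of the dichotomy $\partial_z E=0$ on $D_+$ versus $\partial_z E=(\bar z-S(z))/t_0$ outside is Lemma~\ref{l-gradient} in slightly different packaging. The difficulty you correctly flag, the lower bound $E\ge E_0$ on $D\setminus\bar D_+$, is where your plan has a genuine gap.

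The issue is the intermediate scale $|z|\sim\sqrt{t_0}$. Your radial inequality $x-\log x\ge1$ with $x=|z|^2/t_0$ treats $E(z)-E_0$ as if it depended on $|z|$ alone, but at this scale the angular dependence is leading order: writing $z=|z|e^{i\theta}$ one has $U(z)/t_0=x\bigl(1-2\,\mathrm{Re}(t_2e^{2i\theta})\bigr)+O(\sqrt{t_0})$, and the additive constant you pin down by matching on $\gamma$ likewise varies with $\theta$, since $\gamma$ is to leading order an ellipse rather than a circle. So $x-\log x\ge1$ does not by itself force $E-E_0\ge0$. Nor does your near-$\gamma$ Taylor expansion reach far enough: the next term involves $S''$, which on $\gamma$ has size $\sim r^{-1}\sim t_0^{-1/2}$, so the quadratic approximation $E-E_0\sim(2/t_0)\,\mathrm{dist}(z,\gamma)^2$ is only reliable on a collar of width $o(\sqrt{t_0})$. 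Between that collar and the region $|z|\ge C\sqrt{t_0}$ (with $C$ large enough for your far-field formula to be useful) lies an annulus of width comparable to $\sqrt{t_0}$ that neither of your estimates controls.

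The paper closes this gap with two ingredients you are missing. First, the formula $\partial_{\bar z}E=(z-\rho(z))/t_0$ is used geometrically: since the Schwarz reflection $\rho$ sends the exterior annulus $h(B_{R^{-1}}\setminus\bar B_1)$ into $D_+$, the gradient of $E$ points outward on that whole annulus (not just infinitesimally near $\gamma$), so the level sets of $E$ are nested and $E\ge0$ there. Second, and this is the main work, the paper proves Proposition~\ref{p:equilibriumMeasureEllipse}, an explicit global computation of $E$ for the Gaussian potential showing $\mathcal E^{(0)}(w)\ge0$ everywhere and $\ge C>0$ for $|w|\ge R^{-1}$. Because $h(w)=rw+2r\bar t_2w^{-1}+O(r^2)$, the general $\mathcal E(w)$ differs from the ellipse function $\mathcal E^{(0)}(w)$ by $o(1)$ uniformly on $R^{-1}\le|w|\le r^{-\varepsilon}$, and the ellipse lower bound then transfers. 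Without a substitute for this ellipse comparison, the intermediate regime will not yield to a purely radial argument.
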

Therefore, the normal matrix model solves the problem of finding the interior harmonic moments $(v_k)_{k=1}^\infty$ out of the exterior harmonic moments $(t_k)_{k=0}^\infty$, namely (as can be seen from Theorem \ref{t:equilibrium measure of a polynomial curve} and \ref{t-eigenvalue distribution})
\[ v_k=\lim_{N\to\infty}\frac{t_0}N\int_{\mathcal N_N(D)}\tr(M^k)\mathcal P_N(M)\d M. \]


\begin{lemma}\label{l-gradient}
Let 
\[ V(z)=\frac1{t_0}\left(|z|^2-2\RE\sum_{k=1}^{d+1}t_kz^k\right),\quad (t_k)_{k=0}^\infty\in\mathcal T_d, \]
and denote by $\gamma$ the polynomial curve defined by the harmonic moments $(t_k)_{k=0}^\infty$. Then, in the interior domain $D_+$ of $\gamma$, the function 
\[ E(z)=V(z)+\frac2{\pi t_0}\int_{D_+}\log\left|\frac z\zeta-1\right|\I\dd\zeta \]
is equal to zero, and in the exterior domain $\C\setminus D_+$, its gradient reads
\begin{equation}\label{e-gradient}
\partial_{\bar z}E(z)=\frac1{t_0}(z-\rho(z)),
\end{equation}
where $\rho$ is the Schwarz reflection on $\gamma$.
\end{lemma}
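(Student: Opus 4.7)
My plan is to reduce $\partial_{\bar z}$ of the area integral in $E$ to a contour integral via Stokes' theorem and evaluate the latter using the Laurent expansion~\eqref{e:SchwarzFunctionExpansion} of the Schwarz function at infinity. Direct differentiation gives $\partial_{\bar z}V(z)=\frac{1}{t_0}\bigl(z-\sum_{k=1}^{d+1}k\bar t_k\bar z^{k-1}\bigr)$, and since $\log|z/\zeta-1|\I=-\log|z-\zeta|+\log|\zeta|$ one has $\partial_{\bar z}\log|z/\zeta-1|\I=-\frac{1}{2(\bar z-\bar\zeta)}$. Everything therefore reduces to evaluating the Cauchy transform $F(z):=\int_{D_+}\frac{\dd\zeta}{z-\zeta}$ and taking complex conjugates.

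The Stokes step rests on the observation that the $1$-form $\omega:=-\frac{\i}{2}\,\frac{\bar\zeta\,\d\zeta}{z-\zeta}$ satisfies $\d\omega=\frac{\dd\zeta}{z-\zeta}$. Applying it on $D_+$ (excising a small disc around $z$ when $z\in D_+$ and passing to the limit, which contributes an extra $+\pi\bar z$), together with the relation $\bar\zeta=S(\zeta)$ on $\gamma$, gives
\[
F(z)=-\frac{\i}{2}\oint_\gamma\frac{S(\zeta)}{z-\zeta}\,\d\zeta+\pi\bar z\,\chi_{D_+}(z).
\]
I then deform $\gamma$ outward to a circle $|\zeta|=R$, $R>|z|$. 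This picks up the residue $-S(z)$ of $S(\zeta)/(z-\zeta)$ at $\zeta=z$ precisely when $z\notin\overline{D_+}$, and on the large circle the Laurent expansion shows only the polynomial part of $S$ contributes, producing $-2\pi\i\sum_{k=1}^{d+1}kt_kz^{k-1}$. Conjugating yields
\[
\int_{D_+}\frac{\dd\zeta}{\bar z-\bar\zeta}=\pi\biggl(q(z)-\sum_{k=1}^{d+1}k\bar t_k\bar z^{k-1}\biggr),
\]
with $q(z)=z$ on $D_+$ and $q(z)=\rho(z)$ on $\C\setminus\overline{D_+}$.

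Plugging this into $\partial_{\bar z}E=\partial_{\bar z}V-\frac{1}{\pi t_0}\int_{D_+}\frac{\dd\zeta}{\bar z-\bar\zeta}$, the polynomial tails $\sum_k k\bar t_k\bar z^{k-1}$ cancel exactly between the two summands, leaving $\partial_{\bar z}E=0$ on $D_+$ and $\partial_{\bar z}E=(z-\rho(z))/t_0$ on the exterior---this is the gradient formula~\eqref{e-gradient}. For the interior claim, $E$ is real so $\partial_zE=\overline{\partial_{\bar z}E}=0$ on $D_+$ as well, hence $E$ is constant there; evaluating at the origin ($V(0)=0$ and $\log|0/\zeta-1|\I=0$) fixes the constant at $0$.

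The principal bookkeeping hurdle is keeping the two regimes $z\in D_+$ and $z\in\C\setminus\overline{D_+}$ straight---whether Stokes has excised a disc around $z$, and whether the outward contour deformation sweeps over the pole of $1/(z-\zeta)$. Conceptually, the clean cancellation in $\partial_{\bar z}E$ occurs because the polynomial part of the Schwarz function's Laurent expansion at infinity is exactly the antiholomorphic part of the polynomial potential~$V$.
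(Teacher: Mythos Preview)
Your argument is correct and rests on the same ideas as the paper's proof: convert the area integral to a contour integral over $\gamma$ via Stokes, replace $\bar\zeta$ by the Schwarz function $S(\zeta)$ there, and then read off the result using the Laurent expansion \eqref{e:SchwarzFunctionExpansion}. The packaging differs slightly in two places. For the exterior, the paper introduces the explicit decomposition $S=S_{\mathrm i}+S_{\mathrm e}$ with $S_{\mathrm i}(z)=\sum_{k=1}^{d+1}kt_kz^{k-1}$ and identifies $S_{\mathrm e}(z)=\frac{1}{\pi}\int_{D_+}\frac{1}{z-\zeta}\dd\zeta$ via Cauchy's formula and Stokes; your outward contour deformation accomplishes the same splitting implicitly, with the residue at $\zeta=z$ producing $S(z)$ and the large-circle contribution producing $S_{\mathrm i}(z)$. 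For the interior, the paper computes the area integral directly (Green's theorem, integration by parts, then Taylor expansion of $\log(1-z/\zeta)$) and shows $E$ vanishes identically; you instead show $\partial_{\bar z}E=0$ on $D_+$, use reality of $E$ to conclude it is constant, and evaluate at the origin---this is a clean shortcut that exploits the observation $E(0)=0$ and the fact that $0\in D_+$ for $\vec t\in\mathcal T_d$. Both routes are equally valid; yours is arguably more economical since a single Cauchy-transform computation handles both regions at once.
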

\begin{proof}
To verify the first statement, we use Green's theorem and obtain
\[ \frac2\pi\int_{D_+}\log\left|\frac z\zeta-1\right|\I\dd\zeta = -|z|^2+\RE\frac{1}{2\pi\i}\oint_\gamma\left(\log\left|\frac z\zeta-1\right|\I\bar\zeta+\frac{|\zeta|^2}{\zeta-z}\right)\d\zeta. \]
Integration by parts of the second integrand yields immediately
\[ \frac2\pi\int_{D_+}\log\left|\frac z\zeta-1\right|\I\dd\zeta = -|z|^2-2\RE\frac{1}{2\pi\i}\oint_\gamma\log\left(1-\frac z\zeta\right)\bar\zeta\d\zeta, \]
and expanding the logarithm around $z=0$ leads us to $E(z)=0$ for all $z\in D_+$.

For the proof of the second part, we write the Schwarz function $S$ of the curve $\gamma$ as $S=S_{\mathrm i}+S_{\mathrm e}$, where $S_{\mathrm i}$ is analytic in $D_+$ and $S_{\mathrm e}$ in the complement $\C\setminus D_+$.
From Proposition \ref{p:expansion of schwarz function}, we already know that $S_{\mathrm i}(z)=\sum_{k=1}^{d+1}kt_kz^{k-1}$. 
For the function $S_{\mathrm e}$, we find with Cauchy's integral and Stokes' formula
\[ S_{\mathrm e}(z) = -\frac1{2\pi\i}\oint_\gamma\frac{\bar\zeta-S_{\mathrm i}(\zeta)}{\zeta-z}\d\zeta  = \frac1\pi\int_{D_+}\frac1{z-\zeta}\dd\zeta,\quad z\in\C\setminus D_+. \]


Therefore, for all $z\in \C\setminus D_+$,
\[ \partial_{\bar z}E(z) = \frac1{t_0}\left(z-\overline{S_{\mathrm i}(z)}-\overline{S_{\mathrm e}(z)}\right) = \frac1{t_0}\left(z-\rho(z)\right).\qedhere \]
\end{proof}


\subsection{The Gaussian case}
In this case, where the polynomial curve is an ellipse, we are able to calculate the equilibrium measure on $\C$ explicitly. 
\begin{proposition}\label{p:equilibriumMeasureEllipse}
For any $t_0>0$ and any $t_2\in\C$ with $|t_2|<\frac12$, the equilibrium measure $\mu$ for the potential
\[ V(z)=\frac{1}{t_0}(|z|^2-t_2z^2-\bar t_2\bar z^2) \]
on $\C$ is given by
\[ \mu = \frac{1}{b_1b_2}\lambda_{D_+}, \]
where $D_+$ denotes the interior of the ellipse
\begin{equation}\label{e-ellipse}
\frac{\RE(\sqrt{\bar t_2}z)^2}{b_1^2}+\frac{\IM(\sqrt{\bar t_2}z)^2}{b_2^2}=|t_2|,\quad b_1=\sqrt{\frac{1+2|t_2|}{1-2|t_2|}t_0},\quad b_2=\sqrt{\frac{1-2|t_2|}{1+2|t_2|}t_0}.
\end{equation}
\end{proposition}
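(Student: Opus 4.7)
The plan is to verify directly, via the variational principle of Corollary \ref{c-variational principle}, that the proposed measure $\mu$ is the equilibrium measure. The general Theorem \ref{t:equilibrium measure of a polynomial curve} does not immediately apply since $t_0$ is allowed to be arbitrary here, but its key ingredient, Lemma \ref{l-gradient}, remains available for the Gaussian curve at any $t_0>0$. If one is wary of extending the lemma beyond $\mathcal T_d$, a clean alternative is the scaling $z\mapsto\lambda z$ (which rescales $t_0\mapsto t_0/\lambda^2$ while preserving $t_2$), reducing any $t_0$ to the small-$t_0$ regime covered by Theorem \ref{t:equilibrium measure of a polynomial curve} and then pushing the equilibrium measure forward; both routes lead to the same conclusion.

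First I would identify the polynomial curve $\gamma$ determined by the harmonic moments $(t_0,0,t_2,0,0,\ldots)$. Since $t_2$ is the highest non-vanishing exterior moment, $\gamma$ has degree $d=1$ and parametrisation $h(w)=rw+sw\I$ (the $w^0$ coefficient vanishes by symmetry, consistent with $t_1=0$). Computing the Schwarz function by solving the quadratic $rw^2-zw+s=0$ and using $S(z)=\bar h(1/w)$ gives
\[ S(z)=\frac{r^2+|s|^2}{2rs}\,z+\frac{|s|^2-r^2}{2rs}\sqrt{z^2-4rs}=\frac{\bar s}{r}\,z+\frac{r^2-|s|^2}{z}+\Ord(z^{-3}) \]
as $z\to\infty$. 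Matching with Proposition \ref{p:expansion of schwarz function} yields $2t_2=\bar s/r$ and $t_0=r^2-|s|^2$, hence $s=2\bar t_2 r$ and $r=\sqrt{t_0/(1-4|t_2|^2)}$. Writing $s=|s|\e^{\i\theta}$ and $w=\e^{\i(\psi+\theta/2)}$, one finds $h(w)=\e^{\i\theta/2}\bigl((r+|s|)\cos\psi+\i(r-|s|)\sin\psi\bigr)$, tracing an ellipse with semi-axes $b_1=r+|s|$ and $b_2=r-|s|$; a short algebraic check confirms these are exactly the formulas in \eqref{e-ellipse}, and rewriting the ellipse equation in terms of $\sqrt{\bar t_2}z$ gives the stated form.

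Next I would invoke Lemma \ref{l-gradient} applied to $\gamma$: its proof uses only Green's theorem and the explicit Schwarz function of the curve, so it gives for the function
\[ \tilde E(z)=V(z)+\frac{2}{\pi t_0}\int_{D_+}\log\left|\frac{z}{\zeta}-1\right|\I\dd\zeta \]
the identity $\tilde E\equiv 0$ on $D_+$ and $\partial_{\bar z}\tilde E(z)=\frac{1}{t_0}(z-\rho(z))$ on $\C\setminus\overline{D_+}$, where $\rho$ is the Schwarz reflection on $\gamma$. Since the variational effective potential $E(z)=V(z)+\frac{2}{\pi t_0}\int_{D_+}\log|z-\zeta|\I\dd\zeta$ differs from $\tilde E$ by the finite $z$-independent constant $\frac{2}{\pi t_0}\int_{D_+}\log|\zeta|\dd\zeta$ (finite because the origin lies inside the ellipse and the log-singularity is integrable in two dimensions), we conclude $E\equiv E_0$ on $D_+$ and $\partial_{\bar z}E=\frac{1}{t_0}(z-\rho(z))$ outside $\overline{D_+}$.

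Finally, I would verify $E\ge E_0$ on $\C\setminus D_+$, so that Corollary \ref{c-variational principle} identifies $\mu$ as the equilibrium measure. The hypothesis $|t_2|<\tfrac12$ gives $V(z)\ge(1-2|t_2|)|z|^2/t_0$, which combined with the logarithmic growth of the Newtonian potential forces $E(z)\to\infty$ as $|z|\to\infty$. On any compact annular region $\{|z|\le R\}\setminus D_+$, $E$ is continuous and attains its minimum; an interior critical point would require $\partial_{\bar z}E=0$, i.e.\ $z=\rho(z)$, which is impossible outside $\gamma$ since $\rho$ interchanges the interior and the exterior of $\gamma$ and fixes only $\gamma$ itself. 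Hence the minimum is attained on $\partial D_+$ at the value $E_0$, giving $E\ge E_0$ throughout, strictly outside $\overline{D_+}$. The main anticipated obstacle is justifying the use of Lemma \ref{l-gradient} at arbitrary $t_0$ outside the range $\mathcal T_d$; as noted above, the proof of that lemma depends only on the polynomial structure of $\gamma$ and Green's theorem, and the scaling argument provides a clean fallback if one prefers to stay strictly within the $\mathcal T_d$ framework.
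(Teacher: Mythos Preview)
Your argument is correct and uses the same scaffolding as the paper (identify $\gamma$ as the ellipse with $h(w)=rw+2\bar t_2 r\,w^{-1}$, invoke Lemma~\ref{l-gradient} for $E\equiv E_0$ on $D_+$ and $\partial_{\bar z}E=\tfrac1{t_0}(z-\rho(z))$ outside, then appeal to Corollary~\ref{c-variational principle}). The difference lies entirely in how you establish $E\ge E_0$ on $\C\setminus D_+$.

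The paper integrates the gradient identity to obtain $\mathcal E(w)=E(h(w))$ explicitly and then carries out a somewhat lengthy algebraic rearrangement, reducing the inequality $\mathcal E(w)\ge0$ for $|w|>1$ to the one--variable calculus lemma that $f(x)=(x-1)(1+\alpha x^{-1})-(1+\alpha)\log x\ge0$ on $[1,\infty)$ for $\alpha\in[0,1]$. You instead use a soft minimum--principle argument: $E\to\infty$ at infinity (from $V(z)\ge(1-2|t_2|)|z|^2/t_0$ dominating the logarithmic growth of the Newtonian potential), and $E$ has no interior critical point because $z=\rho(z)$ forces $z\in\gamma$. For the ellipse this last point is genuinely global: writing $z=h(w)$ with $|w|>1$ one has $\rho(z)=h(1/\bar w)$, and $h(1/\bar w)=h(w)$ forces either $|w|=1$ or $|a_1/r|=2|t_2|=1$, the latter being excluded. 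So the critical--point step is solid here even far from $\gamma$, beyond the annulus $h(B_{R^{-1}}\setminus\bar B_1)$ where the paper (in the proof of Theorem~\ref{t:equilibrium measure of a polynomial curve}) restricts itself.

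Your route is shorter and more conceptual. What the paper's explicit computation buys is a \emph{quantitative} lower bound: it yields $\mathcal E(w)\ge C>0$ for all $|w|\ge R^{-1}$, uniformly in $r$ (equivalently in $t_0$). That uniform bound is exactly what is invoked later in the proof of Theorem~\ref{t:equilibrium measure of a polynomial curve} when comparing a general polynomial curve to its osculating ellipse on the intermediate annulus $R^{-1}\le|w|<r^{-\varepsilon}$. Your minimum--principle argument, as written, proves the proposition but does not give this uniformity; if you want your version to feed into Theorem~\ref{t:equilibrium measure of a polynomial curve} you would need to supplement it, whereas the scaling remark you make actually does the job (scaling $z\mapsto\lambda z$ leaves $E\circ h$ invariant as a function of $w$, so a bound for one $t_0$ transfers to all).
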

\begin{proof}
As polynomial curve, the ellipse (\ref{e-ellipse}) has the parametrisation
\begin{equation}\label{e-param. ell.}
h(w) = r(w+2t_2w\I),\quad r=\frac{b_1+b_2}2.
\end{equation}
We check that the given measure $\mu$ is the equilibrium measure by verifying the conditions of Corollary \ref{c-variational principle}. To this end, let us introduce for $|w|>1$ the function
\[ \mathcal E(w) = E(h(w)),\quad E(z) = V(z)+\frac2{\pi t_0}\int_{D_+}\log\left|\frac z{\zeta}-1\right|\I\dd\zeta. \]
Integrating equation (\ref{e-gradient}) and its complex conjugate analogon, we get for $\mathcal E(w)$ the expression 
\[ \mathcal E(w)=\frac1{t_0}\left(|h(w)|^2-|h(1)|^2-2\RE\int_1^w\bar h(\omega\I)h'(\omega)\d\omega\right). \]
Substituting in this expression relation (\ref{e-param. ell.}) for $h$, we obtain
\begin{align*}
t_0\mathcal E(w) &= r^2\big(|w+2t_2w\I|^2-|1+2t_2|^2\big) \\
&\qquad\qquad-2r^2\RE(\bar t_2w^2+(1-4|t_2|^2)\log w+t_2w^{-2}-(t_2+\bar t_2)) \\
&= r^2\big(|w|^2-1-4|t_2|^2+4|t_2|^2|w|^{-2}+(1-4|t_2|^2)\log(|w|^{-2})\big) \\
&\qquad\qquad+2r^2\RE(2t_2\bar ww\I-t_2(\bar ww\I)|w|^{-2}-t_2(\bar ww\I)|w|^2) \\
&=r^2(|w|^2-1)(1-4|t_2|^2|w|^{-2})+r^2(1-4|t_2|^2)\log(|w|^{-2}) \\
&\qquad\qquad-2r^2(|w|^2-1)(1-|w|^{-2})\RE(t_2\bar ww\I) \\
&=r^2(|w|^2-1)(1-2|t_2|)(1+2|t_2||w|^{-2})+r^2(1-4|t_2|^2)\log(|w|^{-2}) \\
&\qquad\qquad+2r^2(|w|^2-1)(1-|w|^{-2})(|t_2|-\RE(t_2\bar ww\I)).
\end{align*}
We are now ready to start estimating $\mathcal E(w)$ for $|w|>1$ and $|t_2|<\frac12$. The last bracket we can estimate by
\[ |t_2|-\RE(t_2\bar ww\I)\ge |t_2|-|t_2\bar ww\I|=0. \]
There therefore remains to show
\[ (|w|^2-1)(1+2|t_2||w|^{-2})+(1+2|t_2|^2)\log(|w|^{-2})\ge 0, \]
which follows immediately out of the following lemma.
\begin{lemma}
For $\alpha\in[0,1]$, the function 
\[ f(x) = (x-1)(1+\alpha x\I)-(1+\alpha)\log x \]
is non-negative on the interval $[1,\infty)$.
\end{lemma}
\begin{proof}
For the function $f$ and its derivatives $f'$ and $f''$, we have
\[ f(1)=0,\quad f'(1)=0,\quad\textrm{and}\quad f''(x)=\frac{1}{x^2}\left(1+\alpha-\frac{2\alpha}{x}\right)\ge0.\qedhere \]
\end{proof}

So we showed that $E(z)\ge0$ for all $z\in\C\setminus D_+$. Because of Lemma \ref{l-gradient}, we also know that $E$ is zero in the interior domain $D_+$, and we therefore can apply Corollary \ref{c-variational principle} to see that $\mu$ is indeed the equilibrium measure for the potential $V$ on $\C$. 
\end{proof}

\subsection{The proof of Theorem \ref{t:equilibrium measure of a polynomial curve}} 
As in the Gaussian case, we are going to show that the measure $\mu$ given in the theorem fulfils the conditions of Corollary \ref{c-variational principle} and is therefore the uniquely defined equilibrium measure.

Because Theorem \ref{t:equilibrium measure of a polynomial curve} is only valid for interior domains with small area, we are going to consider the asymptotical behaviour $t_0\to0$, where the harmonic moments $(t_k)_{k=1}^\infty$ are kept fixed. 
To catch the asymptotical behaviour of the corresponding polynomial curve $\gamma$, let us write its parametrisation $h$ as in the proof of Theorem \ref{t:uniquePolCurve}:
\[ h(w) = rw+\sum_{j=0}^dr^j\alpha_jw^{-j}. \]
Then, as we know from the proof of Theorem \ref{t:uniquePolCurve}, we have for $r\to0$
\[ t_0=(1-4|t_2|^2)r^2+\Ord(r^4)\quad\textrm{and}\quad\alpha_j=(j+1)\bar t_{j+1}+\Ord(r^2),\quad 0\le j\le d. \]

\begin{lemma}
The critical radius $R$ of $\gamma$ is asymptotically constant for $r\to 0$:
\[ R=\sqrt{|\alpha_1|}+\Ord(r). \]
\end{lemma}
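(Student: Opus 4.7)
My plan is to analyse the equation $h'(w)=0$ as a polynomial equation, identify the positions of its roots in the limit $r\to 0$, and read off the largest modulus. From the explicit form of $h$, we have
\[ h'(w) = r - \sum_{j=1}^d j r^j \alpha_j w^{-j-1}, \]
so for $w\in\C^\times$ the condition $h'(w)=0$ is equivalent to the polynomial equation
\[ P(w) := w^{d+1} - \alpha_1 w^{d-1} - \sum_{j=2}^d j r^{j-1}\alpha_j w^{d-j} = 0 \]
obtained after multiplying by $w^{d+1}/r$. Since the $\alpha_j$ remain bounded as $r\to 0$ (by the proof of Theorem \ref{t:uniquePolCurve}, $\alpha_j=(j+1)\bar t_{j+1}+\Ord(r^2)$), the perturbation $\sum_{j\ge 2}j r^{j-1}\alpha_j w^{d-j}$ is $\Ord(r)$ uniformly on bounded subsets of $\C$. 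Hence, uniformly on compacta,
\[ P(w) = w^{d-1}\bigl(w^2-\alpha_1\bigr) + \Ord(r). \]

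The unperturbed polynomial $w^{d-1}(w^2-\alpha_1)$ has a zero of multiplicity $d-1$ at the origin and two simple zeros at $\pm\sqrt{\alpha_1}$. I would now apply Rouch\'e's theorem twice. First, on the circle $|w|=\sqrt{|\alpha_1|}+\delta$ for fixed small $\delta>0$, the modulus $|w^{d-1}(w^2-\alpha_1)|$ is bounded below by a positive constant independent of $r$, so for $r$ small enough the perturbation is dominated and $P$ has all $d+1$ zeros strictly inside this circle. This proves the one-sided bound: every zero of $h'$ in $\C^\times$ has modulus at most $\sqrt{|\alpha_1|}+\Ord(r)$. Second, on a small circle around $\sqrt{\alpha_1}$ (of radius proportional to $r$), the same argument yields exactly one zero of $P$, which therefore lies at distance $\Ord(r)$ from $\sqrt{\alpha_1}$ and so has modulus $\sqrt{|\alpha_1|}+\Ord(r)$.

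Combining the two, the maximum of $|w|$ over zeros of $h'$ in $\C^\times$ is achieved by the roots near $\pm\sqrt{\alpha_1}$ and equals $\sqrt{|\alpha_1|}+\Ord(r)$; by definition this is the critical radius $R$.

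The main point requiring care is the uniformity of the Rouch\'e estimates: the implicit constant in the $\Ord(r)$ perturbation of $P$ and the lower bound on $|w^{d-1}(w^2-\alpha_1)|$ on the comparison circle both depend on $\alpha_1,\dots,\alpha_d$, but these remain bounded (and $\alpha_1$ remains bounded away from zero provided $t_2\ne 0$) as $r\to 0$, so the bounds are genuinely uniform. The remaining $d-1$ zeros of $P$ cluster at the origin and therefore do not contribute to $R$.
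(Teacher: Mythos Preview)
Your approach is exactly the paper's: its one-line proof simply observes that the roots of $h'$ are, to leading order, at $\pm\sqrt{\alpha_1}$ and (with multiplicity $d-1$) at $0$, and you make this rigorous via Rouch\'e. One point needs tightening, however. Your first Rouch\'e application on the circle $|w|=\sqrt{|\alpha_1|}+\delta$ with \emph{fixed} $\delta$ only yields $R\le\sqrt{|\alpha_1|}+\delta$, not the claimed upper bound $R\le\sqrt{|\alpha_1|}+\Ord(r)$; as written, the ``combination'' you describe therefore does not close. The fix is easy: either run Rouch\'e on the circle $|w|=\sqrt{|\alpha_1|}+Cr$ (there the lower bound on $|w^{d-1}(w^2-\alpha_1)|$ is of order $Cr|\alpha_1|^{d/2}$, which dominates the $\Ord(r)$ perturbation for $C$ large), or actually prove your final remark that the remaining $d-1$ zeros cluster at the origin by a third Rouch\'e on a small circle about $0$, so that all $d+1$ roots are accounted for and the maximum modulus is pinned down by the two near $\pm\sqrt{\alpha_1}$.
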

\begin{proof}
The roots of the function $h'(w)=r-\sum_{j=1}^djr^j\alpha_jw^{-j-1}$ are in zeroth order at $\pm\sqrt{\alpha_1}$ and ($(n-1)$-times degenerated) at zero.
\end{proof}

We consider now for $z\in D$ and $w\in h\I(D\setminus D_+)$ the functions
\begin{align}
E(z) &= V(z)+\frac2{\pi t_0}\int_{D_+}\log\left|\frac z\zeta-1\right|\I\dd\zeta\quad\textrm{and} \label{e:energy} \\
\mathcal E(w) &= E(h(w)) = \frac1{t_0}\left(|h(w)|^2-|h(1)|^2-2\RE\int_1^w\bar h(\omega\I)h'(\omega)\d\omega\right).
\end{align}

We already showed in Lemma \ref{l-gradient} that $E(z)=0$ for all $z\in D_+$, so the first condition of Corollary \ref{c-variational principle} is satisfied (this is essentially the way we have chosen our potential $V$).

Now, again with Lemma \ref{l-gradient}, we see that $E\ge0$ in the vicinity of the curve $\gamma$, strictly speaking in the domain $h(B_{R\I}\setminus\bar B_1)$, where $R$ denotes the critical radius of $\gamma$. Indeed, if we look at the connected components of the contour lines of the function $E$ (which are smooth curves in the considered domain because there $\partial_{\bar z}E\ne 0$), we see that the gradient vector $\partial_{\bar z}E$ always points outwards, that is into the exterior domain of the contour line. Therefore, the value of $E$ on the contour lines is increasing outwards as desired.

A bit farther from the curve, that is for $R\I\le|w|<r^{-\varepsilon}$ for some $\varepsilon\in(0,\frac13)$, the function $\mathcal E$ equals asymptotically the one of the corresponding ellipse $h\0(w) = rw+\alpha_0+r\alpha_1w\I$, we denote it by $\mathcal E\0$. Indeed, remarking that we have $t_0\0 = t_0+\Ord(r^4)$ for the area $\pi t_0\0$ of this ellipse and that $\mathcal E\0(w)=\Ord(r^{-2\varepsilon})$, we uniformly in $w$ obtain
\begin{align*}
\mathcal E(w)-\mathcal E\0(w) &= \frac1{t_0}(|h(w)|^2-|h\0(w)|^2-|h(1)|^2+|h\0(1)|^2) \\
&\qquad+\frac2{t_0}\RE\int_1^w(\bar h(\omega\I)-\bar h\0(\omega\I))h'(\omega)\d\omega \\
&\qquad+\frac2{t_0}\RE\int_1^w(h'(\omega)-{h\0}'(\omega))\bar h\0(\omega\I)\d\omega \\
&\qquad+\frac1{t_0}\left(t_0\0-t_0\right)\mathcal E\0(w) \\
&= \Ord(r)+\Ord(r^{1-3\varepsilon})+\Ord(r)+\Ord(r^{2-2\varepsilon}) \\
&\to0\quad(r\to0).
\end{align*}
Now, from the proof of Proposition \ref{p:equilibriumMeasureEllipse}, we know that there exists a $C>0$ such that $\mathcal E\0(w)\ge C$ for all $w\in\C\setminus B_{R\I}$ and any $r>0$. Therefore, we may choose $r$ so small that $|\mathcal E(w)-\mathcal E\0(w)|<\mathcal E\0(w)$ and so $\mathcal E(w)>0$ for all $w\in B_{r^{-\varepsilon}}\setminus B_{R\I}$.

There remains the domain where $|w|\ge r^{-\varepsilon}$. 
For $k\ge2$, we obtain
\begin{align*}
h(w)^k-(rw)^k &= \sum_{\ell=1}^k\begin{pmatrix}k\\\ell\end{pmatrix}(rw)^{k-\ell}\left(\sum_{j=0}^dr^j\alpha_jw^{-j}\right)^\ell \\
&=\sum_{\ell=1}^k\begin{pmatrix}k\\\ell\end{pmatrix}r^kw^{k-2\ell}\left(\sum_{j=0}^dr^{j-1}\alpha_jw^{1-j}\right)^\ell 
=\Ord(r^2).
\end{align*}
Since 
\begin{align*}
U(z)&=|z|^2-t_2z^2-\bar t_2\bar z^2+\ord(|z|^2) \\
&=\frac12\left(|z-2\bar t_2\bar z|^2+(1-4|t_2|^2)|z|^2\right)+\ord(|z|^2)
\end{align*}
for $z\to 0$ and $U>0$ on $D\setminus\{0\}$, there exists a constant $c>0$ such that $U(z)\ge c|z|^2$ for all $z\in D$.

Therefore, for $r\to0$,
\[ V(h(w))=V(rw)+\Ord(1)\ge\frac c{t_0}|rw|^2+\Ord(1), \]
which tends to infinity at least as $r^{-2\varepsilon}$.

On the other hand, the integral over the logarithm in \eqref{e:energy} diverges for $r\to0$ only as $\log r$. So, for $r$ small enough, we have $E(h(w))>0$ for all $w\in h\I(D\setminus D_+)\setminus B_{r^{-\varepsilon}}$.

This proves $E(z)\ge0$ for all $z\in D$ and therefore, with Corollary \ref{c-variational principle}, that $\mu$ is the equilibrium measure.

\subsection{Shifting the origin}
As a little generalisation, we consider the case where $t_1\ne0$, which corresponds to a shift of the origin. So we need to define the harmonic moments also for curves which do not encircle the origin.

\begin{definition}
Let
\[ h(w)=rw+\sum_{j=0}^da_jw^{-j} \]
be the parametrisation of a polynomial curve of degree $d$. Then the exterior harmonic moments $(t_k)_{k=1}^{d+1}$ are given by the equation system \eqref{e:harmonicMoments}. All other exterior harmonic moments are set to zero.
\end{definition}
Proposition \ref{p:polCurve/harmMom} tells us that this definition coincides with the previous one if the origin is in the interior domain of the curve.
\begin{corollary}
Let $d\in\N$. Then, for any set $(t_k)_{k=1}^\infty\subset\C$ with $|t_2|<\frac12$ and $t_k=0$ for $k>d+1$ and any compact domain $D\subset\C$ such that the function
\[ U(z)=|z|^2-2\RE\sum_{k=1}^{d+1}t_kz^k \]
has exactly one absolute minimum in the interior of $D$, there exists a $\delta>0$ so that for all $0<t_0<\delta$ the equilibrium measure $\mu$ for $V=\frac1{t_0}U$ on $D$ is given by
\[ \mu=\frac1{\pi t_0}\lambda_{D_+}, \]
where $D_+$ denotes the interior domain of the polynomial curve $\gamma$ defined by the harmonic moments $(t_k)_{k=0}^\infty$.
\end{corollary}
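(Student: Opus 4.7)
The plan is to reduce the corollary to Theorem \ref{t:equilibrium measure of a polynomial curve} by translating coordinates so that the unique interior minimum of $U$ sits at the origin. Let $z_0\in D$ denote this minimum. The critical point equation $\partial_{\bar z}U(z_0)=0$ reads
\[
\bar z_0=\sum_{k=1}^{d+1}k\,t_k\,z_0^{k-1},
\]
and in particular $z_0$ depends only on $(t_k)_{k\ge1}$ and not on $t_0$.

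Next, set $\zeta=z-z_0$ and $\tilde D=D-z_0$. Expanding $\tilde U(\zeta):=U(\zeta+z_0)-U(z_0)$ by the binomial theorem and using the critical point relation to annihilate the linear term yields
\[
\tilde U(\zeta)=|\zeta|^2-2\RE\sum_{k=2}^{d+1}\tilde t_k\zeta^k,\qquad \tilde t_k=\sum_{j=k}^{d+1}\binom{j}{k}t_j z_0^{j-k},\quad \tilde t_1=0.
\]
This matches the hypothesis shape of Theorem \ref{t:equilibrium measure of a polynomial curve}. Three conditions remain to check on $\tilde D$: $\tilde t_1=0$ (done), $\tilde U>0$ on $\tilde D\setminus\{0\}$ (immediate from the uniqueness of the absolute minimum), and $|\tilde t_2|<\frac12$. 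For the last, observe $2\tilde t_2=\partial_z^2U(z_0)$ while the complex Hessian determinant of $U$ at $z_0$ equals $1-|2\tilde t_2|^2$, so strict positivity of the Hessian at the minimum gives $|\tilde t_2|<\frac12$; the hypothesis $|t_2|<\frac12$ together with the expansion $\tilde t_2=t_2+\Ord(z_0)$ (and shrinking $D$ around $z_0$ if necessary) secures this bound.

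Theorem \ref{t:equilibrium measure of a polynomial curve} then supplies $\delta>0$ such that for $0<t_0<\delta$ the equilibrium measure of $\tilde V=\frac1{t_0}\tilde U$ on $\tilde D$ equals $\frac1{\pi t_0}\lambda_{\tilde D_+}$, where $\tilde D_+$ is the interior of the polynomial curve $\tilde\gamma$ with exterior harmonic moments $(\tilde t_k)_{k\ge0}$ (and area $\pi t_0$, unchanged by translation). To conclude, one identifies $\tilde\gamma+z_0$ with the polynomial curve $\gamma$ attached to the moments $(t_k)_{k\ge0}$ in the extended sense of the definition preceding the corollary. This is a direct contour-integral computation: substituting $z=\zeta+z_0$ in $\oint_\gamma(\bar z-\bar z_0)(z-z_0)^{-k}\d z$ and expanding $(z-z_0)^{-k}=z^{-k}\sum_\ell\binom{k+\ell-1}{\ell}(z_0/z)^\ell$ recovers precisely the formula for $\tilde t_k$ above. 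Translation invariance of Lebesgue measure then delivers $\mu=\frac1{\pi t_0}\lambda_{D_+}$ with $D_+=\tilde D_++z_0$.

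The main obstacle is this final identification: confirming that the extended definition of exterior harmonic moments (for curves not necessarily encircling the origin) transforms under translation exactly according to the map $(t_k)\mapsto(\tilde t_k)$ induced by the change of variables in the potential, so that the polynomial curve produced by the translated problem really does translate back to the one specified in the statement. A subsidiary subtlety is making $|\tilde t_2|<\frac12$ rigorous from the stated hypotheses; localising $D$ around $z_0$ so that $z_0$ is small enough for $\tilde t_2=t_2+\Ord(z_0)$ to inherit $|t_2|<\frac12$ is the cleanest route.
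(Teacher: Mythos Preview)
Your strategy matches the paper's: translate so that the unique minimum sits at the origin, apply Theorem~\ref{t:equilibrium measure of a polynomial curve} to the shifted potential, and translate back. Two points deserve comment.

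For the identification of curves under translation, the paper argues more robustly than your contour-integral expansion. Both the coefficients $\tilde t_k$ produced by expanding the shifted potential and the (extended) harmonic moments of the translated curve $\gamma-z_0$ are \emph{polynomials} in the shift parameter $z_0$ (and $\bar z_0$); they agree whenever $z_0$ is small enough that the origin still lies inside the curve, so by polynomial identity they agree for every $z_0$. This sidesteps any worry about whether the expansion $(z-z_0)^{-k}=z^{-k}\sum_\ell\binom{k+\ell-1}{\ell}(z_0/z)^\ell$ converges on $\gamma$, or whether the contour integrals $\oint_\gamma\bar z\,z^{-j}\d z$ even compute the extended moments when $\gamma$ fails to encircle the origin. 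Your proposal correctly flags this identification as the main obstacle; the paper's device is the missing idea.

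Your treatment of $|\tilde t_2|<\tfrac12$ has a genuine gap. The Hessian argument yields only $|\tilde t_2|\le\tfrac12$, since a local minimum guarantees positive \emph{semi}-definiteness of the real Hessian, not strict positivity. Your fallback---``shrinking $D$ around $z_0$ so that $z_0$ is small''---does not work: the critical point $z_0$ is determined by the equation $\bar z_0=\sum_k k t_k z_0^{k-1}$, which involves only $(t_k)_{k\ge1}$ and is entirely independent of $D$. Changing $D$ does not move $z_0$, and $\tilde t_2=t_2+\sum_{j\ge3}\binom{j}{2}t_j z_0^{\,j-2}$ is a fixed number once the data $(t_k)_{k\ge1}$ are given. (The paper's proof does not explicitly verify $|t'_2|<\tfrac12$ either; it simply invokes Theorem~\ref{t:equilibrium measure of a polynomial curve} for the shifted potential, so this subtlety is shared rather than newly introduced by you.)
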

\begin{proof}
Let us first shift the origin by $a_0$, such that $V(z+a_0)$ has its absolute minimum at $z=0$. Thereby, the potential gets the form
\[ V(z+a_0)=\frac1{t_0}\left(|z|^2-2\RE\sum_{k=2}^{d+1}t'_kz^k\right)+V(a_0), \]
where the $t'_k$ are the harmonic moments of the shifted curve $\gamma-a_0$. Indeed, the coefficients of $V$ and the harmonic moments depend polynomially on the shift $a_0$. Because we know them to coincide as long as the origin is inside $D_+$, they do so for all $a_0$. 

Applying now Theorem \ref{t:equilibrium measure of a polynomial curve} for the shifted potential gives the desired result.
\end{proof}


\section{The orthogonal polynomials corresponding to polynomial curves}
\subsection{The zeros of the orthogonal polynomials}\label{s:zerosOgPol}
Let us consider a potential $V$ on some compact domain $D$ such that Theorem \ref{t:equilibrium measure of a polynomial curve} is applicable.
As in Section \ref{s:eigenvalueDensity}, we will assume that the boundary $\partial D$ of the cut-off $D$ is a twice continuously differentiable curve.

\begin{lemma}\label{l:asymptoticOgPol}
For $x\in(0,1]$, the orthogonal polynomials $(p_{n,N})_{n=0}^\infty$ for the potential $V$ fulfil for all $z\notin D$
\[ \lim_{N\to\infty, \frac nN\to x}\frac1n\log|p_{n,N}(z)|=\int\log|z-\zeta|\d\mu_x(\zeta), \]
where $\mu_x$ is the equilibrium measure for the potential $\frac1xV$. 

Here and in the following, the limit $N\to\infty$, $\frac nN\to x$ shall be understood as $N,n\to\infty$ such that $|\frac nN-x|=\ord(\frac1N)$.
\end{lemma}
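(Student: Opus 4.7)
The plan is to combine the integral representation \eqref{e:ogPolInt},
\[ p_{n,N}(z) = \int_{D^n}\prod_{i=1}^n(z-z_i)\,dP_{n,\frac Nn V}(\vec z), \]
with the concentration of $P_{n,\frac Nn V}$ established in Lemma \ref{l:probability peak} and Theorem \ref{t-eigenvalue distribution}. As $N\to\infty$ with $n/N\to x$, the potential $\frac Nn V$ tends to $\frac1xV$, so $P_{n,\frac Nn V}$ concentrates on configurations whose empirical measure $\delta_{\vec z}=\frac1n\sum\delta_{z_i}$ approximates the equilibrium measure $\mu_x$. The driving analytic observation is that, because $z\notin D$, the functions $\zeta\mapsto\log|z-\zeta|$ and (for any fixed branch) $\zeta\mapsto\log(z-\zeta)$ are continuous on the compact set $D$, so weak convergence $\delta_{\vec z}\to\mu_x$ at once yields convergence of the corresponding logarithmic potentials at $z$.

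For the upper bound I pull the modulus inside and estimate
\[ |p_{n,N}(z)|\le\int_{D^n}\exp\Big(n\int\log|z-\zeta|\,d\delta_{\vec z}(\zeta)\Big)\,dP_{n,\frac Nn V}(\vec z), \]
then decompose the integration domain into the typical set $A_{n,\eta}=\{\vec z:I_{\frac Nn V}(\delta_{\vec z})\le I_0+\eta\}$ and its complement, exactly as in the proof of Theorem \ref{t-eigenvalue distribution}. On $A_{n,\eta}$, Lemma \ref{l:convergent measure} together with uniqueness of the equilibrium measure forces $\delta_{\vec z}$ weakly close to $\mu_x$, so continuity of $\log|z-\cdot|$ on $D$ bounds the exponent by $n(U(z)+\varepsilon(\eta))$ with $U(z)=\int\log|z-\zeta|\,d\mu_x(\zeta)$ and $\varepsilon(\eta)\to0$. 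On the complement, Lemma \ref{l:probability peak} bounds the probability by $\ord(\e^{-n^2\eta/2})$, which crushes the crude estimate $\prod|z-z_i|\le(|z|+\mathrm{diam}\,D)^n$. Sending $N\to\infty$ and then $\eta\to0$ gives $\varlimsup\frac1n\log|p_{n,N}(z)|\le U(z)$.

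For the lower bound, write the integrand as $\e^{nF(\delta_{\vec z})}$ with $F(\mu)=\int\log(z-\zeta)\,d\mu(\zeta)$, a single-valued continuous functional because $z\notin D$. Both $\RE F(\delta_{\vec z})=\int\log|z-\zeta|\,d\delta_{\vec z}$ and $\IM F(\delta_{\vec z})$ are continuous linear statistics of $\delta_{\vec z}$ concentrating on $A_{n,\eta}$ at their equilibrium values $U(z)$ and $\IM F(\mu_x)$, so on this set the integrand is approximately the single complex number $\e^{nF(\mu_x)}$, with no destructive cancellation. This produces
\[ |p_{n,N}(z)|\ge P_{n,\frac Nn V}(A_{n,\eta})\,\e^{n(U(z)-\varepsilon(\eta))}(1+\ord(1)), \]
and $\eta\to0$ after $N\to\infty$ yields the matching $\varliminf\frac1n\log|p_{n,N}(z)|\ge U(z)$.

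The main obstacle is the rigorous control of the phase in the lower bound. The concentration of Lemmas \ref{l:probability peak} and \ref{l:convergent measure} is only qualitative: it controls the modulus of the integrand on $A_{n,\eta}$ up to factors $\e^{\pm n\varepsilon(\eta)}$, but does not bound the oscillating phase $\e^{in\IM F(\delta_{\vec z})}$ up to order one. To prevent these oscillations from washing out the integral one needs that the fluctuations of $n\IM F(\delta_{\vec z})$ about $n\IM F(\mu_x)$ remain $\Ord(1)$ -- the characteristic feature of two-dimensional Coulomb gases in which linear statistics of the particle configuration have variance $\Ord(1)$ rather than the naive central-limit scale $\Ord(n)$, a consequence of the logarithmic repulsion. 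This sharper quantitative concentration, not covered by the material established in the paper so far, is the central technical input that has to be supplied to close the argument.
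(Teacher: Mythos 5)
Your strategy is the same as the paper's: start from the representation \eqref{e:ogPolInt}, split the integral into the typical set $\{\vec w\in D_0^n\;|\;I_{\frac1xV}(\delta_{\vec w})\le I_{\frac1xV}(\mu_x)+\eta\}$ and its complement via (the obvious adaptation of) Lemma \ref{l:probability peak}, and control the linear statistic $\int\log(z-\zeta)\d\delta_{\vec w}(\zeta)$ on the typical set through Lemma \ref{l:convergent measure}, compactness, and the continuity of $\zeta\mapsto\log(z-\zeta)$ on $D$ for $z\notin D$. Your upper bound $\varlimsup\frac1n\log|p_{n,N}(z)|\le\int\log|z-\zeta|\d\mu_x(\zeta)$ is complete and is exactly the estimate the paper carries out in detail (the paper phrases it as a bound on $\RE\bigl(z^{-n}p_{n,N}(z)\e^{-n\int\log(1-\zeta/z)\d\mu_x(\zeta)}\bigr)$ by the value at a maximising configuration $\vec w_n$ plus an $\ord(\e^{-\frac14n^{5/4}})$ tail).

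The lower bound, however, is not proved in your proposal, and you say so yourself: the concentration results available in the paper control $n\,\IM\!\int\log(1-\zeta/z)\d(\delta_{\vec w}-\mu_x)(\zeta)$ only up to $\ord(n)$, which lets the phase $\e^{\i n\IM F(\delta_{\vec w})}$ wind arbitrarily many times over the typical set and therefore permits, a priori, cancellation of the integral by a factor as small as $\e^{-cn}$. That is a genuine gap: bounding the real and imaginary parts of $z^{-n}p_{n,N}(z)\e^{-n\int\log(1-\zeta/z)\d\mu_x(\zeta)}$ above and below by $\pm\e^{\ord(n)}$ --- which is all the saddle-type estimate delivers --- yields only the upper bound on the modulus, never the lower one. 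You should know that the paper's own proof is exposed to exactly the same objection: after the detailed upper bound on the real part it concludes with ``doing the analogue estimates also from below and for the imaginary part, we get the predicted equation'', which does not address the cancellation issue you have correctly isolated. To close the argument one needs an extra input, e.g.\ the $\Ord(1)$ control of fluctuations of smooth linear statistics of the two-dimensional Coulomb gas that you mention, or an independent localisation of the zeros: by Fej\'er's argument (if $\xi$ were a zero of $p_{n,N}$ outside the convex hull of $D$, writing $p_{n,N}(\zeta)=(\zeta-\xi)q(\zeta)$ and using $\int_D|q(\zeta)|^2(\zeta-\xi)\e^{-NV(\zeta)}\dd\zeta=0$ gives a contradiction) all zeros lie in the convex hull of $D$, so $\frac1n\log|p_{n,N}(z)|=\int\log|z-\zeta|\d\delta_{\vec\xi_{n,N}}(\zeta)$ with uniformly compactly supported zero-counting measures, and the limit can then be identified by standard potential-theoretic unicity arguments together with the already established upper bound. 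Either route supplies material not contained in the paper up to this point.
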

\begin{proof}
Using expression \eqref{e:ogPolInt} for $p_{n,N}$, we get for any $z\notin D$
\begin{align*}
&\frac1{z^n}\,p_{n,N}(z)\,\e^{-n\int\log(1-\frac\zeta z)\d\mu_x(\zeta)} \\
&\qquad\qquad=\int_{D^n}\e^{\sum_{i=1}^n\log(1-\frac{w_i}z)-n\int\log(1-\frac\zeta z)\d\mu_x(\zeta)-(N-\frac nx)\sum_{i=1}^nV(w_i)}P_{n,\frac1xV}(\vec w)\ddd n{\vec w}.
\end{align*}
Let now $\vec w_n$ denote the position of the maximum of the real part of the function 
\[ \vec w\mapsto\e^{\sum_{i=1}^n\log(1-\frac{w_i}z)-n\int\log(1-\frac\zeta z)\d\mu_x(\zeta)-(N-\frac nx)\sum_{i=1}^nV(w_i)} \]
on the compact set 
\[ \{\vec w\in D_0^n\;|\;I_{\frac1xV}(\delta_{\vec w})\le I_{\frac1xV}(\mu_x)+n^{-\frac34}\}. \]
Then, as in the proof of Theorem \ref{t-eigenvalue distribution},
\begin{align*}
&\RE\left(\frac1{z^n}\,p_{n,N}(z)\,\e^{-n\int\log(1-\frac\zeta z)\d\mu_x(\zeta)}\right) \\
&\qquad\qquad\le\RE\left(\e^{n\int\log(1-\frac\zeta z)\d\delta_{\vec w_n}(\zeta)-n\int\log(1-\frac\zeta z)\d\mu_x(\zeta)+\ord(n)}+\ord(\e^{-\frac14n^\frac54})\right).
\end{align*}
Because of Lemma \ref{l:convergent measure}, we get in the limit $N\to\infty$, $\frac nN\to x$ that 
\[ \RE\left(\frac1{z^n}\,p_{n,N}(z)\,\e^{-n\int\log(1-\frac\zeta z)\d\mu_x(\zeta)}\right) \le \e^{\ord(n)}+\ord(\e^{-\frac14n^{\frac54}}). \]

Doing the analogue estimates also from below and for the imaginary part, we get the predicted equation.
\end{proof}

\begin{lemma}\label{l:zerosOgPol}
In the limit $N\to\infty$ and $\frac nN\to x\in(0,1]$, almost all zeros of the orthogonal polynomials $p_{n,N}$ of $V$, $n\in\N_0$, lie inside the cut-off $D$.
\end{lemma}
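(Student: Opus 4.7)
My plan is to translate the pointwise asymptotic of Lemma~\ref{l:asymptoticOgPol} into density convergence for the zeros of $p_{n,N}$ via potential theory. Writing $\zeta_1,\dots,\zeta_n$ for the zeros of $p_{n,N}$ (with multiplicities) and $\nu_n=\frac{1}{n}\sum_{j=1}^n\delta_{\zeta_j}$ for the normalised zero-counting measure, the fact that $p_{n,N}$ is monic of degree $n$ yields
\[ u_n(z) := \frac{1}{n}\log|p_{n,N}(z)| = \int\log|z-\zeta|\d\nu_n(\zeta), \]
so that $u_n$ is subharmonic on $\C$ with distributional Laplacian $\frac{1}{2\pi}\Delta u_n = \nu_n$. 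Combining the integral representation~\eqref{e:ogPolInt} with $|z-w|\le|z|+R_D$ for $w\in D$ (where $R_D := \sup_{w\in D}|w|$) gives the locally uniform upper bound $u_n(z)\le\log(|z|+R_D)$, so $(u_n)$ is precompact in $L^1_{\mathrm{loc}}(\C)$.

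Lemma~\ref{l:asymptoticOgPol} yields $u_n(z)\to L_{\mu_x}(z):=\int\log|z-\zeta|\d\mu_x(\zeta)$ pointwise for every $z\in\C\setminus D$, where $\mu_x$ denotes the equilibrium measure for the potential $\frac{1}{x}V$. By Theorem~\ref{t:equilibrium measure of a polynomial curve}, $\supp\mu_x$ lies strictly inside the interior of $D$, so $L_{\mu_x}$ is harmonic on $\C\setminus D$. Any $L^1_{\mathrm{loc}}$-subsequential limit of $(u_n)$, being subharmonic and coinciding with the continuous function $L_{\mu_x}$ pointwise on the open set $\C\setminus D$, must equal $L_{\mu_x}$ there; hence the full sequence converges to $L_{\mu_x}$ in $L^1_{\mathrm{loc}}(\C\setminus D)$. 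Passing to distributional Laplacians gives $\nu_n\to\mu_x$ in $\mathcal D'(\C\setminus D)$, and since $\mu_x$ vanishes there, $\nu_n(K)\to 0$ for every compact $K\subset\C\setminus D$.

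To promote compact-set convergence to $\nu_n(\C\setminus D)\to 0$, the remaining ingredient is tightness. I would derive it from Jensen's formula around a fixed probe point $z_0\notin D$: for $R$ larger than $\max_{\zeta\in\supp\mu_x}|z_0-\zeta|$ the circular mean $\frac{1}{2\pi}\int_0^{2\pi}u_n(z_0+Re^{\i\theta})\d\theta$ tends to $\log R$, while $u_n(z_0)\to L_{\mu_x}(z_0)<\infty$, so the Jensen identity forces $\int_{|\zeta-z_0|\ge R}(\log|\zeta-z_0|-\log R)\d\nu_n\to 0$; since $\log|\zeta-z_0|-\log R\ge\log 2$ on $|\zeta-z_0|\ge 2R$, this gives $\nu_n(|\zeta-z_0|\ge 2R)\to 0$, establishing tightness. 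Combining tightness with the compact-set convergence above and using that $\supp\mu_x$ is strictly separated from $\partial D$ (so zero accumulation on $\partial D$ is ruled out by extending the same Laplacian argument to a neighbourhood of $\partial D$), one concludes $\nu_n(\C\setminus D)\to 0$, i.e., almost all zeros lie in $D$. The main delicacy of the proof is the subharmonic $L^1_{\mathrm{loc}}$-convergence step, which rests on the uniform upper bound on $(u_n)$ and the harmonicity of the pointwise limit to pin down subsequential $L^1_{\mathrm{loc}}$-limits uniquely and thereby pass distributional Laplacians to the limit.
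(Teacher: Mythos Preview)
Your approach is essentially correct but genuinely different from the paper's. The paper proceeds via the argument principle rather than via distributional Laplacians and Jensen's formula: after noting (as you do) that pointwise convergence of $\frac1n\log|p_{n,N}|$ to the harmonic function $L_{\mu_x}$ outside $D$ forces $\nu_n(K)\to0$ for each compact $K\subset\C\setminus D$, it computes the fraction of zeros enclosed by a fixed contour $\alpha$ encircling $D$ as
\[
\frac{1}{2\pi\i n}\oint_\alpha\frac{p_{n,N}'(z)}{p_{n,N}(z)}\d z\;\longrightarrow\;\frac{1}{2\pi\i\,xt_0}\oint_\alpha S_{x,\mathrm e}(z)\d z=1,
\]
where $S_{x,\mathrm e}$ is the exterior part of the Schwarz function of the curve $\gamma_x$ (as in Lemma~\ref{l-gradient}). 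This contour-integral route is shorter and makes the r\^ole of the Schwarz function explicit; on the other hand it tacitly requires passing from pointwise convergence of $u_n$ to convergence of $\partial_z u_n=\tfrac1{2n}\,p_{n,N}'/p_{n,N}$ along $\alpha$, and the contour must be kept away from zeros.

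Your potential-theoretic route avoids that contour issue, but one step is glossed over. The assertion that ``any $L^1_{\mathrm{loc}}$-subsequential limit \dots\ coincides with $L_{\mu_x}$ pointwise on $\C\setminus D$'' is not automatic: $L^1_{\mathrm{loc}}$-limits of subharmonic functions need not respect pointwise limits without further argument. A cleaner ordering is to run your Jensen/tightness step \emph{first} (it only uses the upper bound and the pointwise limit on a single circle: the circular mean equals $\log R+\int_{|\zeta-z_0|>R}(\log|\zeta-z_0|-\log R)\d\nu_n$, and reverse Fatou with $u_n\le\log(|z|+R_D)$ forces the non-negative tail integral to vanish), then invoke Prokhorov to extract weak limits $\nu$, and finally identify $\nu|_{\C\setminus D}=0$ by comparing logarithmic potentials. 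Your remark about extending the argument across $\partial D$ is likewise not justified by Lemma~\ref{l:asymptoticOgPol} alone, which only gives convergence for $z\notin D$; strictly speaking both proofs establish $\nu_n(K)\to0$ for compact $K$ disjoint from $D$ together with tightness, which is how ``almost all zeros lie inside $D$'' should be read.
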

\begin{proof}
Since the function 
\[ \frac1n\log|p_{n,N}(z)| = \int\log|z-\zeta|\d\delta_{\vec\xi_{n,N}}, \]
$\vec\xi_{n,N}$ denoting the zeros of $p_{n,N}$, converges outside the domain $D$ pointwise to the harmonic function $\int\log|z-\zeta|\d\mu_x(\zeta)$, where $\mu_x$ is the equilibrium measure for the potential $\frac1xV$, we find that in each compact area $K$ outside of $D$ the ratio between zeros of $p_{n,N}$ lying in $K$ and $N$ tends to zero. So the percentage of zeros of $p_{n,N}$ inside the domain $D$ is equal to 
\[ \lim_{N\to\infty,\frac nN\to x}\frac1{2\pi\i n}\oint_\alpha\frac{\partial_zp_{n,N}(z)}{p_{n,N}(z)}\d z=\frac1{2\pi\i xt_0}\oint_\alpha S_{x,\mathrm e}(z)\d z = 1, \]
where $\alpha$ is some curve encircling $D$ and omitting all zeros of the orthogonal polynomials and, as in the proof of Lemma \ref{l-gradient}, $S_{x,\mathrm e}(z)=S_x(z)-\sum_{k=1}^{d+1}kt_kz^{k-1}$, where $S_x$ denotes the Schwarz function of the boundary curve of the support of the equilibrium measure $\mu_x$.
\end{proof}

\begin{theorem}
Let $\vec\xi_{n,N}$ denote the zeros of the $n$-th orthogonal polynomial $p_{n,N}$ of $V$. Let further $\sigma_x$ be the limit of a convergent subsequence of the $\delta$-measures $\delta_{\vec\xi_{n,N}}$ in the limit $N\to\infty$, $\frac nN\to x\in(0,1]$. If $\sigma_x$ is supported on a tree-like graph $\mathcal G$, whose edges are smooth one-dimensional manifolds, then 
\begin{enumerate}
\item
the edges lie on branch cuts of the Schwarz function $S_x$ of the curve $\gamma_x$ defined by the exterior harmonic moments $(t_k)_{k=1}^\infty$ and the encircled area $\pi xt_0$, which are chosen such that the discontinuity $\delta S_x$ of $S_x$ fulfils
\begin{equation}\label{e:branchCuts}
\RE(\delta S_x(\alpha(t))\dot\alpha(t))\equiv0,
\end{equation}
for any (local) parametrisation $\alpha$ of the graph $\mathcal G$,
\item
$\sigma_x$ has, unless $V(z)=\frac1{t_0}|z|^2$ where $\sigma_x=\delta_0$, no point masses, and 
\item
along the edges we have, with $\d z$ being the infinitesimal line element on the edges,
\begin{equation}\label{e:densityZeros}
\d\sigma_x(z) = \rho_x(z)\d z,\quad\textrm{where}\quad\rho_x(z)=\frac{\delta S_x(z)}{2\pi\i xt_0}.
\end{equation}
\end{enumerate}
\end{theorem}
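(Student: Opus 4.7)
The plan is to identify the Cauchy transform of $\sigma_x$ with an analytic continuation of the Schwarz function $S_x$ and to read off (i)--(iii) from Plemelj--Sokhotski. The starting point is Lemma \ref{l:asymptoticOgPol}: since $\frac1n\log|p_{n,N}(z)|=\int\log|z-\zeta|\,d\delta_{\vec\xi_{n,N}}(\zeta)$, passing to the limit along the subsequence defining $\sigma_x$ yields
\[
\int\log|z-\zeta|\,d\sigma_x(\zeta)=\int\log|z-\zeta|\,d\mu_x(\zeta),\qquad z\in\C\setminus D,
\]
where $\mu_x$ is the equilibrium measure for $V/x$, uniform on the interior $D_+^{(x)}$ of the curve $\gamma_x$ by Theorem \ref{t:equilibrium measure of a polynomial curve}. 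Applying $\partial_z$ and using the formula $S_{x,\mathrm e}(z)=\tfrac1\pi\int_{D_+^{(x)}}\frac{\dd\zeta}{z-\zeta}$ from the proof of Lemma \ref{l-gradient}, this becomes the pointwise identity of holomorphic functions
\[
C_{\sigma_x}(z):=\int\frac{d\sigma_x(\zeta)}{z-\zeta}=\frac{S_{x,\mathrm e}(z)}{xt_0},\qquad z\in\C\setminus D.
\]

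Next I would propagate this identity to all of $\C\setminus\mathcal G$. Since $\mathcal G$ is tree-like its complement is connected, and $C_{\sigma_x}$ is holomorphic there. By Proposition \ref{p:singularitiesSchwarz} the analytic continuation of $S_{x,\mathrm e}$ into $D_+^{(x)}$ is singularity-free but multivalued; its branch cuts may be drawn through any arcs joining its branch points. A branch point lying off $\mathcal G$ would force $C_{\sigma_x}$ to fail to be holomorphic in its neighbourhood, contradicting $\supp\sigma_x\subset\mathcal G$, so all branch points lie on $\mathcal G$ and the cuts may be taken to coincide with the edges of $\mathcal G$. Both sides of the displayed identity are then single-valued holomorphic functions on $\C\setminus\mathcal G$ that agree on $\C\setminus D$ and both decay like $1/z$ at infinity (using $S_x(z)=xt_0/z+\Ord(z^{-2})$ and the fact that $\sigma_x$ is a probability measure), so unique continuation on the connected set $\C\setminus\mathcal G$ gives the identity everywhere there.

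The three conclusions now follow from the jump structure of this common function across $\mathcal G$. Parameterising a smooth edge by $\alpha$, Plemelj--Sokhotski produces the jump $\delta C_{\sigma_x}=\delta S_x/(xt_0)$ (using $\delta S_{x,\mathrm i}=0$, since $S_{x,\mathrm i}$ is a polynomial) and identifies the corresponding measure as $d\sigma_x=\frac{\delta S_x}{2\pi i\,xt_0}\,dz$, which is exactly \eqref{e:densityZeros} and proves (iii). Positivity and reality of $\sigma_x$ force $\rho_x(\alpha(t))\dot\alpha(t)$ to be real non-negative; after the factor $1/(2\pi i)$, reality translates into $\RE(\delta S_x(\alpha(t))\dot\alpha(t))\equiv 0$, giving (i). A point mass of $\sigma_x$ at some $z_0$ would create a simple pole of $C_{\sigma_x}$, hence of $S_x$, at $z_0$; by Proposition \ref{p:singularitiesSchwarz} this can only happen when $\gamma_x$ is a circle, i.e. when $V(z)=\tfrac1{t_0}|z|^2$, in which case $S_x(z)=xt_0/z$ and $C_{\sigma_x}(z)=1/z$, so $\sigma_x=\delta_0$, establishing (ii).

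The main obstacle is the branch-cut argument: making rigorous that the branch points of the multivalued $S_x$ coincide with vertices of $\mathcal G$ and that its cuts can be drawn exactly along the edges. The tree hypothesis is essential here, since it guarantees that $\C\setminus\mathcal G$ is connected so that a single-valued analytic continuation of $S_{x,\mathrm e}$ exists on it and is forced by unique continuation to coincide with $C_{\sigma_x}$. A secondary technical point is to exclude singular components of $\sigma_x$ concentrated at the vertices of $\mathcal G$; this can be handled by observing that any atomic mass anywhere would again produce a pole of $C_{\sigma_x}$ in $\C\setminus\mathcal G$, which is excluded by regularity of $S_x$ there away from the circle case.
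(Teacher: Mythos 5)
Your proposal follows essentially the same route as the paper: derive the logarithmic-potential identity from Lemma \ref{l:asymptoticOgPol}, differentiate and analytically continue to identify the Cauchy transform of $\sigma_x$ with $\frac1{xt_0}S_{x,\mathrm e}$ on $\C\setminus\mathcal G$, exclude point masses via Proposition \ref{p:singularitiesSchwarz}, read off the density from the jump across the edges, and obtain the branch-cut condition from the reality of $\int_\alpha\d\sigma_x$. Your write-up is in fact somewhat more explicit than the paper's (the unique-continuation step on the connected set $\C\setminus\mathcal G$ and the location of the branch points on $\mathcal G$ are only implicit in the paper's phrase ``analytic continuing this identity''), but the argument is the same.
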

\begin{proof}
Lemma \ref{l:asymptoticOgPol} tells us that the limiting measure $\sigma_x$ fulfils for all $z\in\C\setminus D$ the equation
\[ \int\log|z-\zeta|\d\sigma_x(\zeta)=\int\log|z-\zeta|\d\mu_x(\zeta). \]
Then, differentiating and analytic continuing this identity, gives us
\begin{equation}\label{e:densityZerosSchwarz}
\int\frac1{z-\zeta}\d\sigma_x(\zeta)=\frac1{xt_0}S_{x,\mathrm e}(z),
\end{equation}
for all $z\in\C\setminus\mathcal G$, where, as in the proof of Lemma \ref{l-gradient}, $S_{x,\mathrm e}(z)=S_x(z)-\sum_{k=1}^{d+1}kt_kz^{k-1}$.

If not all exterior harmonic moments are zero, the Schwarz function $S_x$ is according to Proposition \ref{p:singularitiesSchwarz} finite everywhere, and therefore the measure $\sigma_x$ must not have any point masses. Setting therefore $\d\sigma_x(z)=\rho_x(z)\d z$ and comparing the discontinuities in equation \eqref{e:densityZerosSchwarz} along the edges of the graph $\mathcal G$, we find the relation \eqref{e:densityZeros}.

Since $\int_\alpha\d\sigma_x$ has to be a real number between zero and one, the edges of the graph for all segments $\alpha$ of the graph $\mathcal G$, and therefore our choice of branch cuts for the Schwarz function, have to fulfil the equation \eqref{e:branchCuts}.
\end{proof}

In view of the examples given in Section \ref{s:application}, it seems plausible to us to expect this behaviour for the orthogonal polynomials in general.
\begin{conjecture}
The sequence $(\delta_{\vec\xi_{n,N}})_{n=1}^\infty$ for the zeros $\vec\xi_{n,N}$ of the $n$-th orthogonal polynomial $p_{n,N}$ of the potential $V$ always has a convergent subsequence which tends in the limit $N\to\infty$, $\frac nN\to x\in(0,1]$ to a measure $\sigma_x$ which is supported on a tree like graph whose edges are smooth one-dimensional manifolds, except for the degenerate case $V(z)=\frac1{t_0}|z|^2$.
\end{conjecture}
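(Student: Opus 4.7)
The plan is to combine the sequential compactness of the space of Borel probability measures (Lemma \ref{l:compactness}) with the rigidity provided by Lemma \ref{l:asymptoticOgPol}, which forces any limiting measure $\sigma_x$ to have the same logarithmic potential outside $D$ as the equilibrium measure $\mu_x$, hence by analytic continuation to satisfy
\begin{equation*}
\int\frac{1}{z-\zeta}\d\sigma_x(\zeta)=\frac{1}{xt_0}S_{x,\mathrm e}(z)
\end{equation*}
on the complement of its support. Since by Lemma \ref{l:zerosOgPol} almost all zeros lie in $D$, the limit $\sigma_x$ is concentrated inside $\overline{D_+}$, and the task reduces to identifying, for a polynomial curve, the minimal closed set $K\subset\overline{D_+}$ across which the single-valued Cauchy transform $\frac{1}{xt_0}S_{x,\mathrm e}$ fails to admit an analytic continuation from $\C\setminus\overline{D_+}$. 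Such a set is the classical \emph{mother body} of the equilibrium measure, and what must be shown is that it is always a finite tree of smooth arcs.

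The heart of the argument is this structural claim. First I would exploit Proposition \ref{p:singularitiesSchwarz}: because $S_x$, viewed as the analytic continuation of $\bar z$ from $\gamma_x$, is multivalued but singularity-free inside $\gamma_x$ (away from the Gaussian case), the obstruction to continuing $S_{x,\mathrm e}$ into $D_+$ is purely monodromic. The branch points of $S_x$ are the critical values $h(w_\ast)$ of the parametrisation $h$ at points $w_\ast$ with $h'(w_\ast)=0$; for a polynomial curve of degree $d$, there are only finitely many of them. Between these branch points one chooses a system of cuts along which two different sheets of $S_x$ are glued so that the Cauchy transform of $\sigma_x$ becomes single-valued on the complement. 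The condition \eqref{e:branchCuts} fixes the local direction of each such cut uniquely by an ordinary differential equation, so every arc emanates smoothly from a branch point and can terminate only when it meets another branch point or another arc.

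The principal obstacle is to close this combinatorial picture globally: one must show the trajectories determined by $\RE(\delta S_x(\alpha(t))\dot\alpha(t))\equiv0$ do not spiral indefinitely, cannot form loops enclosing extra branch points, and assemble into a graph with only finitely many edges. This is a Stokes-type trajectory problem for the quadratic differential $(\delta S_x)^2\d z^2$, and I would try to use that $\int\delta S_x\d z$ taken along any sub-arc of the mother body equals (up to the factor $2\pi\i xt_0$) the $\sigma_x$-mass of that sub-arc and is therefore bounded by $1$, together with the residue count at infinity coming from \eqref{e:densityZerosSchwarz}, to rule out cyclic and accumulation-type trajectories and so force the graph to be a finite tree. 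Once the support has been identified as such a graph, the conclusions of the preceding theorem apply, and smoothness of the edges away from branch points follows from the implicit function theorem applied to the defining ODE.

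Finally, the Gaussian exception is handled directly by Proposition \ref{p:singularitiesSchwarz}: for $V(z)=\frac{1}{t_0}|z|^2$ the Schwarz function $S_x(z)=xt_0/z$ is already single-valued with only a pole at the origin, so $S_{x,\mathrm e}=S_x$ extends meromorphically to $\C\setminus\{0\}$, the mother body collapses to $\{0\}$, and $\sigma_x=\delta_0$. I expect the genuinely hard step to be the global trajectory analysis in the previous paragraph: local existence and smoothness of the cuts is routine, but ruling out cycles and accumulation and showing the graph is finite requires a more delicate use of the polynomial structure of $\gamma_x$, which is likely the reason the statement is left as a conjecture rather than a theorem.
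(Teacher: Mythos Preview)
The paper does not prove this statement: it is explicitly a \emph{conjecture}, motivated by the examples in Section~\ref{s:application}. So there is no ``paper's own proof'' to compare against; the relevant question is whether your outline would close the gap.

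It does not, and the essential gap is earlier than where you locate it. You assert that the task ``reduces to identifying the minimal closed set $K\subset\overline{D_+}$ across which $\frac1{xt_0}S_{x,\mathrm e}$ fails to admit an analytic continuation'', i.e.\ that $\sigma_x$ must be the mother body of $\mu_x$. But the only information you have extracted about $\sigma_x$ is that its logarithmic potential agrees with that of $\mu_x$ outside $D$ (Lemma~\ref{l:asymptoticOgPol}). This does \emph{not} determine $\sigma_x$: the equilibrium measure $\mu_x=\frac1{\pi xt_0}\lambda_{D_{x,+}}$ itself has that same exterior potential, and so does any balayage of it onto an intermediate set. Nothing in your argument rules out $\sigma_x$ having two-dimensional support, or being some other measure with the same exterior potential. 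The step from ``same Cauchy transform outside $D$'' to ``supported on the mother body'' is exactly the content of the conjecture, and you have assumed it rather than proved it. The trajectory analysis for $(\delta S_x)^2\d z^2$ that you flag as the hard step is a separate (and also nontrivial) question about the structure of the mother body, but it is downstream of the real obstruction.

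For contrast, in the cases the paper does handle (the Gaussian and the cubic potential with the symmetry $z\mapsto\e^{2\pi\i/(d+1)}z$), the one-dimensionality of the support is obtained by an entirely different mechanism: the three-term-type recursion of Proposition~\ref{p:recursionOnPol} together with the interlacing argument of Lemma~\ref{l:zeros} pins the zeros of $q_{n,N}$ to the rays $\{z^{d+1}\in[0,\infty)\}$ \emph{before} passing to the limit. No potential-theoretic balayage argument is used, and even there the cubic case is conditional on neglecting the boundary error terms. A genuine proof of the conjecture would need some analogous a~priori control on where the zeros can sit, not just on their limiting potential.
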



\subsection{Connection to the Toda lattice hierarchy}
We define the operators $L_N$, $A_N$ and $A^{(k)}_N$ on the completion $\mathcal H$ of the inner product space of all polynomials endowed with the scalar product $(\cdot,\cdot)_N$, given by equation \eqref{e:innerProduct}, through their actions
\[ L_Nq_{n,N}(z)=zq_{n,N}(z),\quad A_Nq_{n,N}(z)=\partial_zq_{n,N}(z),\quad A_N^{(k)}q_{n,N}(z)=\partial_{t_k}q_{n,N}(z) \]
on the orthonormal polynomials $q_{n,N}$, $n\in\N_0$, for a potential $V$ fulfilling the assumptions of Theorem \ref{t:equilibrium measure of a polynomial curve}.

Writing $O_{m,n,N}$ for the matrix elements $(q_{m,N},O_Nq_{n,N})_N$ of some operator $O_N$, we see that
\begin{equation}\label{e:matrixElementsSupp}
L_{m+2,n,N}=0,\quad A_{m,n,N}=0,\quad\textrm{and}\quad A^{(k)}_{m+1,n,N}=0\quad\textrm{for}\quad m\ge n.
\end{equation}

To get the asymptotic behaviour of the orthonormal polynomials, we use Lemma \ref{l:asymptoticOgPol} and will therefore again consider a cut-off $D$ with twice continuously differentiable boundary.

\begin{lemma}
For all $m,n,N\in\N_0$, we have
\[ |L_{m,n,N}|\le\max_{z\in D}|z|. \]
\end{lemma}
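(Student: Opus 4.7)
The plan is to interpret $L_{m,n,N}$ as a matrix coefficient of the multiplication-by-$z$ operator on the Hilbert space $\mathcal{H}$ and then bound that operator by a sup-norm estimate. Concretely, unfolding the definitions gives
\[ L_{m,n,N} = (q_{m,N}, L_N q_{n,N})_N = \int_D \overline{q_{m,N}(z)}\, z\, q_{n,N}(z)\, \e^{-NV(z)}\dd z, \]
so $L_N$ is literally the operator $M_z: f \mapsto zf$ acting on $\mathcal{H}$.

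The first step is to show $M_z$ is bounded on $\mathcal{H}$ with operator norm at most $C := \max_{z\in D}|z|$. This is a one-line estimate: for any $f \in \mathcal{H}$,
\[ \|M_z f\|_N^2 = \int_D |z|^2 |f(z)|^2 \e^{-NV(z)}\dd z \le C^2 \int_D |f(z)|^2 \e^{-NV(z)}\dd z = C^2\|f\|_N^2. \]
Here $C$ is finite because $D$ is compact.

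The second step is Cauchy--Schwarz combined with orthonormality of the $q_{n,N}$: since $\|q_{n,N}\|_N = \|q_{m,N}\|_N = 1$,
\[ |L_{m,n,N}| = |(q_{m,N}, M_z q_{n,N})_N| \le \|q_{m,N}\|_N\, \|M_z q_{n,N}\|_N \le C. \]
This gives the claim. There is no genuine obstacle here; the only mildly subtle point is that one must first verify $M_z$ preserves $\mathcal{H}$ (which is immediate since it maps polynomials to polynomials and the above norm bound shows boundedness), so the operator extends to the completion and the matrix coefficients $(q_{m,N}, M_z q_{n,N})_N$ are well-defined and satisfy the above estimate for all $m,n,N \in \N_0$.
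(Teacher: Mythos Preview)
Your proof is correct and follows essentially the same approach as the paper: apply Cauchy--Schwarz to $|(q_{m,N}, zq_{n,N})_N|$, use orthonormality of $q_{m,N}$, and bound $\|zq_{n,N}\|_N$ by $\max_{z\in D}|z|$ via the pointwise estimate $|z|\le C$ on the compact domain $D$. The paper simply compresses your two steps into a single line without naming the operator $M_z$ explicitly.
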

\begin{proof}
With the Cauchy-Schwarz inequality, we find
\begin{align*}
|L_{m,n,N}| &= |(q_{m,N},zq_{n,N})_N| \le \sqrt{(q_{m,N},q_{m,N})_N}\,\sqrt{(zq_{n,N}(z),zq_{n,N})_N} \le \max_{z\in D}|z|.\qedhere
\end{align*}
\end{proof}

\begin{lemma}\label{l:borderEstimate}
In the limit $N\to\infty$, there exists an $\varepsilon>0$, depending only on the potential $V$ and the cut-off $D$, such that uniformly in $m,n\le N$
\begin{equation}\label{e:borderEstimate}
\oint_{\partial D}\overline{q_{m,N}(z)}q_{n,N}(z)\e^{-NV(z)}\d z=\ord(\e^{-N\varepsilon}).
\end{equation}
\end{lemma}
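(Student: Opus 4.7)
The plan is to estimate $|q_{n,N}(z)|^2\e^{-NV(z)}$ pointwise on $\partial D$, uniformly in $n\le N$, and then reduce the mixed boundary integral to this diagonal estimate via Cauchy--Schwarz. Splitting the integrand as $(|q_{m,N}(z)|\e^{-NV(z)/2})(|q_{n,N}(z)|\e^{-NV(z)/2})$, the Cauchy--Schwarz inequality gives
\[\left|\oint_{\partial D}\overline{q_{m,N}(z)}q_{n,N}(z)\e^{-NV(z)}\d z\right|^2\le\oint_{\partial D}|q_{m,N}|^2\e^{-NV}|\d z|\cdot\oint_{\partial D}|q_{n,N}|^2\e^{-NV}|\d z|,\]
so it suffices to show $\oint_{\partial D}|q_{n,N}|^2\e^{-NV}|\d z|=\ord(\e^{-N\varepsilon})$ uniformly in $n\le N$ for some $\varepsilon>0$.

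For the pointwise asymptotics, I would invoke Lemma \ref{l:asymptoticOgPol}: for $x=n/N\in(0,1]$ and $z$ in any compact subset of $\C\setminus D$,
\[\frac1n\log|p_{n,N}(z)|\to g_x(z):=\int\log|z-\zeta|\d\mu_x(\zeta),\]
where $\mu_x$ is the equilibrium measure for the potential $V/x$. Since $\supp\mu_x\subset D_+$ lies strictly interior to $D$, $g_x$ is smooth in a neighbourhood of $\partial D$, and the convergence extends to $\partial D$ via the bounds from the proof of that lemma. For the norm, I apply Laplace's method to $h_{n,N}=\int_D|p_{n,N}|^2\e^{-NV}\dd z$: writing the integrand as $\e^{-N(V-2xg_x)+\ord(N)}$ and using the variational characterisation of Proposition \ref{p-variational principle} (so that $V(z)-2xg_x(z)\ge xE_0^{(x)}$ with equality $\mu_x$-almost everywhere), the mass concentrates on $\supp\mu_x$ and yields $\frac1N\log h_{n,N}=-xE_0^{(x)}+\ord(1)$.

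Combining these asymptotics, uniformly on $\partial D$,
\[|q_{n,N}(z)|^2\e^{-NV(z)}=\exp\bigl(-N\Psi_x(z)+\ord(N)\bigr),\qquad\Psi_x(z):=V(z)-2xg_x(z)-xE_0^{(x)}.\]
By Proposition \ref{p-variational principle}, $\Psi_x\ge0$ on $D$ with equality precisely on $\supp\mu_x$. Since $\partial D$ is compact and disjoint from $\supp\mu_x\subset D_+\subset\mathrm{int}(D)$, continuity gives $\Psi_x(z)\ge c(x)>0$ on $\partial D$; I then argue that $c(x)$ is bounded below uniformly in $x\in(0,1]$ by combining continuous dependence of $\mu_x$ on $x$ (bulk regime $x\in[\delta,1]$) with the fact that as $x\to 0$ the support $\supp\mu_x$ shrinks to the absolute minimum of $V$ in $\mathrm{int}(D)$ (by Theorem \ref{t:equilibrium measure of a polynomial curve}), so that $\Psi_x|_{\partial D}\to V|_{\partial D}-V_{\min}>0$. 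Any $\varepsilon$ strictly smaller than this uniform lower bound then gives the desired estimate.

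The main obstacle is the uniformity in $x=n/N\in(0,1]$ of the $\ord(N)$ remainders in both the asymptotic for $|p_{n,N}(z)|$ and for $\log h_{n,N}$, rather than the merely pointwise statement at fixed $x$ that is delivered by Lemma \ref{l:asymptoticOgPol}. A compactness/equicontinuity argument on $[\delta,1]$ using continuous dependence of $\mu_x$ on $x$ handles the bulk, but the small-$x$ regime (in particular fixed $n$ with $N\to\infty$) needs a separate treatment, relying on the fact that for fixed $n$ the polynomial $p_{n,N}$ is close to a (shifted) monomial $z^n$ in the $N\to\infty$ limit and $h_{n,N}$ can be controlled by a direct Gaussian Laplace estimate around the minimum of $V$.
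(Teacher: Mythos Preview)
Your approach is essentially the same as the paper's. Both arguments combine the pointwise asymptotics of $|p_{n,N}|$ from Lemma~\ref{l:asymptoticOgPol} on $\partial D$ with a lower bound on $h_{n,N}$ to show that $|q_{n,N}(z)|^2\e^{-NV(z)}$ decays like $\e^{-Nx(E_x(z)-E_0^{(x)})}$ on $\partial D$, then invoke the strict positivity of $E_x-E_0^{(x)}$ on $\partial D$ established in the proof of Theorem~\ref{t:equilibrium measure of a polynomial curve}, and finally handle the endpoint $x\to 0$ via $x(E_x(z)-E_0^{(x)})\to V(z)>0$.

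A few minor points where you differ from the paper. First, you invoke Cauchy--Schwarz explicitly to reduce to the diagonal; the paper bounds the product $|\overline{q_{m,N}}q_{n,N}|$ directly, which amounts to the same thing. Second, you claim a full Laplace asymptotic $\frac1N\log h_{n,N}=-xE_0^{(x)}+\ord(1)$, whereas the paper only uses (and only proves) the lower bound $h_{m,N}\ge \e^{-mE_x(0)+\ord(m)}$, obtained simply by restricting the integration to $D\setminus D_{x,+}$; since only an \emph{upper} bound on $|q_{n,N}|^2$ is needed, the lower bound on $h_{n,N}$ suffices and is cheaper. Third, the uniformity obstacle you flag is real, and the paper handles it exactly as you propose: continuity of $x\mapsto x(E_x(z)-E_x(0))$ on compact pieces of $(0,1]$ together with the explicit small-$x$ limit $V(z)$, which is bounded away from zero on $\partial D$ by hypothesis.
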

\begin{proof}
According to Lemma \ref{l:asymptoticOgPol} and the proof of Theorem \ref{t:equilibrium measure of a polynomial curve}, the norms $(h_{m,N})_{m=0}^\infty$ of the orthogonal polynomials $(p_{m,N})_{m=0}^\infty$ for the potential $V$ fulfil in the limit $N\to\infty$, $\frac mN\to x\in(0,1]$:
\[ h_{m,N}\ge\int_{D\setminus D_{x,+}}|p_{m,N}(z)|^2\e^{-NV(z)}\dd z=\int_{D\setminus D_{x,+}}\e^{-mE_x(z)+\ord(m)}\dd z=\e^{-mE_x(0)+\ord(m)}, \]
where $D_{x,+}$ denotes the support of the equilibrium measure $\mu_x$ for the potential $\frac1xV$ and
\[ E_x(z) = \frac1xV(z)+2\int\log|z-\zeta|\I\d\mu_x(\zeta). \]

For the orthonormal polynomials $q_{n,N}=\frac1{\sqrt{h_{n,N}}}p_{n,N}$, $n\in\N_0$, we therefore get in the limit $N\to\infty$, $\frac mN\to x\in(0,1]$, $\frac nN\to y\in(0,1]$
\begin{align*}
\lim_{N\to\infty}\frac1N\log\left|\oint_{\partial D}\overline{q_m(z)}q_n(z)\e^{-NV(z)}\d z\right|\le-\frac12\min_{z\in\partial D}(x(E_x(z)-E_x(0))+y(E_y(z)-E_y(0))), 
\end{align*}
which by the proof of Theorem \ref{t:equilibrium measure of a polynomial curve} is for all $x,y\in(0,1]$ strictly negative. Moreover, since
\[ \lim_{x\to0}x(E_x(z)-E_x(0))=V(z)+\frac2{\pi t_0}\lim_{x\to0}\int_{D_{x,+}}\log\left|1-\frac z\zeta\right|\I\dd\zeta = V(z)>0 \]
for all $z\in\partial D$, there exists an $\varepsilon>0$ such that relation \eqref{e:borderEstimate} holds.
\end{proof}

\begin{proposition}\label{p:recursionRelation}
We have the operator identity
\begin{equation}\label{e:recursionRelation}
L^*_N = \frac{t_0}NA_N+\sum_{k=1}^{d+1}kt_kL_N^{k-1}+C_N(D),
\end{equation}
where the matrix elements $C_{m,n,N}(D)$ of the operator $C_N(D)$ tend in the limit $N\to\infty$ uniformly in $m,n\le N$ to zero faster than $\e^{-N\varepsilon}$ for some $\varepsilon>0$, depending only on the potential $V$ and the cut-off $D$.
\end{proposition}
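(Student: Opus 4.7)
\emph{Proof plan.} The strategy is to verify the operator identity matrix element by matrix element in the orthonormal basis $(q_{n,N})_{n\in\N_0}$, and to reduce it to a single integration by parts combined with the exponential boundary estimate of Lemma \ref{l:borderEstimate}. By definition,
\[ (L_N^*)_{m,n}=\overline{(L_N)_{n,m}}=\int_D \bar z\,\overline{q_{m,N}(z)}q_{n,N}(z)\e^{-NV(z)}\dd z, \]
while the right-hand side of \eqref{e:recursionRelation} contributes $\frac{t_0}N\int_D\overline{q_{m,N}}\partial_zq_{n,N}\e^{-NV}\dd z+\sum_{k=1}^{d+1}kt_k\int_D\overline{q_{m,N}}z^{k-1}q_{n,N}\e^{-NV}\dd z$. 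The crucial algebraic observation is that the specific polynomial potential $V(z)=\frac1{t_0}(|z|^2-2\RE\sum_{k=1}^{d+1}t_kz^k)$ satisfies
\[ \partial_zV(z)=\frac1{t_0}\Bigl(\bar z-\sum_{k=1}^{d+1}kt_kz^{k-1}\Bigr), \]
so the difference between the two sides at the $(m,n)$ entry collapses to the single integral $t_0\int_D\partial_zV\cdot\overline{q_{m,N}(z)}q_{n,N}(z)\e^{-NV(z)}\dd z$.

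The second step is a Gaussian-type integration by parts. Writing $\partial_zV\cdot\e^{-NV}=-\frac1N\partial_z(\e^{-NV})$ and applying the Stokes identity $\int_D\partial_zf\,\dd z=-\frac1{2\i}\oint_{\partial D}f\,\d\bar z$ with $f=\overline{q_{m,N}}q_{n,N}\e^{-NV}$, I transfer the derivative onto $\overline{q_{m,N}}q_{n,N}$. Since $\overline{q_{m,N}(z)}$ is antiholomorphic in $z$, I have $\partial_z(\overline{q_{m,N}}q_{n,N})=\overline{q_{m,N}}\,\partial_zq_{n,N}$, and the bulk term reproduces exactly $\frac{t_0}{N}(A_N)_{m,n}$. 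What remains is the boundary contribution
\[ C_{m,n,N}(D)=\frac{t_0}{2\i N}\oint_{\partial D}\overline{q_{m,N}(z)}q_{n,N}(z)\e^{-NV(z)}\d\bar z, \]
which is exactly the error term appearing in \eqref{e:recursionRelation}.

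The final step is to bound this boundary integral. Since $\oint_{\partial D}f\,\d\bar z=\overline{\oint_{\partial D}\bar f\,\d z}$ on a closed real curve, the absolute value equals $|\oint_{\partial D}\overline{q_{n,N}}q_{m,N}\e^{-NV}\d z|$, which by Lemma \ref{l:borderEstimate} (applied with $m$ and $n$ interchanged) is $\ord(\e^{-N\varepsilon})$ uniformly in $m,n\le N$ for some $\varepsilon>0$ depending only on $V$ and $D$; the prefactor $\frac{t_0}{2\i N}$ does not affect the exponential rate. The only genuine obstacle in the argument is ensuring the uniformity of the exponential decay over all $m,n\le N$, and that obstacle is already absorbed into Lemma \ref{l:borderEstimate}. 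Given that lemma, the proposition is essentially a one-line integration by parts, driven entirely by the structural identity $\bar z-\sum kt_kz^{k-1}=t_0\partial_zV$ specific to the polynomial potentials under consideration.
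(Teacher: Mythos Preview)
Your proof is correct and follows the same approach as the paper: write $\bar z=t_0\partial_zV+\sum_k kt_kz^{k-1}$, integrate by parts in the weighted inner product, and invoke Lemma \ref{l:borderEstimate} for the boundary remainder. The only cosmetic difference is that the paper differentiates in $\bar z$ and works with $(q_{m,N},zq_{n,N})_N=(L_N)_{m,n}$ (so its boundary term is $\frac1{2\i}\oint_{\partial D}\overline{q_{m,N}}q_{n,N}\e^{-NV}\d z$, exactly the integral in Lemma \ref{l:borderEstimate}), while you differentiate in $z$ and compute $(L_N^*)_{m,n}$ directly, obtaining the complex-conjugate boundary integral in $\d\bar z$; your conjugation step to reduce to the lemma is correct and the two arguments are equivalent.
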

\begin{proof}
Writing the multiplication by $z$ as differential operator acting on $\e^{-NV(z)}$ and integrating by parts, we get
\begin{align*}
(q_{m,N},zq_{n,N})_N&=\frac{t_0}N(\partial_zq_{m,N},q_{n,N})_N+\sum_{k=1}^{d+1}k\bar t_k(z^{k-1}q_{m,N},q_{n,N})_N \\
&\qquad+\frac1{2\i}\oint_{\partial D}\overline{q_{m,N}(z)}q_{n,N}(z)\e^{-NV(z)}\d z,
\end{align*}
where the last term is by Lemma \ref{l:borderEstimate} for $m,n\le N$ of the form $\ord(\e^{-N\varepsilon})$ for some $\varepsilon>0$.
\end{proof}

Let us now introduce the projection $\Pi_k$ from the operators on $\mathcal H$ onto the $(k+1)\times(k+1)$ matrices defined by
\[ \Pi_k(O) = (O_{m,n})_{m,n=0}^k\quad\textrm{for any operator}\;O. \]

\begin{corollary}
The operators $L_N$ and $L^*_N$ fulfil in the case $d\ge1$
\begin{equation}\label{e:stringEq}
\Pi_{N-d}([\Pi_N(L_N^*),\Pi_N(L_N)])=\frac{t_0}N+\tilde C_{N-d}(D), 
\end{equation}
where the matrix elements $\tilde C_{m,n,N}(D)$ of the operator $\tilde C_N(D)$ tend in the limit $N\to\infty$ uniformly in $m,n\le N$ faster to zero than $\e^{-N\varepsilon}$ for some $\varepsilon>0$, depending only on the potential $V$ and the cut-off $D$.
\end{corollary}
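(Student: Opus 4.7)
The starting point is that $L_N$ and $L_N^*$ are bounded multiplication operators on $\mathcal H$ by $z$ respectively $\bar z$, hence $L_N^*L_N=L_NL_N^*$ as operators. All of the right-hand side of \eqref{e:stringEq} will therefore have to be produced by truncation errors. Since $L_N$ is upper Hessenberg by \eqref{e:matrixElementsSupp} ($L_{j,n,N}=0$ for $j>n+1$), for $m,n\le N-d$ the sum $\sum_jL^*_{m,j,N}L_{j,n,N}$ is supported on $j\le n+1\le N$, so $\Pi_N(L_N^*)\Pi_N(L_N)$ reproduces $L_N^*L_N$ exactly on indices $m,n\le N-d$. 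Combined with $L_N^*L_N=L_NL_N^*$ this reduces the truncated commutator to a tail:
\[ [\Pi_N(L_N^*),\Pi_N(L_N)]_{m,n}=\sum_{j=N+1}^\infty L_{m,j,N}\,L^*_{j,n,N},\qquad m,n\le N-d. \]

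Next I use the recursion \eqref{e:recursionRelation} to identify the tail entries. For $j>N$ and $n\le N-d$ we have $A_{j,n,N}=0$ since $A_N$ is strictly upper triangular by \eqref{e:matrixElementsSupp}, and each $(L_N^{k-1})_{j,n}$ with $k\le d+1$ vanishes, because the upper Hessenberg property forces $(L_N^{k-1})_{j,n}=0$ as soon as $j>n+k-1$, a condition guaranteed by $j\ge N+1>(N-d)+d\ge n+(k-1)$. Consequently $L^*_{j,n,N}=C_{j,n,N}(D)$ in this range, and the tail becomes $\sum_{j>N}L_{m,j,N}\,C_{j,n,N}(D)$.

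The final step is to evaluate this tail through a global commutator identity. Commuting \eqref{e:recursionRelation} with $L_N$, the polynomial part $\sum_kkt_kL_N^{k-1}$ drops out, and together with $[A_N,L_N]=\id$ (immediate from $A_Nq_{n,N}(z)=\partial_zq_{n,N}(z)$ and $L_Nq_{n,N}(z)=zq_{n,N}(z)$), the commutativity $[L_N^*,L_N]=0$ forces the operator identity $[L_N,C_N(D)]=\frac{t_0}N\id$. Taking the $(m,n)$ entry for $m,n\le N-d$,
\[ \sum_jL_{m,j,N}\,C_{j,n,N}(D)-\sum_jC_{m,j,N}(D)\,L_{j,n,N}=\frac{t_0}N\delta_{m,n}. \]
In the second sum $L_{j,n,N}$ is supported on $j\le n+1\le N$, so Proposition \ref{p:recursionRelation} together with the boundedness $|L_{m,n,N}|\le\max_{z\in D}|z|$ makes it $\ord(\e^{-N\varepsilon'})$ for any $\varepsilon'<\varepsilon$, and by the same argument the bulk part $\sum_{j\le N}L_{m,j,N}C_{j,n,N}(D)$ of the first sum is $\ord(\e^{-N\varepsilon'})$. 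Therefore $\sum_{j>N}L_{m,j,N}\,C_{j,n,N}(D)=\frac{t_0}N\delta_{m,n}+\ord(\e^{-N\varepsilon'})$, which is \eqref{e:stringEq}.

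The essential obstacle---and the reason a naive termwise bound fails---is that Proposition \ref{p:recursionRelation} controls $C_{j,n,N}(D)$ only in the window $j,n\le N$, whereas the required tail lives in the uncontrolled range $j>N$. One cannot estimate the tail entry by entry; it has to be evaluated globally via the operator identity $[L_N,C_N(D)]=\frac{t_0}N\id$, where the genuinely small bulk part $\sum_{j\le N}$ leaves the leading contribution $\frac{t_0}N\delta_{m,n}$ automatically concentrated in the tail.
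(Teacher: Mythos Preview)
Your argument is correct, but it takes a noticeably longer route than the paper's. The paper never leaves the window $j\le N$: it substitutes the recursion \eqref{e:recursionRelation} for $L_N^*$ \emph{inside} the truncated commutator and then uses two projection identities,
\[
\Pi_{N-k}\bigl(\Pi_N(O)\Pi_N(L_N^k)\bigr)=\Pi_{N-k}(OL_N^k),\qquad \Pi_N(O)\Pi_N(A_N)=\Pi_N(OA_N),
\]
both consequences of the Hessenberg/triangular structure \eqref{e:matrixElementsSupp}. These let the truncations on the $A_N$- and $L_N^{k-1}$-pieces drop out, leaving $\frac{t_0}{N}\Pi_{N-d}([A_N,L_N])=\frac{t_0}{N}$ plus the remainder $\tilde C_{N-d}(D)=\Pi_{N-d}([\Pi_N(C_N(D)),\Pi_N(L_N)])$, which by construction involves only $C_{m,n,N}(D)$ with $m,n\le N$ and is hence directly controlled by Proposition \ref{p:recursionRelation}.

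Your approach instead first invokes the operator commutativity $[L_N^*,L_N]=0$ to reduce the truncated commutator to a tail $\sum_{j>N}L_{m,j,N}L^*_{j,n,N}$, identifies $L^*_{j,n,N}=C_{j,n,N}(D)$ there, and then---because the tail entries of $C_N(D)$ are not covered by Proposition \ref{p:recursionRelation}---uses the global identity $[L_N,C_N(D)]=\frac{t_0}{N}$ to trade the uncontrolled tail for controlled bulk sums. This is sound, but the ``essential obstacle'' you flag in your last paragraph is self-inflicted: the paper's substitution-first strategy simply never produces matrix elements with indices beyond $N$, so no global identity is needed to re-enter the controlled range. What your route buys is a conceptually clean separation (commutativity $\Rightarrow$ pure truncation error $\Rightarrow$ evaluation via an exact identity), at the cost of an extra detour; the paper's route is shorter and keeps the error term $\tilde C_{N-d}(D)$ in a form where its smallness is immediate.
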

\begin{proof}
Because of the relations \eqref{e:matrixElementsSupp}, we have for any operator $O$ and any $k\in\N$ the identities
\[ \Pi_{N-k}(\Pi_N(O)\Pi_N(L_N^k)) = \Pi_{N-k}(OL_N^k)\quad\textrm{and}\quad\Pi_N(O)\Pi_N(A) = \Pi_N(OA). \]
Substituting now the relation \eqref{e:recursionRelation} for $L_N^*$, we get
\begin{align*}
\Pi_{N-d}([\Pi_N(L_N^*),\Pi_N(L_N)])&=\frac{t_0}N\Pi_{N-d}([A_N,L_N])+\sum_{k=1}^{d+1}kt_k\Pi_{N-d}([L_N^{k-1},L_N]) \\
&\qquad\;\,+\Pi_{N-d}([\Pi_N(C_N(D)),\Pi_N(L_N)])\\
&= \frac{t_0}N+\Pi_{N-d}([\Pi_N(C_N(D)),\Pi_N(L_N)]).
\end{align*}
With $\tilde C_{N-d}(D)=\Pi_{N-d}([\Pi_N(C_N(D)),\Pi_N(L_N)])$, this gives equation \eqref{e:stringEq}.
\end{proof}

\begin{proposition}
For $1\le k\le d+1$, the operators $L_N$ and $L_N^*$ fulfil the equations
\begin{align}
&\frac{\partial L_N}{\partial t_k}=\frac N{t_0}[L_N,M^{(k)}_N],&&\frac{\partial L_N^*}{\partial t_k}=\frac N{t_0}[L_N^*,M^{(k)}_N],\label{e:todaLax}\\
&\frac{\partial L_N}{\partial\bar t_k}=\frac N{t_0}[M^{(k)*}_N,L_N],&&\frac{\partial L_N^*}{\partial\bar t_k}=\frac N{t_0}[M^{(k)*}_N,L_N^*],\label{e:todaLax2}
\end{align}
where, with the notation $O_+$ for the lower triangular, $O_-$ for the upper triangular, and $O_0$ for the diagonal part of the operator $O$,
\begin{equation}\label{e:todaHamiltonianOperator}
M^{(k)}_N=(L^k_N)_++\frac12(L^k_N)_0,\quad M^{(k)*}_N=((L^*_N)^k)_-+\frac12((L^*_N)^k)_0.
\end{equation}
\end{proposition}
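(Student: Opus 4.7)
The plan is to introduce two auxiliary operators $B^{(k)}_N$ and $\tilde B^{(k)}_N$ that encode how the orthonormal basis $(q_{n,N})$ depends on $t_k$ and $\bar t_k$, derive Lax equations for $L_N$ from them, and then pass to $L^*_N$ by taking matrix adjoints. Concretely, define
\[ \frac{\partial q_{n,N}}{\partial t_k}=\sum_m B^{(k)}_{m,n,N}\,q_{m,N},\qquad \frac{\partial q_{n,N}}{\partial\bar t_k}=\sum_m\tilde B^{(k)}_{m,n,N}\,q_{m,N}. \]
Since both derivatives remain polynomials of degree at most $n$, we have $(B^{(k)}_N)_+=(\tilde B^{(k)}_N)_+=0$. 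Differentiating $zq_{n,N}=\sum_m(L_N)_{m,n}q_{m,N}$ in $t_k$ and $\bar t_k$ and comparing coefficients of each $q_{p,N}$ --- multiplication by $z$ is independent of the parameters --- yields the operator relations
\[ \frac{\partial L_N}{\partial t_k}=[L_N,B^{(k)}_N],\qquad \frac{\partial L_N}{\partial\bar t_k}=[L_N,\tilde B^{(k)}_N]. \]

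The next step is to determine $B^{(k)}_N$ and $\tilde B^{(k)}_N$ explicitly. Differentiating the orthonormality $(q_{m,N},q_{n,N})_N=\delta_{m,n}$ in $t_k$, with $\partial_{t_k}V=-z^k/t_0$ and the Wirtinger identity $\partial_{t_k}\overline{q_{m,N}}=\overline{\partial_{\bar t_k}q_{m,N}}$, yields the coupled relation $(\tilde B^{(k)}_N)^*+B^{(k)}_N=-\frac N{t_0}L^k_N$; its strict upper part gives $(B^{(k)}_N)_-=-\frac N{t_0}(L^k_N)_-$, and the analogous $\partial_{\bar t_k}$-differentiation gives $(\tilde B^{(k)}_N)_-=-\frac N{t_0}((L^*_N)^k)_-$. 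For the diagonals, the orthonormality only couples the two, so I would use monicity of $p_{n,N}$: since $\partial_{t_k}p_{n,N}$ and $\partial_{\bar t_k}p_{n,N}$ both have degree $<n$, orthogonality kills the cross terms in $\partial_{t_k}h_{n,N}$, leaving $\partial_{t_k}h_{n,N}=\frac N{t_0}(p_{n,N},z^kp_{n,N})_N=\frac N{t_0}h_{n,N}(L^k_N)_{n,n}$, whence $B^{(k)}_{n,n,N}=-\frac N{2t_0}(L^k_N)_{n,n}$, and likewise $\tilde B^{(k)}_{n,n,N}=-\frac N{2t_0}((L^*_N)^k)_{n,n}$. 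Assembling the three pieces yields
\[ B^{(k)}_N=\tfrac N{t_0}(M^{(k)}_N-L^k_N),\qquad \tilde B^{(k)}_N=-\tfrac N{t_0}M^{(k)*}_N, \]
and substituting back, using $[L_N,L^k_N]=0$, gives the two $L_N$-equations in \eqref{e:todaLax} and \eqref{e:todaLax2}.

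For $L^*_N$, I take matrix adjoints. The Wirtinger identities give $(\partial_{t_k}L_N)^*=\partial_{\bar t_k}L^*_N$ and $(\partial_{\bar t_k}L_N)^*=\partial_{t_k}L^*_N$, while a short check of the $(\cdot)_\pm,(\cdot)_0$ decomposition together with $(L^k_N)^*=(L^*_N)^k$ shows $(M^{(k)*}_N)^*=M^{(k)}_N$; adjoining the two already-derived $L_N$-equations then produces the remaining two. I expect the most delicate step to be pinning down the diagonals of $B^{(k)}_N$ and $\tilde B^{(k)}_N$ --- the orthonormality alone does not determine them, only couples them --- which requires the monic normalisation of $p_{n,N}$ as external input; the Wirtinger bookkeeping, combined with the paper's non-standard convention that $+$ denotes \emph{lower} and $-$ denotes \emph{upper} triangular, is the main source of sign errors but is otherwise routine.
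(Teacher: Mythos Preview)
Your proof is correct and follows essentially the same route as the paper. Your operator $B^{(k)}_N$ is precisely the paper's $A^{(k)}_N$, and the paper likewise obtains $\partial_{t_k}L_N=[L_N,A^{(k)}_N]$, identifies the upper-triangular and diagonal parts of $A^{(k)}_N$ via the differentiated orthonormality relation together with the monic normalisation (the paper phrases the diagonal step as $(q_{n,N},\partial_{t_k}q_{n,N})_N=\sqrt{h_{n,N}}\,\partial_{t_k}h_{n,N}^{-1/2}=(\partial_{\bar t_k}q_{n,N},q_{n,N})_N$, then halves the coupled relation, which is equivalent to your direct computation of $\partial_{t_k}h_{n,N}$), and finally derives the $L_N^*$ equations by taking adjoints exactly as you do.
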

\begin{proof}
For all $n\in\N_0$, we have
\[ \sum_{\ell,m=0}^\infty A^{(k)}_{m,n,N}L_{\ell,m,N}q_{\ell,N}=\frac{\partial}{\partial t_k}(L_Nq_{n,N})=\sum_{\ell=0}^\infty\frac{\partial L_{\ell,n,N}}{\partial t_k}q_{\ell,N}+\sum_{\ell,m=0}^\infty L_{m,n,N}A^{(k)}_{\ell,m,N}q_{\ell,N}, \]
and so $\frac{\partial L_N}{\partial t_k}=[L_N,A^{(k)}_N]$.
Now, because of
\[ A^{(k)}_{m,n,N}=\left(q_{m,N},\frac{\partial q_{n,N}}{\partial t_k}\right)_N=-\left(\frac{\partial q_{m,N}}{\partial\bar t_k},q_{n,N}\right)_N-\frac N{t_0}(q_{m,N},z^kq_{n,N})_N \]
for all $m,n\in\N_0$, we have $(A^{(k)}_N)_-=-\frac N{t_0}(L_N^k)_-$, and introducing the orthogonal polynomials $p_{n,N}=\sqrt{h_{n,N}}q_{n,N}$, $n\in\N_0$, for $V$, we find
\[ \left(q_{n,N},\frac{\partial q_{n,N}}{\partial t_k}\right)_N = \sqrt{h_{n,N}}\frac{\partial}{\partial t_k}\frac1{\sqrt{h_{n,N}}} = \left(\frac{\partial q_{n,N}}{\partial\bar t_k},q_{n,N}\right)_N\quad\textrm{for all $n\in\N_0$,} \]
which implies $(A^{(k)}_N)_0=-\frac N{2t_0}(L_N^k)_0$. So, by setting $M^{(k)}_N=L_N^k+\frac{t_0}NA^{(k)}_N$, we get the relations \eqref{e:todaHamiltonianOperator} and fulfil the first equation of \eqref{e:todaLax}. Remarking that
\[ M^{(k)*}_{m,n,N} = -\frac{t_0}N\left(q_{m,N},\frac{\partial q_{n,N}}{\partial\bar t_k}\right)_N\quad\textrm{for all $m,n\in\N_0$,} \]
we see that also the first equation of \eqref{e:todaLax2} and therefore, by taking the adjoint equations, all four equations of \eqref{e:todaLax} and \eqref{e:todaLax2} are fulfilled.
%
\end{proof}

Again, as in the case of the polynomial curves, only the equations for the times $(t_k)_{k=1}^{d+1}$ need to be considered. Moreover, equation \eqref{e:stringEq} provides us with a dispersionful analogon of the string equation.

\subsection{Application}\label{s:application}
Let us for some $d\in\N$ consider the potential
\[ V(z)=\frac1{t_0}(|z|^2-t_{d+1}(z^{d+1}+\bar z^{d+1})),\quad t_{d+1}>0, \]
on a cut-off $D$ chosen as disc centered in the origin such that $V(z)>0$ for all $z\in D\setminus\{0\}$. Additionally, for $d=1$, we require $t_2<\frac12$.

Using the weighted homogeneity of the universal polynomials $P_{j,k}$ in equation \eqref{e:harmonicMoments}, we find for small $t_0>0$ the parametrisation $h$ of a polynomial curve with only non-vanishing harmonic moment $t_{d+1}$ and encircled area $\pi t_0$:
\begin{equation}\label{e:correspondingPolCurve}
h(w)=rw+aw^{-d},\quad a=(d+1)t_{d+1}r^d,
\end{equation}
where $r$ is the smallest positive solution (which does exist for small $t_0$) of the algebraic equation
\begin{equation}\label{e:correspondingPolCurve1}
t_0=r^2-d(d+1)^2t_{d+1}^2r^{2d}.
\end{equation}
\begin{proposition}\label{p:recursionOnPol}
There exists an $\varepsilon>0$ such that the orthonormal polynomials $(q_{n,N})_{n=0}^N$ for the potential $V$ fulfil (with $q_{-n,N}=0$ for $n\in\N$) for $0\le n\le N-1$ the recursion formula
\begin{equation}\label{e:recursionOnPol}
zq_{n,N}=r_{n+1,N}q_{n+1,N}+a_{n-d,N}q_{n-d,N}+\sum_{j\ge2}\ord(\e^{-N\varepsilon})q_{n+1-j(d+1),N},
\end{equation}
where
\begin{equation}\label{e:recursionCoefficient}
\bar a_{n,N}=(d+1)t_{d+1}\prod_{k=1}^dr_{n+k,N}+\ord(\e^{-N\varepsilon}),\quad 0\le n\le N-d,
\end{equation}
and (setting $r_{-n,N}=0$ for $n\in\N_0$) the $r_{n,N}$ are recursively defined by 
\begin{equation}\label{e:recursionRelationCoefficients}
|a_{n,N}|^2=r_{n+1,N}^2-\sum_{k=1}^{d-1}|a_{n-d+k}|^2-\frac{t_0}N(n+1)+\ord(\e^{-\frac12N\varepsilon}),\quad 0\le n\le N-d.
\end{equation}
\end{proposition}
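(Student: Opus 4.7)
The plan is to combine the $\Z_{d+1}$-symmetry of the potential and the disc $D$ with Proposition \ref{p:recursionRelation} to force all but two matrix elements of each row and column of $L_N$ to be exponentially small, and then to identify the surviving entries as $r_{n,N}$ and $a_{n,N}$.

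First, I would establish a selection rule. Since $V(\omega z)=V(z)$ for $\omega=\e^{2\pi\i/(d+1)}$ and the cut-off $D$ is a rotation-invariant disc, uniqueness of the monic orthogonal polynomials forces $p_{n,N}(\omega z)=\omega^n p_{n,N}(z)$. Substituting $z\mapsto\omega z$ in $L_{m,n,N}=(q_{m,N},zq_{n,N})_N$ then gives $L_{m,n,N}=\omega^{n-m+1}L_{m,n,N}$, so $L_{m,n,N}=0$ unless $m\equiv n+1\pmod{d+1}$. Next I would locate the non-negligible band. Degree considerations give $L_{m,n,N}=0$ for $m>n+1$. For the opposite direction, taking the $(m,n)$-entry of Proposition \ref{p:recursionRelation}, which for this potential reads $L_N^*=\frac{t_0}NA_N+(d+1)t_{d+1}L_N^d+C_N(D)$, and using $A_{m,n,N}=0$ for $m\geq n$ together with $(L_N^d)_{m,n,N}=0$ for $m>n+d$ (since $L_N^dq_{n,N}=z^dq_{n,N}$ has degree $n+d$), I obtain $\overline{L_{n,m,N}}=L^*_{m,n,N}=\ord(\e^{-N\varepsilon})$ for $m>n+d$, i.e.\ $L_{m,n,N}=\ord(\e^{-N\varepsilon})$ for $m<n-d$. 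Combined with the selection rule, only $m=n+1$ and $m=n-d$ survive as non-negligible, yielding \eqref{e:recursionOnPol} upon setting $r_{n+1,N}:=L_{n+1,n,N}$ and $a_{n-d,N}:=L_{n-d,n,N}$.

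For \eqref{e:recursionCoefficient}, I would read off the $(n+d,n)$-entry of the same identity. Here $A_{n+d,n,N}=0$ since $\partial_zq_{n,N}$ has degree $n-1$, and $(L_N^d)_{n+d,n,N}$ is the coefficient of $q_{n+d,N}$ in $z^dq_{n,N}$. Iterating \eqref{e:recursionOnPol} at each step picks out $\prod_{k=1}^dr_{n+k,N}$ exactly for this coefficient (lower-index contributions cannot reach $q_{n+d,N}$), and taking complex conjugates delivers \eqref{e:recursionCoefficient}.

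For \eqref{e:recursionRelationCoefficients}, I would feed the band structure into the string equation \eqref{e:stringEq}. Its $(n,n)$-entry is $\sum_m(|L_{m,n,N}|^2-|L_{n,m,N}|^2)$, which by the band structure equals $r_{n+1,N}^2+|a_{n-d,N}|^2-r_{n,N}^2-|a_{n,N}|^2$ modulo $\ord(\e^{-N\varepsilon})$ uniformly for $0\leq n\leq N-d$. Equating this with $\frac{t_0}N+\ord(\e^{-N\varepsilon})$ and summing from $n=0$ to $n=M$, under the conventions $r_{0,N}=0$ and $a_{-j,N}=0$ for $j>0$, the $r$-terms telescope to $r_{M+1,N}^2$ while the $a$-differences collapse to $-|a_{M,N}|^2-\sum_{k=1}^{d-1}|a_{M-d+k,N}|^2$; this gives \eqref{e:recursionRelationCoefficients} after absorbing the cumulative error of $O(N)$ terms into the weakened bound $\ord(\e^{-N\varepsilon/2})$.

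The main technical point to verify is that the uniformity in $m,n\leq N$ of the error estimates in Proposition \ref{p:recursionRelation} and its corollary is strong enough to survive both the length-$d$ iteration in step three and the length-$N$ summation in step four; the latter is precisely what forces the $\varepsilon\mapsto\varepsilon/2$ weakening recorded in the statement. Everything else reduces to linear algebra.
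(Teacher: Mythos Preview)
Your proposal is correct and follows essentially the same approach as the paper's proof: the paper likewise combines the $\Z_{d+1}$-symmetry of $V$ (yielding the selection rule on the $L_{m,n,N}$) with Proposition~\ref{p:recursionRelation} (yielding the band restriction $-1\le n-m\le d$ up to exponentially small terms), reads off \eqref{e:recursionCoefficient} from the $(n+d,n)$ entry of the same identity, and obtains \eqref{e:recursionRelationCoefficients} by summing the diagonal of the string equation \eqref{e:stringEq}. Your write-up is somewhat more explicit than the paper's---for instance in spelling out the $(n,n)$-entry of $[L_N^*,L_N]$ and in noting that the length-$N$ summation is what degrades $\varepsilon$ to $\varepsilon/2$---but there is no substantive difference in method.
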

\begin{proof}
Proposition \ref{p:recursionRelation} together with the restriction \eqref{e:matrixElementsSupp} shows us that for $m,n\le N$ only the matrix elements $L_{m,n,N}$ with $-1\le n-m\le d$ are not zero or at least exponentially decreasing for $N\to\infty$. Moreover, the symmetry $z\mapsto\e^{\frac{2\pi\i}{d+1}}z$ of the potential $V$ implies that the orthonormal polynomials have the form 
\[ q_{j(d+1)+r,N}(z)=z^r\tilde q_{j(d+1)+r,N}(z^{d+1}),\quad 0\le r\le d,\quad j\in\N_0, \]
for some polynomials $\tilde q_{j(d+1)+r,N}$ each with degree $j$. Therefore, the only non-vanishing matrix elements are $L_{n+1-j(d+1),n,N}$, $j\in\N_0$.

Setting now $r_{n+1,N}=L_{n+1,n,N}$ and $a_{n,N}=L_{n,n+d,N}$, the matrix element $(n+d,n)$ of the equality \eqref{e:recursionRelation} gives us relation \eqref{e:recursionCoefficient}.

The diagonal part of equation \eqref{e:stringEq} now reads
\[ r_{m+1,N}^2-r_{m,N}^2=\frac{t_0}N+|a_{m,N}|^2-|a_{m-d,N}|^2+\ord(\e^{-N\varepsilon}),\quad 0\le m\le N-d. \]
Adding up the first $n+1$ of these equations gives us \eqref{e:recursionRelationCoefficients}.
\end{proof}
In the special case $d=1$, we may set $D=\C$ so that the error terms disappear. Then $a_{n,N}=2t_2r_{n+1,N}$ and the recursion relation \eqref{e:recursionRelationCoefficients} simplifies to
\[ (1-4t_2^2)(r_{n+1,N}^2-r_{n,N}^2)=\frac{t_0}N,\quad n\in\N_0, \]
which has the explicit solution $r_{n,N}=\sqrt{\frac{t_0n}{N(1-4t_2^2)}}$, $n\in\N$.

\begin{lemma}\label{l:zeros}
Let $(q_n)_{n\in\N_0}$ be real polynomials with $\deg q_n=n$, $q_0>0$ and
\begin{equation}\label{e:generalRecursionRelation}
zq_n(z)=r_{n+1}q_{n+1}(z)+a_{n-d}q_{n-d}(z),
\end{equation}
for some $d\in\N$ and some positive numbers $r_n$ and $a_n$, $n\in\N_0$. Then there uniquely exist real polynomials $\tilde q_{j(d+1)+r}$ of degree $j$ with 
\[ q_{j(d+1)+r}(z)=z^r\tilde q_{j(d+1)+r}(z^{d+1}),\quad 0\le r\le d,\quad j\in\N_0, \]
and these $\tilde q_{j(d+1)+r}$ have exactly $j$ different, real, positive zeros.
\end{lemma}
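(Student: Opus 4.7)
The plan has two stages: establishing the factored form, then the zero structure.

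For the factored form, I would induct on $n$, writing $n = j(d+1) + r$ with $0 \le r \le d$. Solving the recursion as $r_{n+1} q_{n+1} = z q_n - a_{n-d} q_{n-d}$ and substituting the inductively assumed factored forms of $q_n$ and $q_{n-d}$ splits into two cases. When $r < d$, both sides acquire a common factor $z^{r+1}$ which drops out, yielding $\tilde q_{n+1} = (\tilde q_n - a_{n-d}\,\tilde q_{n-d})/r_{n+1}$, a polynomial of the same degree $j$ as $\tilde q_n$. When $r = d$, the substitution $w = z^{d+1}$ gives $w\,\tilde q_n(w) = r_{n+1}\tilde q_{n+1}(w) + a_{n-d}\tilde q_{n-d}(w)$, so $\tilde q_{n+1}$ has degree $j+1$. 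Uniqueness and the positivity of the leading coefficient propagate along the recursion.

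For the zero structure, the claim is that each $\tilde q_{j(d+1)+r}$ has exactly $j$ distinct positive real zeros. The strategy is an induction on $j$ strengthened by an interlacing statement across the residue classes $r = 0,\dots,d$ at heights $j$ and $j-1$. The base cases $j = 0$ and $j = 1$ are direct: in the latter case the reduced recursion produces linear polynomials $\tilde q_{d+1+r}$ whose single zeros are explicit positive combinations of the $a$'s and $r$'s. For the inductive step, the two reduced recursions above permit evaluating $\tilde q_{n+1}$ at the zeros of $\tilde q_n$ (or of $\tilde q_{n-d}$), where one of the two right-hand side terms vanishes and the sign of $\tilde q_{n+1}$ there is read off the surviving term. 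Combined with the inductive interlacing hypothesis and the positivity of all $r_n$ and $a_n$, the resulting sign sequence alternates along the ordered zeros, producing enough sign changes in $(0,\infty)$ to locate all $\deg\tilde q_{n+1}$ zeros there; simplicity follows from the degree count.

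The delicate point is choosing the correct interlacing statement to carry through the induction. For $d = 1$ the $\tilde q$'s in a single residue class satisfy a three-term recursion in $w = z^{d+1}$ with positive entries, and the classical Favard/Sturm-sequence interlacing applies directly. For $d \ge 2$ the recursion inside a single residue class has $d+2$ terms, and one has to track interlacing information simultaneously across all $d+1$ residue classes, using both variants of the reduced recursion to transfer it from level $j-1$ to level $j$. A cleaner but heavier alternative is to realise the zeros of $\tilde q_{j(d+1)+r}$ as eigenvalues of the $j\times j$ principal truncation of the matrix representing multiplication by $w$ in the basis $\{\tilde q_{k(d+1)+r}\}_{k\ge 0}$, and to invoke oscillation-matrix theory of Gantmacher--Krein type to force real, simple, positive eigenvalues; this reduces the task to a positivity statement about explicit polynomial expressions in the $r_n$ and $a_n$.
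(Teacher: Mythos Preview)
Your approach is essentially the paper's: derive the reduced recursions for the $\tilde q_n$ (split into the two cases $r<d$ and $r=d$ exactly as you do), then prove the zero structure by an interlacing induction across all residue classes simultaneously, reading off sign alternations by evaluating one recursion term at zeros of the other. The paper commits to the specific interlacing hypothesis
\[
\xi_{k,n}<\xi_{k,n+1}<\xi_{k+1,j(d+1)}\quad\text{for }j(d+1)\le n<j(d+1)+d,\ 1\le k\le j,
\]
which is precisely the ``correct interlacing statement'' you flag as the delicate point but leave unspecified; with this hypothesis the sign-change count goes through mechanically. Your Gantmacher--Krein alternative is not used in the paper and would be a genuinely different (and heavier) route.
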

\begin{proof}
Rewriting the recursion relation \eqref{e:generalRecursionRelation} in terms of the polynomials $\tilde q_n$, we get
\begin{align*}
&r_{n+1}\tilde q_{n+1}(z)=\phantom{z}\tilde q_n(z)-a_{n-d}\tilde q_{n-d}(z)\quad\textrm{for}\quad n\notin (d+1)\N_0+d\quad\text{and}\\
&r_{n+1}\tilde q_{n+1}(z)=z\tilde q_n(z)-a_{n-d}\tilde q_{n-d}(z)\quad\textrm{for}\quad n\in (d+1)\N_0+d.
\end{align*}
Assume now by induction in $j\in\N_0$ that all the zeros $(\xi_{k,n})_{k=1}^{\deg\tilde q_n}$ of $\tilde q_n$ are positive numbers for $j(d+1)\le n<(j+1)(d+1)$ and fulfil
\[ \xi_{k,n}<\xi_{k,n+1}<\xi_{k+1,j(d+1)}\quad\text{for}\quad j(d+1)\le n<j(d+1)+d\quad\textrm{and}\quad1\le k\le j. \]
(To simplify the notation, we set $\xi_{0,n}=0$ and $\xi_{\deg\tilde q_n+1,n}=\infty$ for all $n\in\N_0$.)

Then, for $0\le k\le j$,
\[ \sign(\tilde q_{(j+1)(d+1)}(\xi_{k,j(d+1)+d}))=-\sign(\tilde q_{(j+1)(d+1)}(\xi_{k+1,j(d+1)})), \]
and so we have $\xi_{k,n}<\xi_{k+1,(j+1)(d+1)}<\xi_{k+1,n}$ for all $0\le k\le j$ and $j(d+1)\le n<(j+1)(d+1)$.

Now we see recursively in $1\le r\le d$ that
\[ \sign(\tilde q_{(j+1)(d+1)+r}(\xi_{k,(j+1)(d+1)+r-1}))=-\sign(\tilde q_{(j+1)(d+1)+r}(\xi_{k,j(d+1)+r})),\quad 0\le k\le j, \]
verifying the induction hypothesis.
\end{proof}
Provided now that the error terms in recursion relation \eqref{e:recursionOnPol} can be neglected (which is what we expect), we can apply Lemma \ref{l:zeros} and see that in the limit $N\to\infty$ the zeros of the orthonormal polynomials will lie on the tree-like graph $\{z\in\C\;|\;z^{d+1}\in[0,\infty)\}$. As shown in Section \ref{s:zerosOgPol}, their density in the continuum limit can therefore be determined as the discontinuity of the Schwarz function of the polynomial curve parametrised by $h$. In the cases $d=1$, where $h$ parametrises an ellipse, and $d=2$, where $h$ parametrises a hypotrochoid, this can explicitly be done.
\begin{proposition}\label{p:schwarzFunctionEllipse}
The Schwarz function of the ellipse $\{(x,y)\in\R^2\;|\;\frac{x^2}{b_1^2}+\frac{y^2}{b_2^2}=1\}$ with half-axis $b_1,b_2>0$ is given by
\begin{equation}\label{e:schwarzFunctionEllipse}
S(z) = \frac{b_1^2+b_2^2}{b_1^2-b_2^2}\,z-\frac{2b_1b_2z}{b_1^2-b_2^2}\sqrt{1-\frac{b_1^2-b_2^2}{z^2}}.
\end{equation}
\end{proposition}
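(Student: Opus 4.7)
The plan is to exhibit the standard polynomial parametrisation of the ellipse, read off the Schwarz function from it via equation \eqref{e-reflection}, and simplify. Without loss of generality assume $b_1>b_2>0$ (the case $b_1=b_2$ gives the circle, for which Proposition \ref{p:singularitiesSchwarz} yields $S(z)=b_1^2/z$ directly, in agreement with the right-hand side of \eqref{e:schwarzFunctionEllipse} once one passes to the limit). Set $r=(b_1+b_2)/2$ and $s=(b_1-b_2)/2$, and define $h:\C^\times\to\C$ by $h(w)=rw+sw\I$. One checks immediately that $h(\e^{\i\theta})=b_1\cos\theta+\i b_2\sin\theta$, so $h$ restricted to the unit circle parametrises the ellipse, and by Proposition \ref{p:polCurve} it extends to a biholomorphism from $\C\setminus\bar B_1$ onto the exterior of the ellipse.

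Next, using \eqref{e-reflection}, the Schwarz function is $S(z)=\bar h(w\I)=rw\I+sw$, where $w=h\I(z)$ is the unique solution of $rw^2-zw+s=0$ lying in $\{|w|>1\}$. By Vieta, the two roots $w_\pm=(z\pm\sqrt{z^2-4rs})/(2r)$ satisfy $w_+w_-=s/r$; since $s/r<1$ this is automatically the one with the plus sign (with the principal branch of the square root chosen so that $\sqrt{z^2-4rs}\sim z$ at infinity). Compute
\[
S(z)=rw_+\I+sw_+=\frac{r^2+s^2}{2rs}\,z+\frac{s^2-r^2}{2rs}\,\sqrt{z^2-4rs},
\]
by rationalising $rw_+\I$ (using $w_+w_-=s/r$ so that $w_+\I=rw_-/s$).

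Finally I substitute $r^2+s^2=(b_1^2+b_2^2)/2$, $rs=(b_1^2-b_2^2)/4$, $r^2-s^2=b_1b_2$, and $4rs=b_1^2-b_2^2$. The coefficient of $z$ becomes $(b_1^2+b_2^2)/(b_1^2-b_2^2)$, the prefactor of the square root becomes $-2b_1b_2/(b_1^2-b_2^2)$, and $\sqrt{z^2-4rs}=z\sqrt{1-(b_1^2-b_2^2)/z^2}$ with the branch normalised to $1$ at infinity, yielding exactly \eqref{e:schwarzFunctionEllipse}. As a consistency check, the Laurent expansion of this $S$ at infinity begins $\frac{b_1-b_2}{b_1+b_2}z+\frac{b_1b_2}z+\Ord(z^{-3})$, which matches Proposition \ref{p:expansion of schwarz function} with $t_2=s/(2r)$, $t_0=r^2-s^2=b_1b_2$ and all other exterior moments zero, confirming that the branch selection is correct. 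No genuine obstacle arises: the only point to handle with care is the choice of branch for the square root, which is dictated by the requirement that $h\I(z)$ be the root lying outside the unit disc.
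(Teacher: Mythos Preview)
Your proof is correct, but it follows a different path from the paper's. The paper proceeds directly from the definition of the Schwarz function: on the ellipse, $\bar z=S(z)$, so substituting $x=(z+S(z))/2$ and $y=(z-S(z))/(2\i)$ into $x^2/b_1^2+y^2/b_2^2=1$ yields the quadratic
\[
\frac{(z+S(z))^2}{b_1^2}-\frac{(z-S(z))^2}{b_2^2}=4,
\]
which one solves for $S(z)$ and fixes the sign by $S(b_1)=b_1$. That is the entire argument.

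Your route instead invokes the polynomial parametrisation $h(w)=rw+sw\I$ and the general formula \eqref{e-reflection}, solving a quadratic in $w=h\I(z)$ rather than in $S$. This is slightly longer but has the virtue of staying within the machinery the paper has already built: the biholomorphicity of $h$ on the exterior (Proposition \ref{p:polCurve}), the expression \eqref{e-reflection} for $S$, and the Laurent expansion of Proposition \ref{p:expansion of schwarz function} as a branch check. The paper's approach is quicker and self-contained for this single example; yours illustrates how the general framework specialises, and the consistency check against the harmonic moments $t_2=s/(2r)$, $t_0=b_1b_2$ is a nice touch that the paper's proof does not supply.
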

\begin{proof}
Solving the quadratic equation
\[ \frac{(z+S(z))^2}{b_1^2}-\frac{(z-S(z))^2}{b_2^2}=4 \]
for $S(z)$ and choosing the sign such that $S(b_1)=b_1$, leads us immediately to equation \eqref{e:schwarzFunctionEllipse}.
\end{proof}

\begin{proposition}
Let $\vec\xi_{n,N}$ be the zeros of the orthonormal polynomial $q_{n,N}$ for the potential 
\[ V(z)=\frac1{t_0}(|z|^2-2t_2(z^2+\bar z^2)),\quad 0<t_2<\frac12. \]
Then, for $t_0>0$ small enough, each converging subsequence of the sequence $(\delta_{\vec\xi_{n,N}})$ tends for $N\to\infty$ and $\frac nN\to x$ to the measure $\sigma$ on $\R$ with $\d\sigma(y)=\rho(y)\d y$ and 
\begin{equation}\label{e:densityEllipse}
\rho(y)=\frac2{(b_1^2-b_2^2)\pi}\sqrt{b_1^2-b_2^2-y^2}\;\chi_{[-\sqrt{b_1^2-b_2^2},\sqrt{b_1^2-b_2^2}]}(y).
\end{equation}
Here $b_1$ and $b_2$ denote the half-axis of the ellipse with vanishing first harmonic moment, second harmonic moment $t_2$, and area $\pi xt_0$. 
\end{proposition}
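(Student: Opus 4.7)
The plan is to invoke the general zero-density theorem of subsection 5.1, which asserts $\d\sigma(y)=\delta S_x(y)\d y/(2\pi\i xt_0)$, and then to reduce to an explicit Schwarz-function computation via Proposition \ref{p:schwarzFunctionEllipse}. That theorem needs as an input that every accumulation point $\sigma$ of $(\delta_{\vec\xi_{n,N}})$ is supported on a tree-like graph of smooth one-dimensional arcs. In the $d=1$ case this is essentially free: as noted just after Proposition \ref{p:recursionOnPol}, one may take $D=\C$ and all error terms in the three-term recursion vanish, so that $zq_{n,N}=r_{n+1,N}q_{n+1,N}+2t_2 r_{n,N}q_{n-1,N}$ with $r_{n,N}=\sqrt{t_0 n/(N(1-4t_2^2))}$. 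Lemma \ref{l:zeros} (with $d=1$) then forces every zero of $q_{n,N}$ to be real and symmetric about the origin, so $\supp\sigma\subset\R$, which is the simplest possible tree.

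With the hypothesis verified, I would identify the relevant polynomial curve: by Proposition \ref{p:equilibriumMeasureEllipse} (with $xt_0$ replacing $t_0$) the curve with vanishing $t_1$, harmonic moment $t_2$ and area $\pi xt_0$ is the ellipse with half-axes $b_1,b_2$ satisfying $b_1b_2=xt_0$ and $b_1^2-b_2^2=8t_2xt_0/(1-4t_2^2)$. Proposition \ref{p:schwarzFunctionEllipse} then supplies
\[ S_x(z)=\frac{b_1^2+b_2^2}{b_1^2-b_2^2}\,z-\frac{2b_1b_2 z}{b_1^2-b_2^2}\sqrt{1-\frac{b_1^2-b_2^2}{z^2}}, \]
whose only branch cut is the real segment $[-\sqrt{b_1^2-b_2^2},\sqrt{b_1^2-b_2^2}]$ joining the foci. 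Along this segment $\delta S_x$ is purely imaginary, so condition \eqref{e:branchCuts} is automatically satisfied by the natural real parametrisation $\alpha(t)=t$; together with the tree-like support already established, this identifies the segment as exactly the one predicted by the general theorem.

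To finish I would compute the boundary values of the square root: for $y$ in the interior of the cut one has $\sqrt{1-(b_1^2-b_2^2)/(y\pm\i 0)^2}=\pm\i\sqrt{b_1^2-b_2^2-y^2}/y$, so that $\delta S_x(y)=-4\i b_1b_2\sqrt{b_1^2-b_2^2-y^2}/(b_1^2-b_2^2)$. Dividing by $2\pi\i xt_0=2\pi\i b_1b_2$ reproduces the density \eqref{e:densityEllipse}; the rescaled semicircle integrates to one, which serves as a consistency check on the prefactor.

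The main potential pitfall is bookkeeping around the sign of $\delta S_x$: formula \eqref{e:densityZeros} depends on an orientation convention for the contour used in its derivation, and one must propagate that convention carefully so that $\rho$ comes out non-negative. This is a matter of consistency rather than depth, and once the orientation matches that used in Lemma \ref{l-gradient} and Proposition \ref{p:generatingFunction} everything falls into place; no additional analysis beyond the explicit computation of the square-root jump is needed.
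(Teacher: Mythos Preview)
Your proposal is correct and follows essentially the same route as the paper's proof: invoke Proposition~\ref{p:recursionOnPol} and Lemma~\ref{l:zeros} (with $D=\C$ so the error terms vanish) to place all zeros on the real axis, then apply the general zero-density theorem of \S5.1 together with the explicit Schwarz function from Proposition~\ref{p:schwarzFunctionEllipse}, identifying the branch cut as the focal segment where $\delta S_x$ is purely imaginary. Your version merely supplies more of the intermediate arithmetic (the explicit jump computation and the orientation discussion) than the paper's terse argument.
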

\begin{proof}
Proposition \ref{p:recursionOnPol} together with Lemma \ref{l:zeros} tells us that the zeros all lie on the real axis. The density $\rho$ is therefore given by 
\[ \rho = \frac{\delta S}{2\pi\i xt_0}, \]
where $\delta S$ is the discontinuity of the Schwarz function $S$ of the ellipse, which was calculated in Proposition \ref{p:schwarzFunctionEllipse}. Here the branch cut of $S$ has to be chosen on the part of the real axis where $\delta S$ is purely imaginary, that is on the interval $[-\sqrt{b_1^2-b_2^2},\sqrt{b_1^2-b_2^2}]$ between the two foci of the ellipse. This directly brings us to equation \eqref{e:densityEllipse}.
\end{proof}

\begin{proposition}\label{p:cubicS}\mbox{}
\begin{enumerate}
\item
The function $h(w)=rw+aw^{-2}$, $w\in S^1$, is for all $r>0$ and $a\in\R$ with $2|a|<r$ a parametrisation of a polynomial curve, the hypotrochoid.
\item
The Schwarz function $S$ of this hypotrochoid is given by
\[ S(3rz) = \left((a^2-r^2)\,\omega\left({\textstyle\frac{4rz^3}a}\right)^2+(2a^2+r^2)\,\omega\left({\textstyle\frac{4rz^3}a}\right)+a^2+2r^2\right)\frac{z^2}a, \]
where
\[ \omega(y)=\sqrt[3]{1-\frac2y\left(1+\sqrt{1-y}\right)}+\sqrt[3]{1-\frac2y\left(1-\sqrt{1-y}\right)} \]
and the branch cuts of the roots are chosen on the negative real axis.
\end{enumerate}
\end{proposition}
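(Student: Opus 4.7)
The plan is to dispatch (i) by verifying the hypotheses of Proposition \ref{p:polCurve}. Computing $h'(w)=r-2aw^{-3}$, its zeros satisfy $|w|^3 = 2|a|/r < 1$, so all lie strictly inside the unit disc and $h$ is an immersion on $\C\setminus\bar B_1$. For injectivity on $S^1$ I would factor
\[
h(w)-h(w')=(w-w')\left(r-\frac{a(w+w')}{w^2{w'}^2}\right),
\]
and note that on $|w|=|w'|=1$ the second factor has modulus at least $r-2|a|>0$. Since the image is closed (continuous image of $S^1$), smooth (polynomial), and simple, $h$ parametrises a polynomial curve of degree $2$ in the sense of the definition.

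For (ii) I would exploit that the Schwarz function satisfies $S(h(w)) = \bar h(w\I) = rw\I+aw^2$, so for $Z\in h(\C\setminus\bar B_R)$ the function $S(Z)$ is obtained by choosing the root $w$ of
\[
rw^3-Zw^2+a=0
\]
which tends to infinity as $Z\to\infty$, and evaluating $rw\I+aw^2$ on it. The substitution $Z=3rz$ followed by the Tschirnhaus shift $w=u+z$ turns this into the depressed cubic $u^3-3z^2u=2z^3-a/r$. Rescaling $u=zv$ and setting $y=4rz^3/a$ produces
\[
v^3-3v=2-\frac{4}{y},
\]
which no longer depends on the parameters $(r,a)$.

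The next step is to check that $\omega$ as stated solves this equation. Writing $A=1-\frac{2}{y}(1+\sqrt{1-y})$ and $B=1-\frac{2}{y}(1-\sqrt{1-y})$, a direct computation gives $A+B=2-4/y$ and $AB=(1-2/y)^2-(2/y)^2(1-y)=1$. Hence, with the cube roots chosen so that $\sqrt[3]{A}\sqrt[3]{B}=1$, I have
\[
\omega^3=A+B+3\sqrt[3]{AB}\,\bigl(\sqrt[3]{A}+\sqrt[3]{B}\bigr)=\bigl(2-\tfrac{4}{y}\bigr)+3\omega,
\]
so $\omega$ satisfies the reduced cubic and $w=z(\omega+1)$ is a root of the original one. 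The branch prescription (cuts on the negative real axis) is designed so that, as $y\to\infty$ (i.e.~$Z\to\infty$), both radicals tend to $1$, giving $\omega\to 2$ and $w\to 3z=Z/r$, which is the unique branch that yields the Schwarz function on the exterior component.

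Finally, substituting $w=z(\omega+1)$ into $S(3rz)=r/w+aw^2$, multiplying by $a/z^2$ gives $4r^2/(y(\omega+1))+a^2(\omega+1)^2$. Using $\omega^3-3\omega-2=-4/y$ together with the factorisation $\omega^3-3\omega-2=(\omega-2)(\omega+1)^2$ yields the identity
\[
\frac{4}{y(\omega+1)}=(2-\omega)(\omega+1),
\]
and expanding the two summands in powers of $\omega$ reproduces exactly the coefficients $(a^2-r^2),\,(2a^2+r^2),\,(a^2+2r^2)$ in the statement. I expect the main obstacle to be bookkeeping of the branches: verifying that the prescribed cuts for $\sqrt{1-y}$ and for the two cube roots consistently select the exterior root of the cubic throughout $h(\C\setminus\bar B_R)$, and match up at the discriminant locus $y=1$, which corresponds precisely to the critical values $h(w)$ with $h'(w)=0$ where the three roots collide.
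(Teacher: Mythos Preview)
Your proposal is correct and follows essentially the paper's route: both reduce $h(w)=Z$ to the depressed cubic via the Tschirnhaus shift, identify its Cardano solution with $\omega$, and substitute $w=z(\omega+1)$ back into $S=r/w+aw^2$. The only cosmetic differences are that the paper simplifies the last step by first using the cubic relation to rewrite $r/w+aw^2=\frac{a^2-r^2}{a}w^2+\frac{rZ}{a}w$ (rather than your factorisation $(\omega-2)(\omega+1)^2=-4/y$), and that your explicit injectivity check on $S^1$ for part (i) is actually more careful than the paper's one-line appeal to $h'\ne0$ on the exterior.
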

\begin{proof}
Because the derivative of $h$ vanishes only for those $w$ with $rw^3=2a$, which for $2|a|<r$ all lie inside the unit disc, $h$ is a biholomorphic map from the exterior of the unit disc and is therefore a parametrisation of a polynomial curve.

Solving now $h(w)=z$ for $w$ leads us to the cubic equation
\[ \tilde w^3-3p\tilde w-2q=0,\quad p=\left(\frac z{3r}\right)^2,\quad q=\left(\frac z{3r}\right)^3-\frac a{2r}, \]
for $\tilde w=w-\frac z{3r}$, whose solution is of the form
\[ \tilde w=\sqrt[3]{q+\sqrt{q^2-p^3}}+\sqrt[3]{q-\sqrt{q^2-p^3}}. \]
Choosing the branch cuts of the cubic root such that the solution is continuous outside the unit disc, gives us the inverse function for $h$:
\[ h\I(z)=\left(1+\omega\left({\textstyle\frac{4r}a\left(\frac z{3r}\right)^3}\right)\right)\frac z{3r}. \]
So the Schwarz function $S$ is given by
\begin{align*}
S(z)&=\frac r{h\I(z)}+ah\I(z)^2=\frac{a^2-r^2}ah\I(z)^2+\frac{rz}ah\I(z) \\
&=\left[(a^2-r^2)\,\omega\left({\textstyle\frac{4r}a\left(\frac z{3r}\right)^3}\right)^2+(2a^2+r^2)\,\omega\left({\textstyle\frac{4r}a\left(\frac z{3r}\right)^3}\right)+a^2+2r^2\right]\frac1a\left(\frac z{3r}\right)^2.\qedhere
\end{align*}
\end{proof}

\begin{figure}
\includegraphics[width=0.75\textwidth]{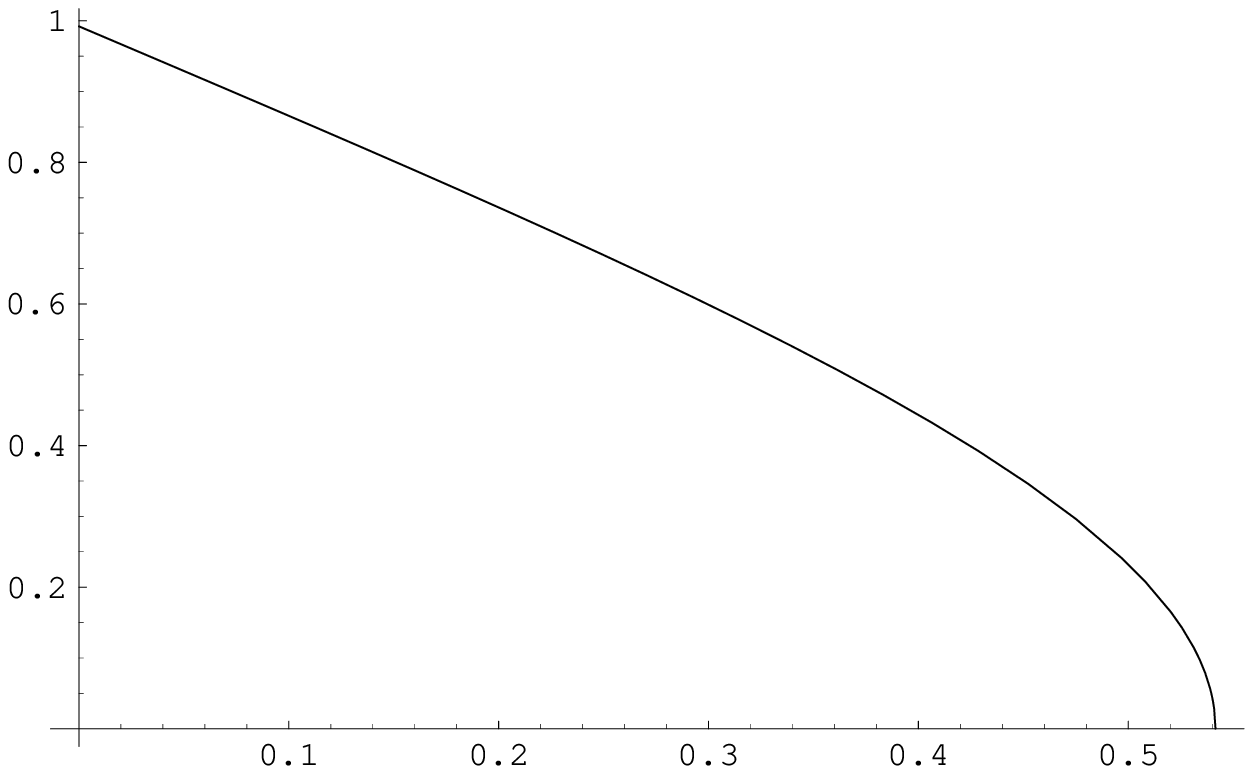}
\caption{Graph of $\sigma(z) = \rho(\frac{4r}a(\frac z{3r})^3)\frac1a(\frac z{3r})^2$ for $z\in[0,(\frac{27}4ar^2)^{\frac13}]$ in the case $t_0=\frac14$, $t_3=\frac19$}
\end{figure}

\begin{proposition}
Let $\vec\xi_{n,N}$ be the zeros of the orthonormal polynomial $q_{n,N}$ for the potential 
\[ V(z)=\frac1{t_0}(|z|^2-3t_3(z^3+\bar z^3)),\quad t_3>0. \]
Then, provided the error terms in \eqref{e:recursionOnPol} are neglectable, for $t_0>0$ small enough, each converging subsequence of the sequence $(\delta_{\vec\xi_{n,N}})$ tends for $N\to\infty$ and $\frac nN\to x$ to the measure $\sigma$,
\begin{equation}\label{e:cubicDensityZeros}
\d\sigma = \rho\left(\textstyle{\frac{4r}a\left(\frac z{3r}\right)^3}\right)\frac1a\left(\frac z{3r}\right)^2\d z,
\end{equation}
where 
\begin{equation}\label{e:coeffPolCurve}
r=\frac1{6t_3}\sqrt{1-\sqrt{1-72xt_0t_3^2}},\quad a=\frac1{12t_3}\left(1-\sqrt{1-72xt_0t_3^2}\right),
\end{equation}
$\d z$ denotes the infinitesimal line element on the one-dimensional manifold $\{z\in\C\;|\;z^3\in(0,\frac{27}4ar^2)\}$, 
\[ \rho(y)=\left((\omega_+(y)^2-\omega_-(y)^2)(r^2-a^2)-(\omega_+(y)-\omega_-(y))(r^2+2a^2)\right)\frac{\sqrt3}{2\pi(r^2-2a^2)}, \]
and
\[ \omega_\pm(y)=\sqrt[3]{\frac2y\left(1\pm\sqrt{1-y}\right)-1}. \]
\end{proposition}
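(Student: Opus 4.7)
The strategy is to combine the two developments of this chapter: first invoke Proposition~\ref{p:recursionOnPol} together with Lemma~\ref{l:zeros} to localise the zeros on a tree-like graph, and then apply the density formula \eqref{e:densityZeros} with the explicit Schwarz function from Proposition~\ref{p:cubicS} to obtain $\rho$. I begin by identifying $r$ and $a$. The polynomial curve with sole non-vanishing harmonic moment $t_3$ and encircled area $\pi xt_0$ is parametrised by $h(w)=rw+aw^{-2}$ with $a=3t_3r^2$, and the area relation \eqref{e:correspondingPolCurve1} becomes the quadratic $18t_3^2r^4-r^2+xt_0=0$. Selecting the smallest positive root yields exactly \eqref{e:coeffPolCurve}; for $t_0$ small enough, $72xt_0t_3^2<1$, so that $r$ and $a$ are real and $2|a|<r$, which by Proposition~\ref{p:cubicS}(i) guarantees that $h$ does parametrise a polynomial curve.

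For the combinatorial localisation of the zeros, neglecting the exponentially small error terms in \eqref{e:recursionOnPol} reduces the recursion to $zq_{n,N}=r_{n+1,N}q_{n+1,N}+a_{n-2,N}q_{n-2,N}$ with positive coefficients, as guaranteed by \eqref{e:recursionRelationCoefficients}. Lemma~\ref{l:zeros} with $d=2$ then implies that the zeros of $q_{n,N}$ lie on the three rays $\{z\in\C\,:\,z^3\in[0,\infty)\}$. Hence every weak limit $\sigma$ of $(\delta_{\vec\xi_{n,N}})$ is supported on this tree-like graph, and the theorem of Section~\ref{s:zerosOgPol} gives $\d\sigma(z)=\rho_x(z)\d z$ with $\rho_x(z)=\delta S_x(z)/(2\pi\i xt_0)$ along the arms, where $S_x$ is the Schwarz function of the hypotrochoid determined above.

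To compute $\delta S_x$, I would use Proposition~\ref{p:cubicS}: $S_x(3rz)$ is a quadratic polynomial in $\omega(y)$ with $y=\frac{4r}{a}\left(\frac{z}{3r}\right)^3$, and $\omega(y)=\sqrt[3]{A(y)}+\sqrt[3]{B(y)}$ for $A,B=1-\frac{2}{y}(1\pm\sqrt{1-y})$. For $y\in(0,1)$ a short computation shows $A(y),B(y)<0$, so that both cubic roots hit the prescribed branch cut; in the notation of the statement, $A=-\omega_+^3$ and $B=-\omega_-^3$. Computing $\partial_yA$ and $\partial_yB$ one checks that they have opposite signs on $(0,1)$, so that the substitution $y\mapsto y\pm\i 0$ translates into $A\mapsto A\pm\i 0$ and $B\mapsto B\mp\i 0$; with the standard identity $\sqrt[3]{-t\pm\i 0}=t^{1/3}\e^{\pm\i\pi/3}$ for $t>0$ this yields
\[ \omega(y\pm\i 0)=\omega_+\e^{\pm\i\pi/3}+\omega_-\e^{\mp\i\pi/3}, \]
whence $\delta\omega(y)=\i\sqrt{3}(\omega_+-\omega_-)$ and $\delta(\omega^2)(y)=\i\sqrt{3}(\omega_+^2-\omega_-^2)$. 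Substituting into Proposition~\ref{p:cubicS} and dividing by $2\pi\i xt_0$ produces \eqref{e:cubicDensityZeros}. For $y>1$ the two cubic arguments become complex conjugates and $\omega$ stays continuous, so the branch cut is confined to $y\in(0,1)$, which translates into the segment $z^3\in(0,\frac{27}{4}ar^2)$ on each ray.

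The main obstacle is the careful branch-tracking: one must verify that only $y\in(0,1)$ contributes a branch cut, that the chosen branches of the two cube roots remain consistent with the recipe of Proposition~\ref{p:cubicS} as $y$ moves across the cut, and that the resulting $\rho$ is positive along the segment, so that the a priori condition \eqref{e:branchCuts} of the theorem in Section~\ref{s:zerosOgPol} is indeed satisfied. The other delicate point, reflected in the hedge \emph{provided the error terms are neglectable}, is that Lemma~\ref{l:zeros} requires a strictly three-term recursion, whereas \eqref{e:recursionOnPol} only holds up to exponentially small corrections; controlling the perturbation of the zero locations under these corrections is precisely the content of the preceding conjecture.
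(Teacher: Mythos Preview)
Your proposal is correct and follows the same route as the paper: solve the biquadratic \eqref{e:correspondingPolCurve1} for $r$ and $a$, invoke Proposition~\ref{p:recursionOnPol} with Lemma~\ref{l:zeros} to place the zeros on the three rays, and then read off the density from \eqref{e:densityZeros} using the explicit Schwarz function of Proposition~\ref{p:cubicS}. The only difference is that you spell out the branch-cut analysis (the signs of $\partial_yA$, $\partial_yB$ and the resulting jumps $\delta\omega$, $\delta(\omega^2)$) in detail, whereas the paper simply records the discontinuity locus $y\in(0,1)$ and asserts that plugging into \eqref{e:densityZeros} gives \eqref{e:cubicDensityZeros}.
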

\begin{proof}
For small $t_0$, the support of the equilibrium measure for the potential $V$ is given by the polynomial curve $\gamma$ defined by the parametrisation \eqref{e:correspondingPolCurve} with $d=2$. Solving now the biquadratic equation \eqref{e:correspondingPolCurve1} for $r$, we find, substituting $xt_0$ for $t_0$, the relations \eqref{e:coeffPolCurve}.

Proposition \ref{p:cubicS} provides us now with the Schwarz function $S$ for the curve $\gamma$. The function $S$ is discontinuous in those $z\in\C$, where 
\[ 1-\frac2y(1\pm\sqrt{1-y})<0,\quad y=\frac{4r}a\left(\frac z{3r}\right)^3, \]
that is for $y\in(0,1)$ resp. $z^3\in(0,\frac{27}4ar^2)$.

Because we know that all zeros of the orthonormal polynomials $q_{n,N}$ lie on the set $\{z\in\C\;|\;z^3\in[0,\infty)\}$, the limiting measure $\sigma$ is given through equation \eqref{e:densityZeros} leading us to formula \eqref{e:cubicDensityZeros}.
\end{proof}





\end{document}